\newtheorem{theorem}{Theorem}
\newtheorem{lemma}{Lemma}
\newtheorem{remark}{Remark}
\newtheorem{proposition}{Proposition}
\newtheorem*{acknowledgement}[theorem]{Acknowledgement}
\newenvironment{proof}[1][Proof]{\noindent\textbf{#1.} }{\ \rule{0.5em}{0.5em}}
\begin{document}
\title{\textbf{Linear-Quadratic Mean Field Games of Controls with Non-Monotone Data}}
\author{Min Li\footnote{School of Mathematics, Shandong
University, Jinan 250100, China; Email: {\tt liminmath@outlook.com}},\quad Chenchen Mou\footnote{Corresponding author. Department of Mathematics, City University of Hong Kong, Hong Kong, China; Email: {\tt chencmou@cityu.edu.hk}}, ~~Zhen Wu\footnote{ School of Mathematics, Shandong University, Jinan 250100, China; Email: {\tt wuzhen@sdu.edu.cn}}\quad and Chao Zhou\footnote{Department of Mathematics and Risk Management Institute, National University of Singapore, Singapore; Email: {\tt matzc@nus.edu.sg}}}
\date{}
\maketitle

\begin{abstract}
In this paper, we study a class of linear-quadratic (LQ) mean field games of controls with common noises and their corresponding $N$-player games. The theory of mean field game of controls considers a class of mean field games where the interaction is via the joint law of both the state and control.  By the stochastic maximum principle, we first analyze the limiting behavior of the representative player and obtain his/her optimal control in a feedback form with the given distributional flow of the population and its control. The mean field equilibrium is determined by the Nash certainty equivalence (NCE) system.  Thanks to the common noise, we do not require any monotonicity conditions for the solvability of the NCE system. We also study the master equation arising from the LQ mean field game of controls, which is a finite-dimensional second-order parabolic equation. It can be shown that the master equation admits a unique classical solution over an arbitrary time horizon without any monotonicity conditions. Beyond that, we can solve the $N$-player game directly by further assuming the non-degeneracy of the  idiosyncratic noises. As byproducts, we prove the quantitative convergence results from the $N$-player game to the mean field game and the propagation of chaos property for the related optimal trajectories.  \end{abstract}

\textbf{2020 AMS Mathematics subject classification:} 49N80, 60H30, 91A16, 93E20.

\textbf{Keywords:}  Mean field game of controls, $N$-player game of controls, Master equation, Forward-backward stochastic differential equation, Nash certainty equivalence system, Propagation of chaos

\section{Introduction}
 The theory of mean field games is devoted to studying strategy decision making in large populations where the individuals interact through certain mean field quantities. The introduction to mean field games were the pioneering works of Lasry-Lions \cite{LasryLions2007} and Huang-Malham\'e-Caines \cite{Huang2006}. See, e.g. Lions \cite{Lions2007}, Cardaliaguet \cite{Cardaliaguet2010}, Bensoussan-Frehse-Yam \cite{BensoussanFrehseYam2013}, Carmona-Delarue \cite{CarmonaDelarue20181,CarmonaDelarue20182}, Cardaliaguet-Porretta \cite{CardaliaguetPorretta2020} and the reference therein for more details on profound theoretical results and the broad applications of mean field games. Generally, mean field games are to solve their associated $N$-player games, of which the dimension increases as the number of players increases. Therefore, it is really difficult to numerically solve the $N$-player Nash equilibria directly via its optimality conditions. On the contrary, the mean field game approach provides an effective and promising solution since its dimension is fixed.  
 
Mean field games of controls are a class of mean field games where the interaction is via the joint law of both the state and control, which are called the extended mean field games at the early stage. See, e.g. Gomes-Voskanyan \cite{GomesVoskanyan2016}, Gomes-Patrizi-Voskanyan\cite{GomesPatriziVoskanyan2014}, Carmona-Lacker \cite{CarmonaLacker2015}, Carmona-Delarue \cite{CarmonaDelarue20181},  Cardaliaguet-Lehalle \cite{CardaliaguetLehalle2018}, Achdou-Kobeiss\cite{AchdouKobeiss2021},  Kobeissi \cite{Kobeissi2021,Kobeissi2020},  Bonnans-Hadikhanloo-Pfeiffer \cite{BonnansHadikhanlooPfeiffer2021}, Graber-Mayorga \cite{GraberMayorga}.

The master equation was introduced by Lions \cite{Lions2007} in his lectures on the mean field game and its applications at \emph{Coll\`ege de France}, which is an infinite-dimensional partial differential equation to characterize the value of the mean field game. The well-posedness of the master equation has always been one of great focuses in the mean field game community. When the data are sufficiently smooth, the master equation usually admits a classical solution on a small time horizon, see, e.g. Gangbo-\'Swi\c{e}ch \cite{GangboSwiech2015}, Carmona-Delarue \cite{CarmonaDelarue20182},  Bensoussan-Yam \cite{BensoussanYam2019},  Cardaliaguet-Cirant-Porretta\cite{CardaliaguetCirantPorretta2020}. By contrast, the global well-posedness of the classical solutions to the master equation is more challenging, see, e.g.
Buckdahn-Li-Peng-Rainer \cite{BuckdahnLiPengRainer2017},   Chassagneux-Crisan-Delarue \cite{ChassagneuxCrisanDelarue2014}, Cardaliaguet-Delarue-Lasry-Lions \cite{CardaliaguetDelarueLasryLions2019}, Carmona-Delarue \cite{CarmonaDelarue20182}, Bayraktar-Cohen \cite{BayraktarCohen2018}, Gangbo-M\'esz\'aros \cite{GangboMeszaros2020}, Bensoussan-Graber-Yam \cite{BensoussanGraberYam2019,BensoussanGraberYam2020},  Bertucci-Lasry-Lions \cite{BertucciLasryLions2019}, Gangbo-M\'esz\'aros-Mou-Zhang \cite{GangboMou2021}, Mou-Zhang \cite{MouZhang2022}. There are also some studies on the global well-posedness of the weak solutions to the master equation, see e.g. Mou-Zhang \cite{MouZhang2019}, Bertucci \cite{Bertucci2020,Bertucci2021}, Cardaliaguet-Souganidis \cite{CardaliaguetSouganidis2021}, Cecchin-Delarue \cite{CecchinDelarue20221}.

In the literature, the monotonicity condition plays a key role in the global well-posedness of the master equation as well as the uniqueness of mean field equilibria and solutions to mean field game systems.  For example, \cite{ChassagneuxCrisanDelarue2014,CardaliaguetDelarueLasryLions2019,CarmonaDelarue20182,BayraktarCohen2018,BertucciLasryLions2019, MouZhang2019} are based on the well-known Lasry-Lions monotonicity condition. Another monotonicity condition is called the displacement monotonicity condition, see, e.g. \cite{BensoussanGraberYam2019, BensoussanGraberYam2020,GangboMeszaros2020,GangboMou2021}. Noting that both monotonicity conditions are always taken in a fixed direction. \cite{MouZhang2022} proposed a new type of monotonicity condition in the opposite direction, which is called the anti-monotonicity condition. For mean field games without any monotonicity conditions, they may have multiple equilibria, see e.g. Tchuendom \cite{Tchuendom2018}, Bardi-Fischer \cite{BardiFischer2019}, Delarue-Tchuendom \cite{DelarueTchuendom2020}, Bayraktar-Zhang \cite{BayraktarZhang2020}, Cecchin-Delarue\cite{CecchinDelarue2022} for the case of nonlocal coupling; Cirant\cite{Cirant2019}, Cirant-Verzini \cite{CirantVerzini2017}, Cirant-Tonon\cite{CirantTonon2019} for the case of local coupling. Recently, Cecchin-Delarue \cite{CecchinDelarue20221} studies the master equations for potential mean field games with non-monotone data.


One of the main goals of our paper is to study a class of LQ mean field games of controls with common noises and their corresponding master equations. In particular, we do not require any monotonicity conditions to establish the global well-posedness theory. 
In the literature, the master equation usually includes a probability measure as
one of the state variables thus becomes a parabolic equation on the Wasserstein space, which is an infinite-dimensional PDE. In this manuscript, we consider a class of LQ mean field games of controls where the mean field interaction is only through the expectations of the state and control. Therefore, the master equation reduces to be a finite dimensional second-order parabolic equation. The first main result of our manuscript is the global well-posedness of classical solutions to the master equation. To our best knowledge, this is the first global well-posedness result for the master equation in the literature for mean field games of controls. There is a recent progress on (infinite dimensional) master equations for mean field games of controls in \cite{MouZhang20221}, where the propagation of monotonicities along classical solutions of master equations has been proved. To establish the well-posedness result, we follow the following steps. We first use the stochastic maximum principle to obtain the optimal control in a feedback form with the given joint expectation of the state and the control. Then we characterize the mean field equilibrium by the so-called Nash certainty equivalence (NCE) principle, which was introduced in Huang-Malham\'e-Caines \cite{Huang2007}. 
The construction of the NCE system is slightly more difficult than that in the case of the standard mean field game because of the presence of the expectation of the control. We overcome the difficulty by establishing the connection between the expectation of the control and the expectation of the state using the inverse function theorem. We then follow the idea in Tchuendom \cite{Tchuendom2018} and Delarue-Tchuendom \cite{DelarueTchuendom2020} to show the global well-posedness of the NCE system through the non-degeneracy of the common noise without any monotonicity conditions. This is the main reason that we can get rid of any monotonicity conditions.  With the help of the NCE system and the Riccati equation, we are able to show the global well-posedness of the stochastic Hamiltonian system, i.e. the forward-backward stochastic differential equation (FBSDE) derived from the stochastic maximum principle. We can further show that the decoupling field of the stochastic Hamiltonian system has nice regularity property to solve the vectorial master equation. Finally, we use the solution to the vectorial master equation to establish the existence of a solution to the master equation. The uniqueness of solutions to the master equation is a byproduct of the global well-posedness of the stochastic Hamiltonian system.

As an important application of our well-posedness results, we prove the convergence of the $N$-player games and the related propagation of chaos property. There is a broad literature on the convergence from the $N$-player game to the mean field game. See, e.g. \cite{CardaliaguetDelarueLasryLions2019,CarmonaDelarue20182,DelarueTchuendom2020,MouZhang2019}, Delarue-Lacker-Ramanan\cite{DelarueLackerRamanan2019,DelarueLackerRamanan2020}, Lacker \cite{Lacker2016,Lacker2017,Lacker2018,Lacker2020}, Nutz-San Martin-Tan \cite{NutzSanMartinTan2020}, Iseri-Zhang \cite{IseriZhang2021}, Lacker-Flem \cite{Lacker2021}, Djete \cite{Djete2021} for more details.   Unlike most of the existing literature, we do not consider the quantitative convergence problems for the closed-loop Nash equilibria by using the classical solution to the master equation. This is because it is a difficult task to show the global well-posedness of the Nash system, derived from the optimality condition for the closed-loop Nash equilibria, with quadratic growth data. In fact, to the best of our knowledge, such global well-posedness result remains open. Instead, we aim to show quantitative convergence results from the $N$-player game to the mean field game and the propagation of chaos property for the related optimal trajectories corresponding to the open-loop Nash equilibria. We emphasize that, unlike the mean field game, open-loop Nash equilibria are not closed-loop Nash equilibria although they are of closed-loop form.  The key to proving the convergence of the $N$-player games is to use the fact that the finite-dimensional projections of the decoupling field of the NCE system can approximate the decoupling field of the corresponding NCE system for the $N$-player game as $N$ goes to the infinity. We note that the well-posedness of the NCE system for the $N$-player game further needs the non-degeneracy of the idiosyncratic noises. We then follow \cite{CardaliaguetDelarueLasryLions2019} to show the remaining convergence results and the propagation of chaos property for optimal trajectories.

The rest of the paper is organized as follows.  In section 2, we give some necessary preliminaries and formulate our problem.
In section 3, we introduce a class of LQ mean field games of controls with common noises and establish the global well-posedness of the NCE system and the master equation with non-monotone data.  section 4 is devoted to solving the $N$-player games directly. In section 5, we show the convergence of the $N$-player games and the propagation of chaos property for optimal trajectories.

 \section{The setting}
\subsection{ LQ $N$-player game of controls}\label{section:LP}
 
 In this subsection, we consider a class of linear-quadratic (LQ) $N$-player game of controls. 
 
To reveal the nature of the problem, we let our state space be $\mathbb R$ throughout the paper. The results remain valid for higher dimensions. For a given $T>0$, let $(\widetilde\Omega,\widetilde{\mathcal{F}},\widetilde{\mathbb F},\widetilde{\mathbb{P}})$ be a complete filtered probability space on which can support $N+1$ independent one-dimensional Brownian motions: $W^i,~1\leq i\leq N,$ and $W^0$. Here $W^i$ denotes the  idiosyncratic noise for the $i$th player and $W^0$ denotes the common noise for all the players. Let $\widetilde{\mathbb F}:=\{\widetilde{\mathcal{F}}_t\}_{0\leq t\leq T}$ where $\widetilde{\mathcal{F}}_t:=\left(\vee_{i=1}^N \mathcal{F}^{W^i}_t\right)\vee \mathcal{F}_t^{W^0}\vee \widetilde{\mathcal{F}}_0$ and let $\widetilde{\mathbb P}$ has no atom in $\widetilde{\mathcal{F}}_0$. 

We then introduce the following spaces: for any sub-filtration $\mathbb G$ of $\mathbb F$, we denote $m\in L_{\mathbb G}^2([0,T];\mathbb{R})$ if $m: \Omega \times [0,T]\rightarrow \mathbb{R}$ is an $\mathbb G$-adapted process such that $\mathbb{E}\big[\int_0^T|m_t|^2dt\big]<\infty$; for any sub-$\sigma$-field $\mathcal{G}\subset\mathcal{F}$, we denote $\xi\in L^2_{\mathcal{G}}$ if $\xi:\Omega\rightarrow \mathbb{R}$ is a $\mathcal{G}$-measurable random variable such that $\mathbb{E}[|\xi|^2]<\infty$.
 

 For $1\leq i \leq N$, let $\xi^i\in L_{\widetilde{\mathcal{F}}_0}^2$ be independent and identically distribution (i.i.d.) random variables. Let us use $x^i$ and $\alpha^i$ to represent the state and the control processes of the $i$th player, respectively. Denote $\boldsymbol{x}:=(x^1,\ldots,x^N)$ and $\boldsymbol{\alpha}:=(\alpha^1,\ldots,\alpha^N)$. Suppose that the state $x^i$ of the $i$th player is given by
 \begin{equation}
 \left\{
 \begin{aligned}\label{state}
dx^i_t=&~[Ax^i_t+B\alpha^i_t+f(\nu^{N,i}_{\boldsymbol{x}_t})+b(\mu^{N,i}_{\boldsymbol{\alpha}_t})]dt+\sigma dW^i_t+\sigma_0dW^0_t,\\
 x^i_0=&~\xi^i,
 \end{aligned}
 \right.
 \end{equation}
where $\alpha^i\in \widetilde{\mathcal{U}}_{ad}(0,T)=\{\alpha\,|\,\alpha\in L^2_{\widetilde{\mathbb{F}}}([0,T];\mathbb{R}) \}$ and the interactions among players are via the average of all other players' states and controls
 \begin{equation}\label{eq:nuNimuNi}
 \mu^{N,i}_{\boldsymbol{\alpha}}:=\frac{1}{N-1}\underset{j\neq i}{\sum} \alpha^j, \quad  \nu^{N,i}_{\boldsymbol{x}}:=\frac{1}{N-1}\underset{j\neq i}{\sum} x^j.\end{equation}
 
 The cost functional of $i$th player is assumed to be 
 \begin{equation}\label{cost}
\mathcal{J}^i(\alpha^i,\boldsymbol{\alpha}^{-i}):=\frac{1}{2}\mathbb{E}\Big\{\int_0^T\Big[Q\left(x^i_t+l(\nu^{N,i}_{\boldsymbol{x}_t})\right)^2+R\left(\alpha^i_t+h(\mu^{N,i}_{\boldsymbol{\alpha}_t})\right)^2\Big]dt+G\left(x^i_T+g(\nu^{N,i}_{\boldsymbol{x}_T})\right)^2\Big\},
\end{equation}
where $\boldsymbol{\alpha}^{-i}=(\alpha^1,\ldots,\alpha^{i-1},\alpha^{i+1},\ldots,\alpha^N)$.

Then the major problem of the above $N$-player game is to find the Nash equilibrium (NE) $\boldsymbol{\alpha}^*$.\\
\textbf{Problem (NP)}
Find a strategy profile $\boldsymbol{\alpha}^*=(\alpha^{*,1},\ldots,\alpha^{*,N})$ where $\alpha^{*,i}\in \widetilde{\mathcal{U}}_{ad}(0,T)$, $1\leq i \leq N$, such that 
\begin{equation*}
\mathcal{J}^i(\alpha^{*,i},\boldsymbol{\alpha}^{*,-i})=\underset{\alpha^i\in \widetilde{\mathcal{U}}_{ad}(0,T)}{\inf}\mathcal{J}^i(\alpha^i,\boldsymbol{\alpha}^{*,-i}).
\end{equation*}

\subsection{LQ mean field game of controls
\label{sect-MFG}}
In this subsection, we introduce the following LQ mean field game of controls.  The introduction of such mean field game of controls is for the study of the above LQ $N$-player game of controls. We shall prove a quantitative convergence result via the master equation from the $N$-player game to the mean field game to show how well the mean field game can approximate the $N$-player game. Note that it is really hard to find Nash equilibria for the $N$-player games numerically since the dimension increases as the $N$ increases. However, their corresponding mean field games have a fixed dimension.

Let $(\Omega,\mathcal{F},\mathbb F,\mathbb P)$ be a complete filtered probability space on which can support two independent one-dimensional Brownian motions: $W$ and $W^0$. Here $W$ denotes the idiosyncratic noise and $W^0$ denotes the common noise.  We let $\mathbb F:=\{\mathcal{F}_t\}_{t\in[0,T]}$, where $\mathcal{F}_t:=\mathcal{F}^{W}_t\vee\mathcal{F}^{W^0}_t \vee\mathcal{F}_0$, and let $\mathbb P$ have no atom in $\mathcal{F}_0$ so it can support any measure on $\mathbb R$ with a finite second moment. We denote $\mathbb F^0:=\{\mathcal{F}_t^{W^0}\}_{t\in [0,T]}$.

 For any given $(\mu,\nu)\in L^2_{\mathbb F^0}([0,T];\mathbb R)\times L^2_{\mathbb F^0}([0,T];\mathbb R)$ and $\xi\in L_{\mathcal{F}_0}^2$ with $\nu_0=\mathbb E[\xi]$, the dynamics of the representative player is given by
  \begin{equation}\label{lstate}
 \left\{
 \begin{aligned}dx_t^{\xi,\alpha}=&~[Ax_t^{\xi,\alpha}+B\alpha_t+f(\nu_t)+b(\mu_t)]dt+\sigma dW_t+\sigma_0dW^0_t,\\
 x_0^{\xi,\alpha}=&~\xi,
 \end{aligned}
 \right.
 \end{equation}
 where the control process $\alpha\in \mathcal{U}_{ad}(0,T):=\{\alpha\,|\,\alpha\in L^2_{\mathbb{F}}([0,T];\mathbb{R})\}$, the constant coefficients $A,B\in\mathbb R,\sigma\geq 0,\sigma_0>0$ and the functions $f,b:\mathbb R\mapsto\mathbb R$ are bounded and uniformly Lipschitz continuous. The cost functional of the representative player is 
 \begin{equation}\label{lcost}
\mathcal{J}(\mu,\nu;\alpha):=\frac{1}{2}\mathbb{E}\Big\{\int_0^T\Big[Q\left(x_t^{\xi,\alpha}+l(\nu_t)\right)^2+R\left(\alpha_t+h(\mu_t)\right)^2\Big]dt+G\left(x_T^{\xi,\alpha}+g(\nu_T)\right)^2\Big\},
\end{equation}
where the constant coefficients $Q\geq 0, G\geq 0$ and $R>0$ and the functions $l,h,g:\mathbb R\mapsto\mathbb R$ are bounded and uniformly Lipschitz continuous.

We consider the following minimization problem:
\begin{equation}\label{minimization}
\mathcal{V}(\mu,\nu;\alpha^{*,\xi})=\underset{\alpha\in\mathcal{U}_{ad}(0,T)}{\inf} \mathcal{J(}\mu,\nu;\alpha).
\end{equation}

Then the central problem of the above mean field game of controls is to find the following mean field equilibrium (MFE) $\alpha^{*,\xi}$ and its corresponding stochastic measure flow $(\mu^{*,\xi},\nu^{*,\xi})$. \\
\textbf{Problem (MF)} Find $\alpha^{*,\xi}\in\mathcal{U}_{ad}(0,T)$ and $(\mu^{*,\xi},\nu^{*,\xi})\in L^2_{\mathbb F^0}([0,T];\mathbb R)\times L^2_{\mathbb F^0}([0,T];\mathbb R)$ such that 
\begin{equation*}
\mathcal{V}(\mu^{*,\xi},\nu^{*,\xi};\alpha^{*,\xi})=\underset{\alpha\in\mathcal{U}_{ad}(0,T)}{\inf}\mathcal{J}(\mu^{*,\xi},\nu^{*,\xi};\alpha),
\end{equation*}
where $(\mu^{*,\xi},\nu^{*,\xi})$ satisfies the consistency condition
\begin{equation}\label{eq:consistency}
\mu_t^{*,\xi}=\mathbb E[\alpha^{*,\xi}_t|\mathcal{F}_t^{W^0}]\quad\text{and}\quad \nu_t^{*,\xi}=\mathbb E[x_t^{\xi,\alpha^{*,\xi}}|\mathcal{F}_t^{W^0}].
\end{equation}


\subsection{Assumptions}

In the last subsection, we collect all the technical assumptions on data.

\textbf{Assumption (A)}\\
(i) Assume that $A,B,\sigma\geq 0, \sigma_0>0, Q\geq 0, G\geq 0, R>0$ are constant coefficients;\\
(ii) Assume that $f(\cdot),b(\cdot),l(\cdot),h(\cdot),g(\cdot):\mathbb R\to\mathbb R$ are bounded and uniformly Lipschitz continuous functions.

\textbf{Assumption (B)} There exists some $\varepsilon_0>0$ such that $|1+h^{\prime}(\cdot)|\geq\varepsilon_0$.

and 

\textbf{Assumption (C)} Assume that $f,b,l,h,g\in C^2(\mathbb R)$ are bounded functions, and moreover they have bounded 1st and 2nd order derivatives.

\section{Master equation}
\label{sect-Introduction}

\subsection{Stochastic maximum principle}
\label{subsect-SMP}

To solve the Problem (MF), we first solve the minimization problem \eqref{minimization} with the given stochastic measure flow $(\mu,\nu)\in L^2_{\mathbb F^0}([0,T];\mathbb R)\times L^2_{\mathbb F^0}([0,T];\mathbb R)$. 

Let us introduce the following stochastic Hamiltonian system: for any given $\xi\in L^2_{\mathcal{F}_0}$ and $(\mu,\nu)\in L_{\mathbb F^0}^2([0,T];\mathbb R)\times L_{\mathbb F^0}^2([0,T];\mathbb R)$
\begin{equation}\label{H}
\left\{
\begin{aligned}
dx_t^{\xi}=&~[Ax_t^{\xi}-B^2R^{-1}y_t^{\xi}-Bh(\mu_t)+f(\nu_t)+b(\mu_t)]dt\\
&+\sigma dW_t+\sigma_0dW^0_t,\\
dy_t^{\xi}=&-[Ay_t^{\xi}+Qx_t^{\xi}+Ql(\nu_t)]dt+z_t^{\xi}dW_t+z^{0,\xi}_tdW^0_t,\\
x_0^{\xi}=&~\xi,~y_T^{\xi}=G(x_T^\xi+g(\nu_T)),
\end{aligned}
\right.
\end{equation}
which is derived from the minimization problem \eqref{minimization} by using the stochastic maximum principle. Moreover, the corresponding optimal control process is given by
\begin{equation}\label{eq:optalpha}
\alpha_t^{\xi}=-R^{-1}By_t^{\xi}-h(\mu_t).
\end{equation}
We note that the optimal control process $\alpha^{\xi}$ given above is presented in open-loop form. Thanks to the LQ setting, we shall be able to apply the decoupling method to show that such optimal control $\alpha^\xi$ is in fact in a feedback form. To find the feedback form, we introduce the Riccati equation
\begin{equation}\label{P}
\left\{
\begin{aligned}
&\dot{P}_t+2AP_t-B^2R^{-1}P^2
_t+Q=0,\\
&P_T=G,
\end{aligned}
\right.
\end{equation}
and the backward stochastic differential equation (BSDE)
\begin{equation}\label{varphi}
\left\{
\begin{aligned}
d\varphi_t^\xi=&-[(A-B^2R^{-1}P_t)\varphi_t^\xi+P_tf(\nu_t)+P_tb(\mu_t)\\&+Ql(\nu_t)-P_tBh(\mu_t)]dt+\Lambda_t^{0,\xi}dW^0_t,\\
\varphi_T^\xi=&~Gg(\nu_T).
\end{aligned}
\right.
\end{equation}
We note that the above BSDE \eqref{varphi} is driven by the common noise $W^0$ only. We shall prove in the following theorem that under Assumption (A) all the above equations \eqref{H}, \eqref{P}, \eqref{varphi} are well-posed.

\begin{theorem}\label{closeloop}
Suppose that Assumption (A) holds. Let $\xi\in L^2_{\mathcal{F}_0}$ and $(\mu,\nu) \in L^2_{\mathbb{F}^0}([0,T];\mathbb{R})\times L^2_{\mathbb{F}^0}([0,T];\mathbb{R})$.\\
(i) The Riccati equation \eqref{P} and the BSDE \eqref{varphi} admit a unique bounded solution $P$ and a unique solution $(\varphi^\xi,\Lambda^{0,\xi})$ on $[0,T]$, respectively;\\
(ii) The optimal control for the minimization problem \eqref{minimization} is given by the feedback form 
\begin{equation}\label{alpha}
\alpha^\xi_t:=-R^{-1}B(P_tx^\xi_t+\varphi_t^\xi)-h(\mu_t)
\end{equation}
 where its corresponding optimal trajectory follows
\begin{equation}\label{fstate}
 \left\{
 \begin{aligned}
 dx_t^{\xi}=&~[(A-B^2R^{-1}P_t)x_t^{\xi}-B^2R^{-1}\varphi_t^\xi-Bh(\mu_t)\\&+f(\nu_t)+b(\mu_t)]dt+\sigma dW_t+\sigma_0dW^0_t,\\
 x_0^{\xi}=&~\xi;
 \end{aligned}
 \right.
 \end{equation}
 (iii) Given $x^\xi$ in (ii), the BSDE in the stochastic Hamiltonian system \eqref{H} admits a unique solution $(y^\xi,z^\xi,z^{0,\xi})$ on $[0,T]$ where $y_t^\xi=P_tx_t^\xi+\varphi_t^\xi$, $z_t^\xi=P_t\sigma$ and $z_t^{0,\xi}=P_t\sigma_0+\Lambda^{0,\xi}_t$. Therefore, $(x^\xi,y^\xi,z^\xi,z^{0,\xi})$ is the unique strong solution to the stochastic Hamiltonian system \eqref{H}.
\end{theorem}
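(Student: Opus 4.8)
The plan is to use the linear-quadratic structure to make every object explicit and to treat the three parts in order. For part (i), the Riccati equation \eqref{P} is a scalar ODE whose right-hand side $p\mapsto -2Ap+B^2R^{-1}p^2-Q$ is smooth, so Cauchy--Lipschitz gives a unique maximal solution; to extend it to all of $[0,T]$ and bound it, I would identify $\tfrac12 P_t x^2$ with the value at $(t,x)$ of the deterministic LQ problem $\inf_\beta\tfrac12\big(\int_t^T(Qx_s^2+R\beta_s^2)\,ds+Gx_T^2\big)$ subject to $\dot x_s=Ax_s+B\beta_s$, $x_t=x$. Since $Q,G\ge 0$ and $R>0$ this value is nonnegative, so $P_t\ge 0$, and testing with $\beta\equiv 0$ gives $P_t\le \int_0^T Qe^{2As}\,ds+Ge^{2AT}=:C_P$; this a priori bound rules out blow-up and yields a unique solution with $0\le P_t\le C_P$ on $[0,T]$ (a direct ODE comparison argument is an alternative). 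With $P$ bounded, \eqref{varphi} is a linear BSDE driven only by $W^0$, with bounded coefficient $A-B^2R^{-1}P_t$, bounded inhomogeneous term $P_tf(\nu_t)+P_tb(\mu_t)+Ql(\nu_t)-P_tBh(\mu_t)$ and bounded terminal value $Gg(\nu_T)$ (here Assumption (A)(ii) is used). Standard linear BSDE theory then provides a unique solution $(\varphi^\xi,\Lambda^{0,\xi})$; since the data are $\mathbb F^0$-adapted and the only driving noise is $W^0$, this solution is $\mathbb F^0$-adapted and, via the usual conditional-expectation representation, bounded. This proves (i).

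For the existence halves of (ii) and (iii): the minimization problem \eqref{minimization} is strictly convex in $\alpha$, since $x^{\xi,\alpha}$ depends affinely on $\alpha$ so the state penalties are convex in $\alpha$, while $R>0$ makes $\tfrac12 R(\alpha_t+h(\mu_t))^2$ strictly convex. Hence the stochastic maximum principle --- which, as already noted, produces \eqref{H} and \eqref{eq:optalpha} --- is both necessary and sufficient, so it suffices to solve \eqref{H} and to read off the optimal control from \eqref{eq:optalpha}. I would then insert the feedback ansatz $\alpha_t^\xi:=-R^{-1}B(P_tx_t^\xi+\varphi_t^\xi)-h(\mu_t)$ into \eqref{lstate}, obtaining the linear SDE \eqref{fstate} with bounded $\mathbb F$-adapted coefficients, which has a unique strong solution $x^\xi$; and set $y_t^\xi:=P_tx_t^\xi+\varphi_t^\xi$, $z_t^\xi:=P_t\sigma$, $z_t^{0,\xi}:=P_t\sigma_0+\Lambda_t^{0,\xi}$. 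Applying Itô's formula to $P_tx_t^\xi+\varphi_t^\xi$ and using \eqref{P} and \eqref{varphi} to collapse the drift, one checks that this triple satisfies the backward equation of \eqref{H}, with terminal value $P_Tx_T^\xi+\varphi_T^\xi=G(x_T^\xi+g(\nu_T))$. Thus $(x^\xi,y^\xi,z^\xi,z^{0,\xi})$ solves \eqref{H}, the induced optimal control is \eqref{alpha}, and its trajectory is \eqref{fstate}.

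For uniqueness in (iii) (hence uniqueness of the optimal control as well), take any solution $(x,y,z,z^0)$ of \eqref{H} and consider $\tilde y_t:=y_t-P_tx_t-\varphi_t^\xi$. Differentiating via Itô and substituting the forward equation of \eqref{H} for $dx_t$, the Riccati equation \eqref{P} for $\dot P_t$, and \eqref{varphi} for $d\varphi_t^\xi$, all the $x$-, $y$-, and inhomogeneous terms cancel, so that $(\tilde y,\,z-P\sigma,\,z^0-P\sigma_0-\Lambda^{0,\xi})$ solves the homogeneous linear BSDE $d\tilde y_t=-(A-B^2R^{-1}P_t)\tilde y_t\,dt+(z_t-P_t\sigma)\,dW_t+(z_t^0-P_t\sigma_0-\Lambda_t^{0,\xi})\,dW_t^0$ with $\tilde y_T=0$. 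Uniqueness for linear BSDEs forces $\tilde y\equiv 0$ and the two martingale parts to vanish, i.e. $y=Px+\varphi^\xi$, $z=P\sigma$, $z^0=P\sigma_0+\Lambda^{0,\xi}$; plugging $y$ into the forward equation shows $x$ solves \eqref{fstate}, which is uniquely solvable. Hence \eqref{H} has a unique strong solution.

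I expect the only genuinely structural point to be the global-in-time well-posedness and uniform boundedness of the Riccati solution $P$ on the whole of $[0,T]$, which is exactly what the sign conditions $Q,G\ge 0$, $R>0$ provide; without $\sup_{[0,T]}|P_t|<\infty$ one cannot control the coefficients of \eqref{varphi} or of the decoupled state equation \eqref{fstate}, and the decoupling field $y=Px+\varphi$ would not be available. The remaining ingredients are classical linear SDE/BSDE theory and an Itô-formula bookkeeping computation; the two places that still need a little care are the convexity argument upgrading the maximum principle from necessary to sufficient (so that a solution of \eqref{H} is genuinely the minimizer rather than a mere critical point) and the bookkeeping of $\mathbb F^0$- versus $\mathbb F$-adaptedness of the various processes.
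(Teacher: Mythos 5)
Your proof is correct and its core — decoupling the Hamiltonian system via the Riccati equation \eqref{P} and the BSDE \eqref{varphi}, verifying $y^\xi=Px^\xi+\varphi^\xi$ by It\^o's formula — is exactly the paper's route. Where you genuinely diverge is in the supporting steps, and in each case your version is the more self-contained one. First, for part (i) the paper simply declares that the ``standard Riccati equation'' has a unique bounded solution; your value-function (or comparison) argument giving $0\le P_t\le C_P$ from $Q,G\ge0$, $R>0$ is the substance behind that claim and correctly identifies it as the one structural point. Second, you make explicit that convexity of \eqref{lcost} in $\alpha$ upgrades the stochastic maximum principle from a necessary to a sufficient condition, so that solving \eqref{H} really produces the minimizer; the paper leaves this implicit. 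Third, and most substantively, for uniqueness of solutions to \eqref{H} the paper appeals to ``the standard local well-posedness theory of FBSDEs,'' which by itself only controls a small time interval near $T$; your argument — showing that for any solution $(x,y,z,z^0)$ the process $\tilde y=y-Px-\varphi^\xi$ solves a homogeneous linear BSDE with zero terminal data (the drift collapses to $-(A-B^2R^{-1}P_t)\tilde y_t$ after substituting \eqref{P} and \eqref{varphi}), hence vanishes, forcing $y=Px+\varphi^\xi$ and reducing the forward equation to the uniquely solvable \eqref{fstate} — gives global uniqueness on all of $[0,T]$ directly and is the cleaner way to justify the theorem's final assertion. No gaps.
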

\begin{proof}
(i) The ordinary differential equation \eqref{P} is a standard Riccati equation and thus it admits a unique bounded solution $P$. We then note that the BSDE \eqref{varphi} is a linear BSDE with bounded coefficients. By the standard BSDE theory (see e.g. \cite{Zhang2017}), the BSDE \eqref{varphi} has a unique solution $(\varphi^\xi,\Lambda^{0,\xi})$.

(ii)(iii) We show the proofs of (ii) and (iii) together. Given $P$ and $\varphi^\xi$ in (i), we define the following feedback control:
\begin{equation}\label{eq:alpharev}
\alpha_t(x):=-R^{-1}B(P_tx+\varphi_t^\xi)-h(\mu_t).
\end{equation}
Plugging the above $\alpha$ into \eqref{lstate}, we obtain the SDE \eqref{fstate}. Since $P$ is bounded and $R>0$, the SDE \eqref{fstate} is well-posed on $[0,T]$ and denote its solution by $x^\xi$. Define
\[y_t^{\xi}:=P_tx_t^{\xi}+\varphi_t^\xi,\quad z_t^\xi:=P_t\sigma,\quad z_t^{0,\xi}:=P_t\sigma_0+\Lambda_t^{0,\xi}.\]
Note that $y_T^\xi=P_Tx_T^\xi+\varphi_T^\xi=G(x_T^\xi+g(\nu_T))$. Applying the It\^{o}'s formula to $y_t^{\xi}$, it yields
 \begin{equation*}
 \begin{aligned}
 dy_t^{\xi}=&[-2AP_t+B^2R^{-1}P_t^2-Q]x_t^{\xi}dt\\
 &+P_t[Ax_t^{\xi}-B^2R^{-1}y_t^{\xi}-Bh(\mu_t)+f(\nu_t)+b(\mu_t)]dt+P_t\sigma dW_t+P_t\sigma_0dW^0_t\\
 &-[(A-B^2R^{-1}P_t)\varphi_t+P_tf(\nu_t)+P_tb(\mu_t)+Ql(\nu_t)-P_tBh(\mu_t)]dt+\Lambda_t^{0,\xi}dW_t^0\\
 =&[-(A-B^2R^{-1}P_t)y_t^\xi-Qx_t^{\xi}-P_tB^2R^{-1}y_t^{\xi}-Ql(\nu_t)]dt\\
 &+P_t\sigma dW_t+(P_t\sigma_0+\Lambda_t^{0,\xi})dW_t^0\\
 =&-[Ay_t^\xi+Qx_t^\xi+Ql(\nu_t)]dt+z_t^{\xi}dW_t+z_t^{0,\xi}dW_t^0.
 \end{aligned}
 \end{equation*}
By the standard BSDE theory, we know that $(y^{\xi},z^\xi,z^{0,\xi})$ is the unique solution to the BSDE in the stochastic Hamiltonian system \eqref{H} on $[0,T]$. Applying $y_t^{\xi}=P_tx_t^{\xi}+\varphi_t^\xi$, we note that $x^\xi$ satisfies the forward stochastic differential equation in \eqref{H} on $[0,T]$. Therefore, we verify that $(x^{\xi},y^{\xi},z^\xi,z^{0,\xi})$ is a strong solution to the stochastic Hamiltonian system \eqref{H}. The uniqueness of the strong solutions to \eqref{H} on $[0,T]$ follows from the standard local well-posedness theory of FBSDEs.

By \eqref{eq:optalpha}, we have $\alpha_t^\xi=-R^{-1}B(P_tx^\xi_t+\varphi_t^\xi)-h(\mu_t)$ is the optimal control for the minimization problem \eqref{minimization}.\end{proof}

\subsection{NCE system}
\label{sebsect-NCE}

Until now, all the results derived in subsection \ref{subsect-SMP} are based on given the stochastic measure flow $(\mu,\nu)\in L^2_{\mathbb F^0}([0,T];\mathbb R)\times L^2_{\mathbb F^0}([0,T];\mathbb R)$. 

In this subsection, we aim to characterize $(\mu^{*,\xi},\nu^{*,\xi})$ which satisfies the consistency condition \eqref{eq:consistency}. By the consistency condition \eqref{eq:consistency} and Theorem \ref{closeloop}, we have $(\mu^{*,\xi},\nu^{*,\xi})$ such that 
\begin{equation*}
\mu_t^{*,\xi}=\mathbb E\Big[\alpha_t^{*,\xi}|\mathcal{F}_t^{W^0}\Big] \quad\text{and}\quad \nu_t^{*,\xi}=\mathbb E\Big[x_t^{*,\xi}|\mathcal{F}_t^{W^0}\Big]
\end{equation*}
where $x^{*,\xi}$ is the strong solution to the SDE
\begin{equation}\label{fstate*}
 \left\{
 \begin{aligned}
 dx_t^{*,\xi}=&~[(A-B^2R^{-1}P_t)x_t^{*,\xi}-B^2R^{-1}\varphi_t^{*,\xi}-Bh(\mu_t^{*,\xi})\\&+f(\nu_t^{*,\xi})+b(\mu_t^{*,\xi})]dt+\sigma dW_t+\sigma_0dW^0_t,\\
 x_0^{*,\xi}=&~\xi,
 \end{aligned}
 \right.
 \end{equation}
$\varphi^{*,\xi}$ is the solution to the BSDE
\begin{equation}\label{varphi*}
\left\{
\begin{aligned}
d\varphi_t^{*,\xi}=&-[(A-B^2R^{-1}P_t)\varphi_t^{*,\xi}+P_tf(\nu_t^{*,\xi})+P_tb(\mu_t^{*,\xi})\\&+Ql(\nu_t^{*,\xi})-P_tBh(\mu_t^{*,\xi})]dt+\Lambda_t^{0,*,\xi}dW^0_t,\\
\varphi_T^{*,\xi}=&~Gg(\nu_T^{*,\xi}),
\end{aligned}
\right.
\end{equation}
and $\alpha^{*,\xi}$ is given by
\begin{equation}\label{alpha*}
\alpha^{*,\xi}_t:=-R^{-1}B(P_tx^{*,\xi}_t+\varphi_t^{*,\xi})-h(\mu_t^{*,\xi}).
\end{equation}
Taking the conditional expectation of \eqref{alpha*} on $\mathcal{F}_t^{W^0}$, we have 
\begin{equation*}
\mu_t^{*,\xi}=-R^{-1}BP_t\nu_t^{*,\xi}-R^{-1}B\varphi_t^{*,\xi}-h(\mu_t^{*,\xi}),
\end{equation*}
which leads to
\[\mu_t^{*,\xi}+h(\mu^{*,\xi}_t)=-R^{-1}BP_t\nu_t^{*,\xi}-R^{-1}B\varphi_t^{*,\xi}.\]
Applying Assumption (B), we can use the inverse function theorem to derive that there exists a uniformly Lipschitz continuous function $\rho:\mathbb R\to\mathbb R$ such that 
\begin{equation}\label{rho}\mu_t^{*,\xi}=\rho(-R^{-1}BP_t\nu_t^{*,\xi}-R^{-1}B\varphi_t^{*,\xi})  \end{equation}
where
\begin{equation}\label{eq:rho'}
|\rho^\prime|= \left|\frac{1}{1+h^\prime}\right|\leq \frac{1}{\varepsilon_0}. 
\end{equation}
For simplicity, we define a function $k:[0,T]\times\mathbb R\times\mathbb R\to\mathbb R$ by
\begin{equation}\label{k}
k(t,x,y):=\rho(-R^{-1}BP_tx-R^{-1}By)\quad\text{for any $(t,x,y)\in[0,T]\times\mathbb R\times\mathbb R$}
\end{equation}
and thus \eqref{rho} becomes
\[
\mu_t^{*,\xi}=k(t,\nu_t^{*,\xi},\varphi_t^{*,\xi}).
\]
We note that \eqref{eq:rho'} implies $k(t,\cdot,\cdot)$ is uniformly Lipschitz continuous in $x$ and $y$, uniformly in $[0,T]$. Taking the conditional expectation of \eqref{fstate*} on $\mathcal{F}^{W^0}_t$, we have 
\begin{equation*}
\begin{aligned}
\nu_t^{*,\xi}=&\mathbb{E}[x_t^{*,\xi}|\mathcal{F}^{W^0}_t]\\
=&\mathbb{E}[\xi]+\int_0^t[(A-B^2R^{-1}P_s)\mathbb{E}[x_s^{*,\xi}|\mathcal{F}_s^{W^0}]-B^2R^{-1}\varphi_s^{*,\xi}-Bh(\mu_s^{*,\xi})+f(\nu_s^{*,\xi})+b(\mu_s^{*,\xi})]ds+\int_0^t\sigma_0dW^0_s,
\end{aligned}
\end{equation*}
which implies 
\begin{equation}\label{nu*}
\left\{
\begin{aligned}
d\nu_t^{*,\xi}=&[(A-B^2R^{-1}P_t)\nu_t^{*,\xi}-B^2R^{-1}\varphi_t^{*,\xi}-Bh(\mu_t^{*,\xi})+f(\nu_t^{*,\xi})+b(\mu_t^{*,\xi})]dt+\sigma_0dW^0_t,\\
\nu_0^{*,\xi}=&~\mathbb{E}[\xi].
\end{aligned}
\right.
\end{equation}
Substituting $\mu_t^{*,\xi}=k(t,\nu_t^{*,\xi},\varphi_t^{*,\xi})$ into \eqref{varphi*} and \eqref{nu*}, we derive the following Nash certainty equivalence (NCE) system 
\begin{equation}\label{NCE}
 \left\{
 \begin{aligned}
 d\nu_t^{*,\xi}=&~\Big[(A-B^2R^{-1}P_t)\nu_t^{*,\xi}-B^2R^{-1}\varphi_t^{*,\xi}-Bh\Big(k(t,\nu_t^{*,\xi},\varphi_t^{*,\xi})\Big)\\
 &+f(\nu_t^{*,\xi})+b\Big(k(t,\nu_t^{*,\xi},\varphi_t^{*,\xi})\Big)\Big]dt+\sigma_0dW^0_t,\\
 d\varphi_t^{*,\xi}=&-\Big[(A-B^2R^{-1}P_t)\varphi_t^{*,\xi}+P_tf(\nu_t^{*,\xi})+P_tb\Big(k(t,\nu_t^{*,\xi},\varphi_t^{*,\xi})\Big)\\&+Ql(\nu_t^{*,\xi})-P_tBh\Big(k(t,\nu_t^{*,\xi},\varphi_t^{*,\xi})\Big)\Big]dt+\Lambda_t^{0,*,\xi}dW^0_t,\\
\nu_0^{*,\xi}=&~\mathbb{E}[\xi],~\varphi_T^{*,\xi}=Gg(\nu_T^{*,\xi}).
  \end{aligned}
 \right.
 \end{equation}
The NCE system \eqref{NCE} is a coupled FBSDE driven by the common noise $W^0$ only. In fact, the NCE system \eqref{NCE} is non-degenerate since $\sigma_0>0$. Different from most of the existing literature, we shall prove the well-posedness of NCE system \eqref{NCE} without any \emph{monotonicity conditions} by using the ideas in \cite{Delarue2002,MaProtterYong1994}.

\begin{theorem}\label{NCEwellposed}
Suppose that Assumptions (A), (B) hold. For any $\xi\in L^2_{\mathcal{F}_0}$, there exists a unique strong solution $(\nu^{*,\xi},\varphi^{*,\xi}, \Lambda^{*,\xi})$ to the NCE system \eqref{NCE}. 
\end{theorem}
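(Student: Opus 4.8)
The plan is to establish global well-posedness of the FBSDE \eqref{NCE} — a scalar coupled system driven only by the non-degenerate common noise $W^0$ — by a continuation/bootstrap argument in the time horizon, exploiting the non-degeneracy $\sigma_0>0$ exactly as in Tchuendom \cite{Tchuendom2018} and Delarue-Tchuendom \cite{DelarueTchuendom2020}, with no monotonicity used. First I would record the structural features: the drift and terminal data are uniformly Lipschitz (using Assumption (A), the boundedness of $P$ from Theorem \ref{closeloop}(i), and the fact that $k(t,\cdot,\cdot)$ is uniformly Lipschitz by \eqref{eq:rho'}–\eqref{k}, which in turn rests on Assumption (B)); moreover all the nonlinearities $f,b,h,l,g$ are \emph{bounded}. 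Local existence and uniqueness on a short interval $[T-\delta,T]$, with $\delta$ depending only on the Lipschitz constants, is standard (fixed point on the decoupling field / contraction on $C([T-\delta,T])$). The content is to show that $\delta$ can be taken uniform, so that the local solution can be extended step by step to all of $[0,T]$; equivalently, to obtain an a priori bound on the Lipschitz constant (or Hölder seminorm) of the decoupling field $\varphi^{*,\xi}_t = u(t,\nu^{*,\xi}_t)$ that does not blow up in finite time.

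The key step is the a priori estimate on the decoupling field. Here I would use that the forward equation is a genuine SDE with additive, non-degenerate noise $\sigma_0 dW^0_t$ and bounded drift coefficients: writing $\varphi^{*,\xi}_t = u(t,\nu^{*,\xi}_t)$ for the (locally defined) decoupling field, the difference of two solutions of the forward SDE started from $x$ and $x'$ at time $t$ satisfies a linear SDE, and the backward component can be represented via the terminal condition $Gg(\nu_T)$ plus the running term. Because the noise is non-degenerate, one gets a smoothing/Bismut–Elworthy–Li type bound showing that $|u(t,x)-u(t,x')|$ is controlled by $|x-x'|$ with a constant that, crucially, does not depend on the length of the remaining interval — this is the mechanism by which $\sigma_0>0$ replaces monotonicity. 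Concretely I would follow the scheme of \cite{Delarue2002} and \cite{MaProtterYong1994}: combine (a) the four-step / decoupling representation, (b) the linear a priori bound on $\|\nu^{*,\xi}\|$ and $\|\varphi^{*,\xi}\|$ obtained from Gronwall (using boundedness of $f,b,h,l,g$ to absorb the inhomogeneous terms), and (c) the Lipschitz bound on $u$ from the non-degeneracy. With the Lipschitz norm of $u$ bounded a priori, the local step length $\delta$ stays bounded below, and finitely many iterations cover $[0,T]$.

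Given the uniform estimates, the conclusion follows in the usual two parts. For existence: patch the local decoupling fields to get $u$ on $[0,T]$, solve the forward SDE \eqref{nu*}-type equation for $\nu^{*,\xi}$ with coefficient $\varphi^{*,\xi}_t=u(t,\nu^{*,\xi}_t)$ (well-posed since $u$ is Lipschitz and the drift is Lipschitz), then set $\varphi^{*,\xi}_t=u(t,\nu^{*,\xi}_t)$ and read off $\Lambda^{0,*,\xi}$ from the martingale representation applied to $\varphi^{*,\xi}$; an Itô expansion verifies \eqref{NCE}. For uniqueness: any strong solution must, on each short interval, coincide with the one generated by the local decoupling field (local uniqueness of the FBSDE), and stitching these gives global uniqueness on $[0,T]$. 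The main obstacle I anticipate is step (c): making rigorous and quantitative the claim that non-degeneracy of $W^0$ alone yields a horizon-independent Lipschitz (or Hölder) bound on the decoupling field for this quadratic-cost LQ-of-controls structure — in particular tracking how the $k(t,\cdot,\cdot)$ term (hence Assumption (B)) enters the constants, and checking that the implicit-function-generated nonlinearity does not destroy the smoothing estimate; the boundedness of $f,b,h,l,g$ is what keeps everything else routine.
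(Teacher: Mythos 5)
Your overall strategy is the right one and is, in spirit, the same as the paper's: use the non-degeneracy $\sigma_0>0$ to invoke the Delarue \cite{Delarue2002} theory of non-degenerate FBSDEs in place of any monotonicity. However, as written the proposal has a genuine gap, and it is exactly the point the paper singles out in Remark~1(i). The coefficients of the NCE system \eqref{NCE} do \emph{not} satisfy the growth conditions of \cite[Theorem 2.6]{Delarue2002}: the forward drift contains the term $(A-B^2R^{-1}P_t)\nu_t^{*,\xi}$, which is unbounded (linear) in the forward variable, and the backward driver contains $(A-B^2R^{-1}P_t)\varphi_t^{*,\xi}$. So you can neither cite that theorem nor run its continuation scheme verbatim on \eqref{NCE}; your assertion that ``all the nonlinearities are bounded'' overlooks these linear terms. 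The paper's fix is an exponential change of variables: with $\eta_t=\exp\int_t^T(A-B^2R^{-1}P_s)\,ds$, which is bounded above and below, one sets $\widetilde{\nu}_t=\eta_t\nu_t^{*,\xi}$, $\widetilde{\varphi}_t=\eta_t^{-1}\varphi_t^{*,\xi}$, $\widetilde{\Lambda}_t=\eta_t^{-1}\Lambda_t^{0,*,\xi}$. This absorbs the linear terms, produces the equivalent system \eqref{EFBSDE} whose coefficients (built from the bounded, Lipschitz $f,b,l$ and the Lipschitz $\widetilde h,\widetilde b$ coming from Assumption (B) via the inverse function theorem) do satisfy Delarue's hypotheses, and the theorem then follows by direct citation plus the equivalence of the two systems. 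Your proposal contains no such reduction and no substitute for it.

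The second, related problem is that the crucial quantitative step in your outline --- the horizon-independent Lipschitz bound on the decoupling field obtained from non-degeneracy alone, your step (c) --- is precisely the content of Delarue's theorem, and you leave it unproved; indeed you explicitly flag it as ``the main obstacle I anticipate.'' A proof that defers its central estimate to an anticipated obstacle is not yet a proof. Once the $\eta_t$-transformation is in place, none of this machinery needs to be re-derived: the estimate is exactly what \cite[Theorem 2.6]{Delarue2002} (and \cite[Corollary 2.8]{Delarue2002}, used later for the regularity of the decoupling field) provides for the transformed system. I would therefore revise the argument to (a) perform the change of variables, (b) verify the hypotheses of Delarue's theorem for \eqref{EFBSDE}, and (c) transfer existence and uniqueness back to \eqref{NCE} through the equivalence.
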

\begin{proof}
We would like to first derive an equivalent system of the NCE system \eqref{NCE}. Let $P$ be the unique solution to the Riccati equation \eqref{P}. Define
\begin{equation}\label{eta} \eta_t:=\exp\int_t^T(A-B^2R^{-1}P_s)ds.\end{equation}
Then there exists some $\varepsilon_1>0$ such that
 \begin{equation}\label{eq:eta} \varepsilon_1\leq\eta_t\leq \frac{1}{\varepsilon_1}.
 \end{equation} 
 Suppose that $(\nu^{*,\xi},\varphi^{*,\xi},\Lambda^{0,*,\xi})$ is a strong solution to \eqref{NCE}. We introduce the following transform:
 \begin{equation}\label{transform}\widetilde{\nu}_t^{*,\xi}=\eta_t \nu_t^{*,\xi},\quad \widetilde{\varphi}_t^{*,\xi}=\eta^{-1}_t\varphi_t^{*,\xi} ,\quad\widetilde{\Lambda}_t^{0,*,\xi}=\eta^{-1}_t\Lambda_t^{0,*,\xi}, \text{ for any } t\in[0,T].\end{equation}
 To simply the notations, we first define a function $\widetilde k:[0,T]\times\mathbb R\times\mathbb R\to\mathbb R$ by
  \begin{equation}\label{tildek}
 \widetilde k(t,x,y):=\rho(-R^{-1}BP_t\eta_t^{-1}x-R^{-1}B\eta_t y)\quad\text{for any $(t,x,y)\in[0,T]\times\mathbb R\times\mathbb R$}
 \end{equation}
 and then we define functions $\widetilde h, \widetilde b:[0,T]\times\mathbb R\times\mathbb R\to \mathbb R$ by
 \begin{equation}\label{tildehtildeb}
  \widetilde h(t,x,y)=h( \widetilde k(t,x,y))\quad\text{and}\quad   \widetilde b(t,x,y)=b( \widetilde k(t,x,y)).
 \end{equation}
By Assumptions (A), (B) and using \eqref{eq:eta}, we know that $ \widetilde h(t,\cdot,\cdot), \widetilde b(t,\cdot,\cdot)$ are uniformly Lipschitz continuous in $x$ and $y$, uniformly in $t\in[0,T]$. It can be verified that 
 $(\widetilde{\nu}^{*,\xi},\widetilde{\varphi}^{*,\xi},\widetilde{\Lambda}^{0,*,\xi})$ is a strong solution to the following FBSDE: 
 \begin{equation}\label{EFBSDE}
 \left\{
 \begin{aligned}
 d\widetilde{\nu}_t^{*,\xi}=&~\Big[-\eta_t^2B^2R^{-1}\widetilde{\varphi}_t^{*,\xi}-\eta_tB \widetilde h( t, \widetilde\nu_t^{*,\xi}, \widetilde\varphi_t^{*,\xi})\\
 &+\eta_tf(\eta_t^{-1}\widetilde{\nu}_t^{*,\xi})+\eta_t\widetilde b( t, \widetilde\nu_t^{*,\xi}, \widetilde\varphi_t^{*,\xi})\Big]dt+\eta_t\sigma_0dW^0_t,\\
 d\widetilde{\varphi}_t^{*,\xi}=&-\Big[\eta_t^{-1}P_tf(\eta_t^{-1}\widetilde{\nu}_t^{*,\xi})+\eta_t^{-1}P_t\widetilde b( t, \widetilde\nu_t^{*,\xi}, \widetilde\varphi_t^{*,\xi})\\
 &+\eta_t^{-1}Ql(\eta_t^{-1}\widetilde{\nu}_t^{*,\xi})-\eta_t^{-1}P_tB\widetilde h( t, \widetilde\nu_t^{*,\xi}, \widetilde\varphi_t^{*,\xi})\Big]dt+\widetilde{\Lambda}_t^{0,*,\xi}dW^0_t,\\
\widetilde{\nu}_0^{*,\xi}=&~\eta_0\mathbb{E}[\xi],~\widetilde{\varphi}_T^{*,\xi}=Gg(\widetilde{\nu}_T^{*,\xi}).
  \end{aligned}
 \right.
 \end{equation}
 Similarly, we can check that if $(\widetilde{\nu}^{*,\xi},\widetilde{\varphi}^{*,\xi},\widetilde{\Lambda}^{0,*,\xi})$ is a strong solution to \eqref{EFBSDE}, then $(\nu^{*,\xi},\varphi^{*,\xi},\Lambda^{0,*,\xi})$ is a strong solution to \eqref{NCE}. Therefore, we show the equivalence between the NCE system \eqref{NCE} and the FBSDE \eqref{EFBSDE}.

We note that the coefficients in \eqref{EFBSDE} satisfy all the assumptions in \cite[Theorem 2.6]{Delarue2002}. Therefore, we derive the well-posedness of the FBSDE \eqref{EFBSDE}. By the equivalence, we show the well-posedness of the NCE system \eqref{NCE}.
\end{proof}
\begin{remark}
(i) It is worth noting that, although $\sigma_0>0$ in the NCE system \eqref{NCE}, its coefficients do not satisfy the corresponding growth conditions in \cite[Theorem 2.6]{Delarue2002}. Therefore, we can not directly apply the well-posedness result \cite[Theorem 2.6]{Delarue2002} to the NCE system \eqref{NCE}.\\
(ii) The non-degeneracy assumption $\sigma_0>0$ is necessary for the above global well-posedness result for the NCE system \eqref{NCE} without any monotonicity conditions. Otherwise, it is possible to construct an example such that the NCE system is not well-posed, see, e.g. \cite[Section 4.2]{Tchuendom2018}. 
\end{remark}

\subsection{Master equation}
Throughout this subsection, we assume that Assumptions (A), (B), (C) hold. We remind that the functions $\rho$ and $k$ are given in \eqref{rho} and \eqref{k}.

In this subsection, we consider the well-posedness of the following master equation corresponding to the Problem (MF):
   \begin{equation}\label{master}
 \left\{
 \begin{aligned}
&\partial_tV(t,x,\nu)-\frac{1}{2}B^2R^{-1}|\partial_xV(t,x,\nu)|^2+\partial_xV(t,x,\nu)\Big[Ax-Bh\Big(\rho\big(-R^{-1}B\mathbb E\big[\partial_xV(t,\xi,\nu)\big]\big)\Big)\\
 &+f(\nu)+b\Big(\rho\big(-R^{-1}B\mathbb E\big[\partial_xV(t,\xi,\nu)\big]\big)\Big)\Big]
 +\frac{1}{2}\partial_{xx}V(t,x,\nu)(\sigma^2+\sigma_0^2)+\partial_{\nu}V(t,x,\nu)\Big[A\nu-B^2R^{-1}\mathbb E\big[\partial_xV(t,\xi,\nu)\big]\\
 &-Bh\Big(\rho\big(-R^{-1}B\mathbb E\big[\partial_xV(t,\xi,\nu)\big]\big)\Big)+f(\nu)+b\Big(\rho\big(-R^{-1}B\mathbb E\big[\partial_xV(t,\xi,\nu)\big]\big)\Big)\Big]\\
 &+\frac{1}{2}\partial_{\nu\nu}V(t,x,\nu)\sigma_0^2+\partial_{x\nu}V(t,x,\nu)\sigma_0^2+\frac{1}{2}Q\big(x+l(\nu)\big)^2=0,\\
 &V(T,x,\nu)=\frac{1}{2}G\big(x+g(\nu)\big)^2,
 \end{aligned}
 \right.
 \end{equation}
where $\xi\in L_{\mathcal{F}_0}^2$ with $\mathbb E[\xi]=\nu$.

We shall study the well-posedness of the above master equation \eqref{master} via the following vectorial master equation 
  \begin{equation}\label{eq:vecmaster}
  \left\{
 \begin{aligned}
 &\partial_tU(t,x,\nu)+\partial_xU(t,x,\nu)\Big[Ax-B^2R^{-1}U(t,x,\nu)-Bh\Big(\rho\big(-R^{-1}B\mathbb E\big[U(t,\xi,\nu)\big]\big)\Big)\\
 &+f(\nu)+b\Big(\rho\big(-R^{-1}B\mathbb E\big[U(t,\xi,\nu)\big]\big)\Big)\Big]
 +\frac{1}{2}\partial_{xx}U(t,x,\nu)(\sigma^2+\sigma_0^2)+\partial_{\nu}U(t,x,\nu)\Big[A\nu-B^2R^{-1}\mathbb E\big[U(t,\xi,\nu)\big]\\
 &-Bh\Big(\rho\big(-R^{-1}B\mathbb E\big[U(t,\xi,\nu)\big]\big)\Big)+f(\nu)+b\Big(\rho\big(-R^{-1}B\mathbb E\big[U(t,\xi,\nu)\big]\big)\Big)\Big]\\
 &+\frac{1}{2}\partial_{\nu\nu}U(t,x,\nu)\sigma_0^2+\partial_{x\nu}U(t,x,\nu)\sigma_0^2+AU(t,x,\nu)+Q(x+l(\nu))=0,\\
 &U(T,x,\nu)=G(x+g(\nu)).
 \end{aligned}
 \right.
 \end{equation}
The solution $U$ to the vectorial master equation serves as the decoupling field of the stochastic Hamiltonian system \eqref{H}. Thanks to the LQ setting, the solution $U$
 has the form $U(t,x,\nu)=P_tx +\Phi(t,\nu)$ where $P$ is the unique solution to the Riccati equation \eqref{P} and $\Phi$ solves
\begin{equation}\label{Phi}
\left\{
\begin{aligned}
&\partial_t\Phi(t,\nu)+\partial_\nu\Phi(t,\nu)\Big[(A-B^2R^{-1}P_t)\nu-B^2R^{-1}\Phi(t,\nu)-Bh\Big(k(t,\nu,\Phi(t,\nu))\Big)\\
&+f(\nu)+b\Big(k(t,\nu,\Phi(t,\nu))\Big)\Big]+\frac{1}{2}\partial_{\nu\nu}\Phi(t,\nu)\sigma_0^2+(A-B^2R^{-1}P_t)\Phi(t,\nu)+P_tf(\nu)\\
&+P_tb\Big(k(t,\nu,\Phi(t,\nu))\Big)+Ql(\nu)-P_tBh\Big(k(t,\nu,\Phi(t,\nu))\Big)=0,\\
&\Phi(T,\nu)=Gg(\nu).
\end{aligned}
\right.
\end{equation}
The solution $\Phi$ above is the decoupling field of the NCE system \eqref{NCE}. In light of the proof of Theorem \ref{NCEwellposed}, we shall consider the following FBSDE: for any $(t_0,\nu_0)\in[0,T]\times\mathbb R$
\begin{equation}\label{EFBSDEt0}
 \left\{
 \begin{aligned}
 d\widetilde{\nu}_t^{t_0,\nu_0}=&~\Big[-\eta_t^2B^2R^{-1}\widetilde{\varphi}_t^{t_0,\nu_0}-\eta_tB\widetilde h(t,\widetilde{\nu}_t^{t_0,\nu_0},\widetilde{\varphi}_t^{t_0,\nu_0})\\
 &+\eta_tf(\eta_t^{-1}\widetilde{\nu}_t^{t_0,\nu_0})+\eta_t\widetilde b(t,\widetilde{\nu}_t^{t_0,\nu_0},\widetilde{\varphi}_t^{t_0,\nu_0})\Big]dt+\eta_t\sigma_0dW^0_t,\\
d\widetilde{\varphi}_t^{t_0,\nu_0}=&-\Big[\eta_t^{-1}P_tf(\eta_t^{-1}\widetilde{\nu}_t^{t_0,\nu_0})+\eta_t^{-1}P_t\widetilde b(t,\widetilde{\nu}_t^{t_0,\nu_0},\widetilde{\varphi}_t^{t_0,\nu_0})\\
 &+\eta_t^{-1}Ql(\eta_t^{-1}\widetilde{\nu}_t^{t_0,\nu_0})-\eta_t^{-1}P_tB\widetilde h (t,\widetilde{\nu}_t^{t_0,\nu_0},\widetilde{\varphi}_t^{t_0,\nu_0})\Big]dt+\widetilde{\Lambda}_t^{0,t_0,\nu_0}dW^0_t,\\
\widetilde{\nu}_{t_0}^{t_0,\nu_0}=&~\nu_0,~\widetilde{\varphi}_T^{t_0,\nu_0}=\widetilde{g}(\widetilde{\nu}_T^{t_0,\nu_0}),
 \end{aligned}
 \right.
 \end{equation}
where $\widetilde{g}\in C^2(\mathbb R)$, $\eta_t$ is given in \eqref{eta} and $\widetilde{h},\widetilde{b}$ are given in \eqref{tildehtildeb}. The decoupling field $\widetilde{\Phi}$ of the FBSDE \eqref{EFBSDEt0} corresponds to the following PDE:
\begin{equation}\label{tildePhi}
\left\{
\begin{aligned}
&\partial_t\widetilde{\Phi}(t,\nu)+\partial_\nu\widetilde{\Phi}(t,\nu)\Big[-\eta_t^2B^2R^{-1}\widetilde\Phi(t,\nu)-\eta_tB\widetilde{h}\big(t,\nu,\widetilde\Phi(t,\nu)\big)+\eta_t f(\eta_t^{-1}\nu)+\eta_t\widetilde{b}(t,\nu,\widetilde\Phi(t,\nu))\Big]\\
&+\frac{1}{2}\partial_{\nu\nu}\widetilde\Phi(t,\nu)\eta_t^2\sigma_0^2+\eta_t^{-1}P_tf(\eta_t^{-1}\nu)+\eta_t^{-1}P_t\widetilde{b}\big(t,\nu,\widetilde\Phi(t,\nu)\big)+\eta_t^{-1}Ql(\eta_t^{-1}\nu)-\eta_t^{-1}P_tB\widetilde{h}(t,\nu,\widetilde\Phi(t,\nu))=0,\\
&\widetilde{\Phi}(T,\nu)=\widetilde{g}(\nu).
\end{aligned}
\right.
\end{equation}
We need to show that $\widetilde{\Phi}\in C^{1,2}([0,T]\times\mathbb R)$ to verify $\widetilde{\Phi}$ is indeed a classical solution to \eqref{tildePhi}. Let us consider the following FBSDE on $[t_0,T]$, which can be interpreted as a formal differentiation of \eqref{EFBSDEt0} with respect to $\nu_0$:
\begin{equation}\label{nablaEFBSDEt0}
 \left\{
 \begin{aligned}
 d\nabla\widetilde{\nu}_t^{t_0,\nu_0}=&~\Big[-\eta_t^2B^2R^{-1}\nabla \widetilde{\varphi}_t^{t_0,\nu_0}-\eta_tB\partial_{\nu}\widetilde h(t,\widetilde{\nu}_t^{t_0,\nu_0},\widetilde{\varphi}_t^{t_0,\nu_0})\nabla \widetilde{\nu}_t^{t_0,\nu_0}-\eta_tB\partial_{\varphi}\widetilde h(t,\widetilde{\nu}_t^{t_0,\nu_0},\widetilde{\varphi}_t^{t_0,\nu_0})\nabla \widetilde{\varphi}_t^{t_0,\nu_0}\\
 &+f'(\eta_t^{-1}\widetilde{\nu}_t^{t_0,\nu_0})\nabla \widetilde{\nu}_t^{t_0,\nu_0}+\eta_t\partial_\nu\widetilde b(t,\widetilde{\nu}_t^{t_0,\nu_0},\widetilde{\varphi}_t^{t_0,\nu_0})\nabla \widetilde{\nu}_t^{t_0,\nu_0}+\eta_t\partial_{\varphi}\widetilde b(t,\widetilde{\nu}_t^{t_0,\nu_0},\widetilde{\varphi}_t^{t_0,\nu_0})\nabla \widetilde{\varphi}_t^{t_0,\nu_0}\Big]dt,\\
d\nabla\widetilde{\varphi}_t^{t_0,\nu_0}=&-\Big[\eta_t^{-2}P_tf'(\eta_t^{-1}\widetilde{\nu}_t^{t_0,\nu_0})\nabla\widetilde{\nu}_t^{t_0,\nu_0}+\eta_t^{-1}P_t\partial_{\nu}\widetilde b(t,\widetilde{\nu}_t^{t_0,\nu_0},\widetilde{\varphi}_t^{t_0,\nu_0})\nabla \widetilde{\nu}_t^{t_0,\nu_0}+\eta_t^{-1}P_t\partial_{\varphi}\widetilde b(t,\widetilde{\nu}_t^{t_0,\nu_0},\widetilde{\varphi}_t^{t_0,\nu_0})\nabla \widetilde{\varphi}_t^{t_0,\nu_0}\\
 &+\eta_t^{-2}Ql'(\eta_t^{-1}\widetilde{\nu}_t^{t_0,\nu_0})\nabla \widetilde{\nu}_t^{t_0,\nu_0}-\eta_t^{-1}P_tB\partial_{\nu}\widetilde h (t,\widetilde{\nu}_t^{t_0,\nu_0},\widetilde{\varphi}_t^{t_0,\nu_0})\nabla \widetilde{\nu}_t^{t_0,\nu_0}\\
 &-\eta_t^{-1}P_tB\partial_{\varphi}\widetilde h (t,\widetilde{\nu}_t^{t_0,\nu_0},\widetilde{\varphi}_t^{t_0,\nu_0})\nabla \widetilde{\varphi}_t^{t_0,\nu_0}\Big]dt+\nabla \widetilde{\Lambda}_t^{0,t_0,\nu_0}dW^0_t,\\
\nabla\widetilde{\nu}_{t_0}^{t_0,\nu_0}=&~1,~\nabla\widetilde{\varphi}_T^{t_0,\nu_0}=\widetilde{g}'(\widetilde{\nu}_T^{t_0,\nu_0})\nabla \widetilde{\nu}_T^{t_0,\nu_0}.
 \end{aligned}
 \right.
 \end{equation}
\begin{theorem}\label{thm:tildePhi}
Let Assumptions (A), (B), (C) hold and let $\widetilde{g}\in C^2(\mathbb R)$ be a bounded function with bounded 1st and 2nd order derivatives. Then\\
(i) The FBSDE \eqref{EFBSDEt0} is well-posed on $[t_0,T]$ for any $(t_0,\nu_0)\in[0,T]\times\mathbb R$;\\
(ii) Define $\widetilde \Phi(t_0,\nu_0)=\widetilde{\varphi}_{t_0}^{t_0,\nu_0}$ for any $(t_0,\nu_0)\in[0,T]\times\mathbb R$. Then $\widetilde \Phi\in C^{1,2}([0,T]\times\mathbb R)$, with bounded $\partial_{\nu}\widetilde \Phi, \partial_{\nu\nu}\widetilde \Phi$, is the unique classical solution to \eqref{tildePhi}.
\end{theorem}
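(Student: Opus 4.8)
The plan is to prove Theorem \ref{thm:tildePhi} by combining the global well-posedness of the decoupled FBSDE \eqref{EFBSDEt0} with an analysis of its linearized system \eqref{nablaEFBSDEt0}, following the classical program of Delarue \cite{Delarue2002} and Ma--Protter--Yong \cite{MaProtterYong1994}. For part (i), I would first observe that the transform \eqref{transform} identifies \eqref{EFBSDEt0} (with initial condition $\nu_0$ at time $t_0$, rather than $\eta_0\mathbb E[\xi]$ at time $0$) with exactly the same structural class of FBSDE as \eqref{EFBSDE} in the proof of Theorem \ref{NCEwellposed}. Since $\sigma_0>0$, the forward equation is uniformly non-degenerate in its common-noise component; the drift and terminal coefficients are uniformly Lipschitz by Assumptions (A), (B) (using the bound \eqref{eq:eta} on $\eta$ and the Lipschitz bound \eqref{eq:rho'} on $\rho$, hence on $\widetilde h,\widetilde b$). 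Therefore \cite[Theorem 2.6]{Delarue2002} applies on $[t_0,T]$ for every $(t_0,\nu_0)$, giving a unique strong solution $(\widetilde\nu^{t_0,\nu_0},\widetilde\varphi^{t_0,\nu_0},\widetilde\Lambda^{0,t_0,\nu_0})$ together with a Lipschitz decoupling field; this also yields that $\widetilde\Phi(t_0,\nu_0):=\widetilde\varphi_{t_0}^{t_0,\nu_0}$ is well-defined and Lipschitz in $\nu_0$, uniformly in $t_0$, and that $\widetilde\varphi_t^{t_0,\nu_0}=\widetilde\Phi(t,\widetilde\nu_t^{t_0,\nu_0})$ for $t\in[t_0,T]$.

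For part (ii) I would proceed in three stages. First, establish first-order differentiability: show the map $\nu_0\mapsto(\widetilde\nu^{t_0,\nu_0},\widetilde\varphi^{t_0,\nu_0})$ is differentiable with derivative solving the linear FBSDE \eqref{nablaEFBSDEt0}. Since the coefficients of \eqref{EFBSDEt0} are $C^1$ in $(\nu,\varphi)$ under Assumption (C) (note $\widetilde h=h\circ\widetilde k$, $\widetilde b=b\circ\widetilde k$ with $\widetilde k$ given by \eqref{tildek}, and $\rho$ is $C^1$ by Assumption (B) plus the inverse function theorem, with $h,b\in C^2$), the variational FBSDE \eqref{nablaEFBSDEt0} is linear with bounded coefficients, and it is uniquely solvable because it inherits the monotonicity-free non-degenerate structure — more directly, it is a linear FBSDE whose solvability follows from the Lipschitz bound on the decoupling field $\widetilde\Phi$ (one shows $\nabla\widetilde\varphi_t=\partial_\nu\widetilde\Phi(t,\widetilde\nu_t)\nabla\widetilde\nu_t$). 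A standard stability/difference-quotient argument (estimating $\widetilde\nu^{t_0,\nu_0+\varepsilon}-\widetilde\nu^{t_0,\nu_0}$ against the linearized solution in $L^2$) gives $\partial_{\nu_0}\widetilde\varphi_{t_0}^{t_0,\nu_0}=\nabla\widetilde\varphi_{t_0}^{t_0,\nu_0}$, i.e. $\partial_\nu\widetilde\Phi(t_0,\nu_0)=\nabla\widetilde\varphi_{t_0}^{t_0,\nu_0}$, which is bounded uniformly by $1/\varepsilon_0$-type constants coming from \cite[Theorem 2.6]{Delarue2002}. Second, iterate: differentiate \eqref{nablaEFBSDEt0} once more in $\nu_0$ (legitimate since all coefficients are $C^1$ in $(\nu,\varphi)$, i.e. the original data are $C^2$), obtaining a second linear FBSDE for $(\nabla^2\widetilde\nu,\nabla^2\widetilde\varphi)$ with bounded coefficients and a source term quadratic in the already-bounded first derivatives; its well-posedness gives $\partial_{\nu\nu}\widetilde\Phi$ bounded, and the same difference-quotient argument identifies it as the genuine second derivative. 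Third, time-regularity and the PDE: having $\widetilde\Phi$ continuous with bounded $\partial_\nu\widetilde\Phi,\partial_{\nu\nu}\widetilth\Phi$, apply Itô's formula to $\widetilde\Phi(t,\widetilde\nu_t^{t_0,\nu_0})$ and compare with the dynamics of $\widetilde\varphi_t^{t_0,\nu_0}=\widetilde\Phi(t,\widetilde\nu_t^{t_0,\nu_0})$; matching the $dt$-terms forces $\widetilde\Phi$ to satisfy \eqref{tildePhi} in the a.e.\ (distributional) sense, and since the elliptic part is non-degenerate ($\eta_t^2\sigma_0^2>0$) with Hölder coefficients, parabolic Schauder theory upgrades $\widetilde\Phi$ to $C^{1,2}$, so it is a classical solution; uniqueness of the classical solution follows from the uniqueness of \eqref{EFBSDEt0} via the nonlinear Feynman--Kac representation (any classical solution generates a solution of the FBSDE by closing the loop, and the solution is unique).

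I would replace ``$\partial_{\nu\nu}\widetilth\Phi$'' above by ``$\partial_{\nu\nu}\widetilde\Phi$'' (typo) and otherwise keep the structure. The main obstacle I anticipate is the rigorous justification that the decoupling field is twice differentiable with \emph{bounded} second derivative: one must propagate uniform (in $t_0$, in $\nu_0$, and crucially in the horizon $T$) estimates through two successive linearizations. The first-order bound is essentially the Lipschitz constant of the decoupling field from \cite{Delarue2002}; the second-order bound is subtler because the linearized forward equation \eqref{nablaEFBSDEt0} has no diffusion term (the $\nabla\widetilde\nu$ equation is a random ODE), so smoothing has to come entirely from the coupling with the backward component, and one must verify that the quadratic source term built from first derivatives does not destroy the a priori bound. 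Concretely, I would control $\nabla\widetilde\Phi$ and $\nabla^2\widetilde\Phi$ by differentiating the PDE \eqref{tildePhi} and running a maximum-principle / Gronwall argument on the linear parabolic equations satisfied by $\partial_\nu\widetilde\Phi$ and $\partial_{\nu\nu}\widetilde\Phi$, exploiting that the ``bad'' sign terms are dominated because $R>0$, $P_t$ is bounded, and the coefficients are globally Lipschitz — this is where the absence of any monotonicity requirement is paid for by the non-degeneracy $\sigma_0>0$. The remaining steps (existence and uniqueness of solutions to the linear FBSDEs, difference-quotient convergence, Itô's formula, Schauder upgrade) are standard once these uniform bounds are in hand.
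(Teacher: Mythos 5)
Your proposal is correct and follows essentially the same route as the paper: well-posedness of \eqref{EFBSDEt0} and the uniform Lipschitz bound on the decoupling field from \cite[Theorem 2.6, Corollary 2.8]{Delarue2002}, differentiation of the linearized system \eqref{nablaEFBSDEt0} (once for $\partial_\nu\widetilde\Phi$, again for $\partial_{\nu\nu}\widetilde\Phi$), verification of the PDE via It\^o's formula, and uniqueness by closing the loop and invoking uniqueness of the FBSDE. The only point the paper makes more explicit than you do is how global solvability of the linear FBSDE \eqref{nablaEFBSDEt0} is obtained, namely by a backward-in-time patching argument over intervals of length $\delta_0$ using the uniform bound $\|\partial_\nu\widetilde\Phi\|_{L^\infty}\leq\widetilde C_0$ as the terminal-data Lipschitz constant at each step; this is exactly the mechanism you gesture at when you say the solvability "follows from the Lipschitz bound on the decoupling field."
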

\begin{proof}
(i) Note that the coefficients in \eqref{EFBSDEt0} meet the assumptions in \cite[Theorem 2.6]{Delarue2002}. Therefore, the FBSDE \eqref{EFBSDEt0} is well-posed for any $(t_0,\nu_0)\in [0,T]\times\mathbb R$.\\
(ii) \textbf{Existence:} Define $\widetilde \Phi(t_0,\nu_0)=\widetilde{\varphi}_{t_0}^{t_0,\nu_0}$ for any $(t_0,\nu_0)\in[0,T]\times\mathbb R$. Applying \cite[Corollary 2.8]{Delarue2002}, there exists some $\widetilde{C}_0>0$ such that $\|\partial_{\nu}\widetilde\Phi\|_{L^\infty([0,T]\times\mathbb R)}\leq \widetilde C_0$.

Now we would like to show that $\partial_{\nu}\widetilde\Phi,\partial_{\nu\nu}\widetilde\Phi\in C^0([0,T]\times\mathbb R)$ are bounded. Let us consider the linear FBSDE \eqref{nablaEFBSDEt0} on $[t_0,T]$ for any $t_0\in[0,T)$. Note that Assumptions (A), (B), (C) imply that all the coefficients in FBSDE \eqref{nablaEFBSDEt0} are bounded by some chosen $\widetilde C_1\geq \widetilde C_0$. By standard FBSDE arguments (see e.g. \cite{Zhang2017}), there exists some $\delta_0>0$ depending on $\widetilde C_1>0$ such that the FBSDE \eqref{nablaEFBSDEt0} is well-posed on $[T-\delta_0,T]$. We then consider the FBSDE \eqref{nablaEFBSDEt0} with $\widetilde{g}(\cdot)$ replaced by $\widetilde\Phi(T-\delta_0,\cdot)$. Since $\|\partial_{\nu}\widetilde\Phi(\cdot,\cdot)\|_{L^\infty([0,T]\times\mathbb R)}\leq \widetilde C_1$, the FBSDE \eqref{nablaEFBSDEt0} is also well-posed on $[T-2\delta_0,T-\delta_0]$. After finite steps, we are able to show that the FBSDE \eqref{nablaEFBSDEt0} is well-posed on $[t_0,T]$. It implies that $\partial_{\nu}\widetilde{\Phi}\in C^0([0,T]\times\mathbb R)$. By differentiating \eqref{nablaEFBSDEt0} in $\nu_0$ and using $\|\partial_{\nu}\widetilde\Phi\|_{L^\infty([0,T]\times\mathbb R)}\leq \widetilde C_0$, we can further show that $\partial_{\nu\nu}\widetilde \Phi\in C^0([0,T]\times\mathbb R)$ and there exists $\widetilde C_2>0$ such that $\|\partial_{\nu\nu}\widetilde \Phi\|_{L^\infty([0,T]\times\mathbb R)}\leq \widetilde{C}_2$.

With the uniform boundedness of $\partial_{\nu}\widetilde \Phi$ and the continuity of $\partial_{\nu}\widetilde \Phi,\partial_{\nu\nu}\widetilde \Phi$, it is rather standard to verify that $\widetilde\Phi\in C^{1,2}([0,T]\times\mathbb R)$ and it satisfies the PDE \eqref{tildePhi}.

\textbf{Uniqueness:} Suppose that $\bar\Phi\in C^{1,2}([0,T]\times\mathbb R)$ is another classical solution to \eqref{tildePhi} with bounded $\partial_\nu\bar\Phi$. For any $(t_0,\nu_0)\in[0,T]\times\mathbb R$, we first consider the following well-posed stochastic differential equation (SDE)
\begin{equation*}
 \left\{
 \begin{aligned}
d\bar{\nu}_t^{t_0,\nu_0}=&~\Big[-\eta_t^2B^2R^{-1}\bar\Phi(t,\bar{\nu}_t^{t_0,\nu_0})-\eta_tB\widetilde h(t,\bar{\nu}_t^{t_0,\nu_0},\bar\Phi(t,\bar{\nu}_t^{t_0,\nu_0}))\\
 &+\eta_tf(\eta_t^{-1}\bar{\nu}_t^{t_0,\nu_0})+\eta_t\widetilde b(t,\bar{\nu}_t^{t_0,\nu_0},\bar\Phi(t,\bar{\nu}_t^{t_0,\nu_0}))\Big]dt+\eta_t\sigma_0dW^0_t,\\
\bar\nu_{t_0}^{t_0,\nu_0}=&~\eta_0\nu_0.
  \end{aligned}
 \right.
 \end{equation*}
 Let $\bar\varphi_t^{t_0,\nu_0}:=\bar\Phi(t,\bar\nu_t^{t_0,\nu_0})$. Since $\bar\Phi$ is a classical solution to \eqref{tildePhi}, it can be easily checked that $(\bar\varphi^{t_0,\nu_0},\bar\Lambda^{0,t_0,\nu_0})$ solves the BSDE in \eqref{EFBSDEt0} where $\bar\Lambda_t^{0,t_0,\nu_0}=\eta_t\sigma_0\partial_\nu \bar\Phi(t,\bar{\nu}_t^{t_0,\nu_0})$. Therefore we have verified that $(\bar\nu^{t_0,\nu_0},\bar\varphi^{t_0,\nu_0},\bar\Lambda^{t_0,\nu_0})$ is a strong solution to FBSDE \eqref{EFBSDEt0}. Therefore, the uniqueness result follows by the well-posedness of the FBSDE \eqref{EFBSDEt0}.
\end{proof}
\begin{theorem}\label{thm:Phi}
Let Assumptions (A), (B), (C) hold. Then the PDE \eqref{Phi} admits a unique classical solution $\Phi$ with bounded $\partial_\nu\Phi,\partial_{\nu\nu}\Phi$.
\end{theorem}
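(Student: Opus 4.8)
The plan is to reduce Theorem \ref{thm:Phi} to Theorem \ref{thm:tildePhi} through the linear, time-dependent rescaling \eqref{transform} that already linked the NCE system \eqref{NCE} to the FBSDE \eqref{EFBSDE}. Concretely, I would take $\widetilde g = Gg$ in \eqref{EFBSDEt0}: by Assumption (C) this function is $C^2$, bounded, with bounded first and second derivatives, so Theorem \ref{thm:tildePhi} yields a unique classical solution $\widetilde\Phi\in C^{1,2}([0,T]\times\mathbb R)$ to \eqref{tildePhi} (with this choice of $\widetilde g$) having bounded $\partial_\nu\widetilde\Phi$ and $\partial_{\nu\nu}\widetilde\Phi$.

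For existence, I would set $\Phi(t,\nu):=\eta_t\,\widetilde\Phi(t,\eta_t\nu)$, with $\eta$ as in \eqref{eta}. Since $\eta\in C^1([0,T])$ and $\varepsilon_1\le\eta_t\le 1/\varepsilon_1$ by \eqref{eq:eta}, this gives $\Phi\in C^{1,2}([0,T]\times\mathbb R)$, and $\partial_\nu\Phi(t,\nu)=\eta_t^2\,\partial_\nu\widetilde\Phi(t,\eta_t\nu)$, $\partial_{\nu\nu}\Phi(t,\nu)=\eta_t^3\,\partial_{\nu\nu}\widetilde\Phi(t,\eta_t\nu)$ are bounded. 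The core computation is to check that $\Phi$ solves \eqref{Phi}. Here I would use two facts. First, $\dot\eta_t=-(A-B^2R^{-1}P_t)\eta_t$, which makes all the terms in \eqref{Phi} carrying the factor $(A-B^2R^{-1}P_t)$ cancel against the contributions to $\partial_t\Phi$ produced by differentiating $\eta_t$ and $\eta_t\nu$. Second, the identity $\widetilde k\bigl(t,\eta_t\nu,\eta_t^{-1}\varphi\bigr)=k(t,\nu,\varphi)$, immediate from \eqref{k} and \eqref{tildek}, which gives $\widetilde h(t,\eta_t\nu,\eta_t^{-1}\varphi)=h(k(t,\nu,\varphi))$ and $\widetilde b(t,\eta_t\nu,\eta_t^{-1}\varphi)=b(k(t,\nu,\varphi))$, so the nonlinearities transform correctly. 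After substituting and using $\eta_t^{-1}(\eta_t\nu)=\nu$, the left-hand side of \eqref{Phi} at $(t,\nu)$ becomes exactly $\eta_t$ times the left-hand side of \eqref{tildePhi} evaluated at $(t,\eta_t\nu)$, hence $0$; the terminal condition follows from $\eta_T=1$ and $\widetilde\Phi(T,\cdot)=Gg$.

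For uniqueness, given another classical solution $\bar\Phi$ to \eqref{Phi} with bounded $\partial_\nu\bar\Phi,\partial_{\nu\nu}\bar\Phi$, I would run the transformation in reverse: $\bar{\widetilde\Phi}(t,\nu):=\eta_t^{-1}\bar\Phi(t,\eta_t^{-1}\nu)$ is then a $C^{1,2}$ classical solution of \eqref{tildePhi} (with $\widetilde g=Gg$) with bounded $\partial_\nu\bar{\widetilde\Phi}$, so the uniqueness part of Theorem \ref{thm:tildePhi} forces $\bar{\widetilde\Phi}=\widetilde\Phi$, and therefore $\bar\Phi=\Phi$.

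The only genuinely delicate point is the chain-rule bookkeeping in the existence step: tracking the $\eta_t^2,\eta_t^3$ weights on the derivative terms and, above all, verifying that the $(A-B^2R^{-1}P_t)$-linear terms cancel thanks to the ODE satisfied by $\eta$. Everything else (the $C^{1,2}$ regularity and boundedness of the derivatives of $\Phi$, and the reverse transformation in the uniqueness argument) is routine once Theorem \ref{thm:tildePhi} is available.
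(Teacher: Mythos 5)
Your proposal is correct and follows essentially the same route as the paper: reduce \eqref{Phi} to \eqref{tildePhi} via the $\eta$-rescaling and invoke Theorem \ref{thm:tildePhi}, with your choice $\widetilde g=Gg$ agreeing with the paper's $\widetilde g(\nu)=\eta_TGg(\eta_T\nu)$ since $\eta_T=1$. You actually carry out the chain-rule verification that the paper calls ``easily checked,'' and your formula $\Phi(t,\nu)=\eta_t\widetilde\Phi(t,\eta_t\nu)$ is the direction consistent with the transform \eqref{transform} (equivalently $\widetilde\Phi(t,\nu)=\eta_t^{-1}\Phi(t,\eta_t^{-1}\nu)$, which is what the paper's displayed identity should read).
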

\begin{proof} It can be easily checked that the equations \eqref{Phi} and \eqref{tildePhi} with $\widetilde{g}(\nu)=\eta_TGg(\eta_T\nu)$ are equivalent, i.e. $\Phi(t,\nu):=\eta_{t}^{-1}\widetilde\Phi(t,\eta_t^{-1}\nu)$ is a solution to \eqref{Phi} if and only if $\widetilde \Phi(t,\nu)$ is a solution to \eqref{tildePhi} with $\widetilde{g}(\nu)=\eta_TGg(\eta_T\nu)$. Then, by Theorem \ref{thm:tildePhi}, we obtain 
the PDE \eqref{Phi} admits a unique classical solution $\Phi$. Together with \eqref{eq:eta}, the boundedness of $\partial_\nu\widetilde\Phi,\partial_{\nu\nu}\widetilde\Phi$ implies the boundedness of $\partial_\nu\Phi,\partial_{\nu\nu}\Phi$.
\end{proof}

\begin{theorem}\label{thm:vecmaster}
Let Assumptions (A), (B), (C) hold and let $\Phi$ be given in Theorem \ref{thm:Phi}. Then 
\begin{equation}\label{eq:solvecmaster}
U(t,x,\nu):=P_tx+\Phi(t,\nu)
\end{equation} is the unique classical solution to the vectorial master equation \eqref{eq:vecmaster} with bounded $\partial_x U,\partial_{\nu}U,\partial_{\nu\nu}U$.
\end{theorem}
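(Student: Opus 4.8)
The plan is to verify directly that the ansatz $U(t,x,\nu)=P_tx+\Phi(t,\nu)$ solves the vectorial master equation \eqref{eq:vecmaster} by substituting it in and separating the $x$-linear part from the $\nu$-dependent part. First I would record the partial derivatives of $U$: $\partial_tU=\dot P_tx+\partial_t\Phi$, $\partial_xU=P_t$, $\partial_{xx}U=0$, $\partial_\nu U=\partial_\nu\Phi$, $\partial_{\nu\nu}U=\partial_{\nu\nu}\Phi$, and $\partial_{x\nu}U=0$. I would also note the identity $\mathbb E[U(t,\xi,\nu)]=P_t\mathbb E[\xi]+\Phi(t,\nu)=P_t\nu+\Phi(t,\nu)$ (using $\mathbb E[\xi]=\nu$), so that the measure-dependent nonlinear terms $\rho\big(-R^{-1}B\,\mathbb E[U(t,\xi,\nu)]\big)$ appearing in \eqref{eq:vecmaster} become exactly $\rho\big(-R^{-1}B(P_t\nu+\Phi(t,\nu))\big)=k(t,\nu,\Phi(t,\nu))$ by the definition \eqref{k} of $k$. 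This is the key simplification that lets the nonlocal master equation collapse onto the finite-dimensional PDE \eqref{Phi}.

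Next I would plug these into \eqref{eq:vecmaster} and collect terms. The terms linear in $x$ are $\dot P_tx+P_t\big(Ax-B^2R^{-1}P_tx\big)+Qx=\big(\dot P_t+2AP_t-B^2R^{-1}P_t^2+Q\big)x$, which vanishes precisely because $P$ solves the Riccati equation \eqref{P}. (One must be a little careful: $\partial_xU\cdot[-B^2R^{-1}U]=P_t\cdot(-B^2R^{-1})(P_tx+\Phi)$, so this contributes $-B^2R^{-1}P_t^2x$ to the $x$-linear part and $-B^2R^{-1}P_t\Phi$ to the remaining part; likewise $AU=AP_tx+A\Phi$ splits accordingly.) After cancelling the $x$-linear part, what remains is exactly the left-hand side of \eqref{Phi} evaluated at $(t,\nu,\Phi(t,\nu))$, which is zero since $\Phi$ is by Theorem \ref{thm:Phi} a classical solution of \eqref{Phi}. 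The terminal condition matches because $U(T,x,\nu)=P_Tx+\Phi(T,\nu)=Gx+Gg(\nu)=G(x+g(\nu))$. Regularity of $U$ is immediate: $P\in C^1$ is bounded by Theorem \ref{closeloop}(i) and $\Phi\in C^{1,2}$ with bounded $\partial_\nu\Phi,\partial_{\nu\nu}\Phi$ by Theorem \ref{thm:Phi}, so $U\in C^{1,2,2}$ with $\partial_xU=P_t$ bounded and $\partial_\nu U=\partial_\nu\Phi$, $\partial_{\nu\nu}U=\partial_{\nu\nu}\Phi$ bounded.

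For uniqueness, I would argue via the stochastic Hamiltonian system rather than a PDE comparison principle, since the equation is not monotone. Suppose $\bar U\in C^{1,2,2}$ is another classical solution of \eqref{eq:vecmaster} with bounded $\partial_xU,\partial_\nu U,\partial_{\nu\nu}U$. For fixed $(t_0,\nu_0)$ one runs the forward equation \eqref{fstate*}/\eqref{nu*}-type coupled system with the feedback $y_t=\bar U(t,x_t,\nu_t)$, $\mu_t=k(t,\nu_t,\bar U(t,\nu_t,\nu_t))$ — more precisely one solves the forward SDEs for $(x^\xi_t,\nu_t)$ with the measure-component frozen through $\mathbb E[\bar U(t,\xi,\nu_t)]=\bar U(t,\nu_t,\nu_t)$ (using $\mathbb E[\xi]=\nu_0$ and that $\nu_t=\mathbb E[x^\xi_t|\mathcal F^{W^0}_t]$ is $\mathbb F^0$-adapted), set $y_t:=\bar U(t,x^\xi_t,\nu_t)$, $z_t:=\partial_x\bar U\,\sigma$, $z^0_t:=\partial_x\bar U\,\sigma_0+\partial_\nu\bar U\,\sigma_0$, and apply Itô's formula together with the PDE \eqref{eq:vecmaster} to check that $(x^\xi,y,z,z^0)$ solves the stochastic Hamiltonian system \eqref{H} with the consistent measure flow — i.e. $\nu$ solves the NCE forward equation. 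By the global well-posedness of the stochastic Hamiltonian system (Theorem \ref{closeloop}) and of the NCE system (Theorem \ref{NCEwellposed}), this solution is unique, hence $\bar U(t_0,\nu_0,\nu_0)$ and more generally $\bar U(t_0,x_0,\nu_0)=y^\xi_{t_0}$ is determined; since $(t_0,x_0,\nu_0)$ is arbitrary, $\bar U=U$. The main obstacle is precisely this uniqueness step: one has to make sure the decoupling identification goes through with the measure argument being the $\mathbb F^0$-conditional mean — the consistency $\mathbb E[\bar U(t,\xi,\nu_t)]=\bar U(t,\nu_t,\nu_t)$ holds only because of the affine-in-$x$ structure forced on any such solution, which itself needs a short argument (e.g. differentiating the equation in $x$ shows $\partial_x\bar U$ is spatially constant and solves the Riccati equation, so $\partial_x\bar U=P_t$, whence $\bar U$ is affine in $x$). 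I would handle that first, and the rest follows from the already-established well-posedness results.
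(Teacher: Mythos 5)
Your proposal is correct and follows essentially the same route as the paper: existence by direct substitution of the ansatz $U(t,x,\nu)=P_tx+\Phi(t,\nu)$, cancelling the $x$-linear part via the Riccati equation \eqref{P} and the remainder via the PDE \eqref{Phi}, and uniqueness by identifying any solution as the decoupling field of the stochastic Hamiltonian system \eqref{H} and invoking its well-posedness. Your additional remark that one must first justify the affine-in-$x$ structure of a competitor solution (so that $\mathbb E[\bar U(t,\xi,\nu)]$ depends on $\xi$ only through its mean) is a legitimate refinement of a step the paper's one-line uniqueness argument leaves implicit, but it does not change the method.
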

\begin{proof}
 \textbf{Existence:} We first check that $U$ satisfies the terminal condition
\begin{equation*}
U(T,x,\nu)=P_Tx+\Phi(T,\nu)=G(x+g(\nu)).
\end{equation*}
At the end we verify that $U$ indeed satisfies the vectorial master equation \eqref{eq:vecmaster}. Using \eqref{P} and \eqref{Phi}, we obtain
\begin{eqnarray*}
&&\partial_t U(t,x,\nu)=\dot{P}_tx+\partial_t\Phi(t,\nu)\\
&=&(-2AP_t+B^2R^{-1}P_t^2-Q)x-\partial_{\nu}\Phi(t,\nu)\big[(A-B^2R^{-1}P_t)\nu
-B^2R^{-1}\Phi(t,\nu)\\
&&-Bh(\rho(-R^{-1}BP_t\nu-R^{-1}B\Phi(t,\nu))+f(\nu)+b(\rho(-R^{-1}BP_t\nu-R^{-1}B\Phi(t,\nu)))\big]\\
&&-\frac{1}{2}\partial_{\nu\nu}\Phi(t,\nu)\sigma_0^2-(A-B^2R^{-1}P_t)\Phi(t,\nu)-P_tf(\nu)-P_tb\Big(\rho\big(-R^{-1}BP_t\nu-R^{-1}B\Phi(t,\nu)\big)\Big)\\
&&-Ql(\nu)-P_tBh\Big(\rho\big(-R^{-1}BP_t\nu-R^{-1}B\Phi(t,\nu)\big)\Big).
\end{eqnarray*}
Moreover, we have
\[
\partial_xU(t,x,\nu)=P_t,\quad\partial_{xx}U(t,x,\nu)=\partial_{x\nu}U(t,x,\nu)=0,\quad\partial_{\nu}U(t,x,\nu)=\partial_{\nu}\Phi(t,\nu),\quad\partial_{\nu\nu}U(t,x,\nu)=\partial_{\nu\nu}\Phi(t,\nu)
\]
are bounded. Plugging the above terms into \eqref{eq:vecmaster} and using \eqref{eq:solvecmaster}, it is straightforward to show that $U$ is a classical solution to the vectorial master equation \eqref{eq:vecmaster}.

\textbf{Uniqueness:} We remind that the solution $U$ to \eqref{eq:vecmaster} serves as the decoupling field of the stochastic Hamiltonian system \eqref{H}. Following the uniqueness argument in Theorem \ref{thm:tildePhi}, the well-posedness of the stochastic Hamiltonian system \eqref{H} implies the uniqueness of a solution to the vectorial master equation \eqref{eq:vecmaster}.
\end{proof}

\begin{theorem}
Let Assumptions (A), (B), (C) hold. Then the master equation \eqref{master} admits a unique classical solution $V$ with bounded $\partial_{xx}V,\partial_{x\nu}V,\partial_{x\nu\nu}V$.
\end{theorem}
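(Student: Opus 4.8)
The plan is to derive the master equation \eqref{master} from the vectorial master equation \eqref{eq:vecmaster} via the relation $\partial_x V = U$, in analogy with the way the value function of an LQ problem relates to the adjoint process. Concretely, I would first use Theorem \ref{thm:vecmaster} to get the unique classical solution $U(t,x,\nu)=P_t x+\Phi(t,\nu)$ to \eqref{eq:vecmaster}, and then define
\begin{equation*}
V(t,x,\nu):=\frac12 P_t x^2+\Phi(t,\nu)x+\Psi(t,\nu),
\end{equation*}
where $\Psi$ is to be chosen as the solution of a scalar backward PDE in $(t,\nu)$ obtained by matching the terms in \eqref{master} that do not involve $x$. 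By construction $\partial_x V(t,x,\nu)=P_t x+\Phi(t,\nu)=U(t,x,\nu)$, so $\mathbb{E}[\partial_x V(t,\xi,\nu)]=\mathbb{E}[U(t,\xi,\nu)]=P_t\nu+\Phi(t,\nu)$ (using $\mathbb{E}[\xi]=\nu$), and the nonlocal arguments $\rho(-R^{-1}B\mathbb{E}[\partial_x V(t,\xi,\nu)])=\rho(-R^{-1}B(P_t\nu+\Phi(t,\nu)))=k(t,\nu,\Phi(t,\nu))$ match exactly those appearing in \eqref{Phi}. Then I would substitute this ansatz into \eqref{master} and collect terms according to their power of $x$: the quadratic-in-$x$ terms vanish because $P$ solves the Riccati equation \eqref{P}; the linear-in-$x$ terms vanish because $\Phi$ solves \eqref{Phi}; and the remaining $x$-independent terms define the PDE for $\Psi$, which is linear in $\partial_t\Psi,\partial_\nu\Psi,\partial_{\nu\nu}\Psi$ with bounded coefficients (all dependence on $\Phi,\partial_\nu\Phi$ being already known and bounded by Theorem \ref{thm:Phi}) and terminal condition $\Psi(T,\nu)=\frac12 G g(\nu)^2$. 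Existence and uniqueness of a classical $\Psi$ then follows from standard linear parabolic theory, and $\partial_{xx}V=P_t$, $\partial_{x\nu}V=\partial_\nu\Phi$, $\partial_{x\nu\nu}V=\partial_{\nu\nu}\Phi$ are bounded by Theorems \ref{thm:Phi} and \ref{thm:vecmaster}.

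For uniqueness of $V$ in \eqref{master}, the key observation is that \eqref{master} only ever involves $V$ through its derivatives $\partial_x V,\partial_t V,\partial_{xx}V,\partial_\nu V,\partial_{\nu\nu}V,\partial_{x\nu}V$ — $V$ itself never appears — so $V$ is determined up to a function of time alone, which is then pinned down by the terminal condition; more usefully, if $V_1,V_2$ are two classical solutions with the stated bounded derivatives, then $W:=\partial_x V_1-\partial_x V_2$ is a classical solution of the vectorial master equation \eqref{eq:vecmaster} with zero terminal data (after differentiating \eqref{master} in $x$, which is legitimate given the assumed regularity). Here one must be slightly careful: differentiating \eqref{master} in $x$ produces the linear vectorial equation solved by $U$, and the nonlocal term linearizes correctly because $\partial_x$ commutes with the $x$-independent nonlocal coefficient. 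By the uniqueness part of Theorem \ref{thm:vecmaster}, $W\equiv 0$, i.e. $\partial_x V_1=\partial_x V_2=U$; feeding this back into \eqref{master} shows $V_1-V_2$ depends only on $(t,\nu)$ and solves the same scalar linear parabolic equation for $\Psi$ with zero terminal data, hence $V_1=V_2$.

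The main obstacle I anticipate is purely bookkeeping rather than conceptual: one has to verify carefully that the ansatz $V=\frac12 P_t x^2+\Phi(t,\nu)x+\Psi(t,\nu)$ reproduces \emph{every} term of \eqref{master}, in particular that the cross term $\partial_{x\nu}V(t,x,\nu)\sigma_0^2=\partial_\nu\Phi(t,\nu)\sigma_0^2$ and the term $\partial_\nu V(t,x,\nu)=x\,\partial_\nu\Phi(t,\nu)+\partial_\nu\Psi(t,\nu)$ distribute their $x$-linear and $x$-free parts to exactly the right places, and that the $-\frac12 B^2R^{-1}|\partial_x V|^2=-\frac12 B^2R^{-1}(P_tx+\Phi(t,\nu))^2$ term supplies precisely the $B^2R^{-1}P_t^2 x^2$, $B^2R^{-1}P_t\Phi$, and $B^2R^{-1}\Phi^2$ contributions needed to close the Riccati equation \eqref{P}, the equation \eqref{Phi}, and the $\Psi$-equation respectively. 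Once the collection-by-powers-of-$x$ identity is set up, the rest is a routine application of the earlier theorems and classical linear parabolic PDE theory; I would also double-check the regularity bootstrap for $\Psi$ (from $C^{1,2}$ up to whatever is needed so that $\partial_{x\nu\nu}V=\partial_{\nu\nu}\Phi$ is genuinely continuous and bounded, which is already guaranteed by Theorem \ref{thm:Phi} and does not require differentiating $\Psi$ three times).
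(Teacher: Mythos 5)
Your existence argument is correct but takes a genuinely different route from the paper. You substitute the explicit quadratic ansatz $V=\tfrac12 P_tx^2+\Phi(t,\nu)x+\Psi(t,\nu)$ into \eqref{master} and collect powers of $x$ (quadratic terms close via the Riccati equation \eqref{P}, linear terms via \eqref{Phi}, and the $x$-free remainder defines a linear nondegenerate parabolic equation for $\Psi$); the paper instead defines $V$ probabilistically as the optimal cost \eqref{V} along the trajectories driven by $U$ and verifies \eqref{master} by dynamic programming. Your route is more explicit and makes the boundedness of $\partial_{xx}V=P_t$, $\partial_{x\nu}V=\partial_\nu\Phi$, $\partial_{x\nu\nu}V=\partial_{\nu\nu}\Phi$ immediate from Theorem \ref{thm:Phi}; the paper's route identifies $V$ directly as the value function of the control problem. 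Both reduce to the same ingredients ($P$, $\Phi$, and a residual scalar linear equation), and your algebra checks out, including the identification $\mathbb E[\partial_xV(t,\xi,\nu)]=P_t\nu+\Phi(t,\nu)$ and hence $\rho(-R^{-1}B\mathbb E[\partial_xV])=k(t,\nu,\Phi(t,\nu))$.

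The uniqueness argument, however, has a genuine gap at the step where you differentiate \eqref{master} in $x$ for an arbitrary competitor solution and invoke the uniqueness part of Theorem \ref{thm:vecmaster}. For $\bar U:=\partial_x\bar V$ to be a \emph{classical} solution of the vectorial master equation \eqref{eq:vecmaster} you need $\partial_x\bar U=\partial_{xx}\bar V$, $\partial_{xx}\bar U=\partial_{xxx}\bar V$, $\partial_{x\nu}\bar U=\partial_{xx\nu}\bar V$ and $\partial_t\bar U=\partial_{tx}\bar V$ to exist and be continuous; the hypotheses only provide $\bar V\in C^{1,2,2}$ with bounded $\partial_{xx}\bar V,\partial_{x\nu}\bar V,\partial_{x\nu\nu}\bar V$, so the third-order and mixed time derivatives are not available and the classical differentiation is not legitimate. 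This is precisely the obstruction the paper flags (``we cannot directly utilize the It\^o's formula \dots since $\bar U$ is not regular enough''): it circumvents it by mollifying $\bar U$ in $(x,\nu)$, showing the mollifications solve perturbed vectorial master equations and generate perturbed stochastic Hamiltonian systems, passing to the limit to conclude that $\bar U$ is the decoupling field of \eqref{H}, and then using the \emph{FBSDE} uniqueness of \eqref{H} rather than PDE uniqueness of \eqref{eq:vecmaster} to get $\bar U=U$. Your final step (once $\partial_xV_1=\partial_xV_2$, the difference depends only on $(t,\nu)$, solves a linear parabolic equation with zero terminal data, hence vanishes) agrees with the paper's equation \eqref{c} and is fine; it is the identification $\partial_x\bar V=U$ that needs the mollification argument rather than a direct differentiation of the equation.
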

\begin{proof}
 \textbf{Existence:}  Let $(t_0,x_0,\nu_0)\in[0,T]\times\mathbb R\times \mathbb R$ and let $U$ be the unique classical solution to the vectorial master equation \eqref{eq:vecmaster} given in Theorem \ref{thm:vecmaster}. For any $\xi\in L^2_{\mathcal{F}_{t_0}}$ with $\mathbb E[\xi]=\nu_0$, the SDE
\begin{equation}\label{eq:nv*}
\left\{
\begin{aligned}
dx_t^{*,t_0,\xi}=&~[Ax_t^{*,t_0,\xi}-B^2R^{-1}U(t,x_t^{*,t_0,\xi},\nu_t^{*,t_0,\xi})-Bh(\mu_t^{*,t_0,\xi})+f(\nu_t^{*,t_0,\xi})+b(\mu_t^{*,t_0,\xi})]dt\\
&+\sigma dW_t+\sigma_0dW^0_t,\\
x_{t_0}^{*,t_0,\xi}=&~\xi,
\end{aligned}
\right.
\end{equation}
is well-posed on $[t_0,T]$ where
\begin{equation}\label{munut0}
\mu_t^{*,t_0,\xi}:=\rho\big(-R^{-1}B\mathbb E\big[U(t,x_t^{*,t_0,\xi},\nu_t^{*,t_0,\xi})|\mathcal{F}_t^{W^0}\big]\big)\quad\text{and}\quad \nu_t^{*,t_0,\xi}:=\mathbb E\big[x_t^{*,t_0,\xi}|\mathcal{F}_t^{W^0}\big].
\end{equation}
Given $(\mu^{*,t_0,\xi},\nu^{*,t_0,\xi})$ above, the SDE: for any $x_0\in\mathbb R$
\begin{equation}\label{eq:nvx*}
\left\{
\begin{aligned}
dx_t^{*,t_0,x_0,\xi}=&~[Ax_t^{*,t_0,x_0,\xi}-B^2R^{-1}U(t,x_t^{*,t_0,x_0,\xi},\nu_t^{*,t_0,\xi})-Bh(\mu_t^{*,t_0,\xi})+f(\nu_t^{*,t_0,\xi})+b(\mu_t^{*,t_0,\xi})]dt\\
&+\sigma dW_t+\sigma_0dW^0_t,\\
x_{t_0}^{*,t_0,x_0,\xi}=&~x_0,
\end{aligned}
\right.
\end{equation}
is also well-posed on $[t_0,T]$. Then we can define
\begin{equation}\label{V}
\begin{aligned}
&V(t_0,x_0,\nu_0)\\
:=&\frac{1}{2}\mathbb{E}\Big\{\int_{t_0}^T\Big[Q\left(x_t^{*,t_0,x_0,\xi}+l(\nu_t^{*,t_0,\xi})\right)^2+R\left(\alpha_t^{*,t_0,x_0,\xi}+h(\mu_t^{*,t_0,\xi})\right)^2\Big]dt+G\left(x_T^{*,t_0,x_0,\xi}+g(\mu_T^{*,t_0,\xi})\right)^2\Big\}
\end{aligned}
\end{equation}
where
\[
\alpha^{*,t_0,x_0,\xi}_t:=-BR^{-1}U(t,x_t^{*,t_0,x_0,\xi},\nu_t^{*,t_0,\xi})-h(\mu_t^{*,t_0,\xi}).
\]
We note that $U\in C^{1,2,2}([0,T]\times\mathbb R\times\mathbb R)$ implies $V\in C^{1,2,2}([0,T]\times\mathbb R\times\mathbb R)$. By the derivation of $U$, we know that
\begin{equation*}
\begin{aligned}
&V(t_0,x_0,\nu_0)\\
=&\underset{\alpha\in\mathcal{U}_{ad}(t_0,T)}{\inf}\frac{1}{2}\mathbb{E}\Big\{\int_{t_0}^T\Big[Q\left(x_t^{t_0,x_0,\xi,\alpha}+l(\nu_t^{*,t_0,\xi})\right)^2+R\left(\alpha_t+h(\mu_t^{*,t_0,\xi})\right)^2\Big]dt+G\left(x_T^{t_0,x_0,\xi,\alpha}+g(\mu_T^{*,t_0,\xi})\right)^2\Big\}
\end{aligned}
\end{equation*}
where
\[
\left\{
\begin{aligned}
dx_t^{t_0,x_0,\xi,\alpha}=&~[Ax_t^{t_0,x_0,\xi,\alpha}+B\alpha_t+f(\nu_t^{*,t_0,\xi})+b(\mu_t^{*,t_0,\xi})]dt+\sigma dW_t+\sigma_0dW^0_t,\\
x_{t_0}^{t_0,x_0,\xi,\alpha}=&~x_0.
\end{aligned}
\right.
\]

Following the standard dynamic programming arguments, it is straightforward to check that $V$ satisfies the master equation \eqref{master} and $\partial_xV(t,x,\nu)=U(t,x,\nu)$.

 \textbf{Uniqueness:} Suppose that $\bar V\in C^{1,2,2}([0,T]\times\mathbb R\times\mathbb R)$ is another classical solution to the master equation \eqref{master} with bounded $\partial_{xx}\bar V,\partial_{x\nu}\bar V$. 
 
We claim that $\bar U:=\partial_{x}\bar V$ is the decoupling field of the stochastic Hamiltonian system \eqref{H}. Let
\begin{equation}\label{eq:barxxx}
\left\{
\begin{aligned}
d\bar x_t^{*,t_0,\xi}=&~[A\bar x_t^{*,t_0,\xi}-B^2R^{-1}\bar U(t,\bar x_t^{*,t_0,\xi},\bar \nu_t^{*,t_0,\xi})-Bh(\bar \mu_t^{*,t_0,\xi})+f(\bar\nu_t^{*,t_0,\xi})+b(\bar\mu_t^{*,t_0,\xi})]dt\\
&+\sigma dW_t+\sigma_0dW^0_t,\\
\bar x_{t_0}^{*,t_0,\xi}=&~\xi
\end{aligned}
\right.
\end{equation}
 where
\[
\bar\mu_t^{*,t_0,\xi}:=\rho\big(-R^{-1}B\mathbb E\big[\bar U(t,\bar x_t^{*,t_0,\xi},\nu_t^{*,t_0,\xi})|\mathcal{F}_t^{W^0}\big]\big)\quad\text{and}\quad \bar\nu_t^{*,t_0,\xi}:=\mathbb E\big[\bar x_t^{*,t_0,\xi}|\mathcal{F}_t^{W^0}\big].
\]
We then let 
\begin{equation}\label{eq:baryzz}
\begin{aligned}
&\bar y_t^{*,t_0,\xi}:=\bar U(t,\bar x_t^{*,t_0,\xi}, \bar \nu_t^{*,t_0,\xi}),\quad \bar z_t^{*,t_0,\xi}:=\sigma\partial_x\bar U(t,\bar x_t^{*,t_0,\xi},\bar \nu_t^{*,t_0,\xi}),\\
& \bar z_t^{0,*,t_0,\xi}:=\sigma_0[\partial_x\bar U(t,\bar x_t^{*,t_0,\xi},\bar\nu_t^{*,t_0,\xi})+\partial_\nu U(t,\bar x_t^{*,t_0,\xi},\bar\nu_t^{*,t_0,\xi})]. 
\end{aligned}
\end{equation}
It can be verified that $(\bar x^{*,t_0,\xi},\bar y^{*,t_0,\xi},\bar z^{*,t_0,\xi},\bar z^{0,*,t_0,\xi})$ is a strong solution to the stochastic Hamiltonian system \eqref{H} given $(\bar \mu^{*,t_0,\xi}, \bar \nu^{*,t_0,\xi})$. We cannot directly utilize the It\^o's formula and then use the vectorial master equation \eqref{eq:vecmaster} to obtain the verification since $\bar U$ is not regular enough. However, a perturbation argument can be applied to overcome the difficulty and we sketch the proof in the following. We first use the standard smooth mollifier to mollify the function $\bar U$ in $x$ and $\nu$ variables to obtain a sequence of smooth functions $\{\bar U_n\}_n$ with uniformly bounded $\partial_x \bar U_n$ and $\partial_{\nu} \bar U_n$. Since $\bar V$ satisfies the master equation \eqref{master}, we can show $\bar U_n$ satisfies a perturbed vectorial master equation. We can then define $\bar x_n^{*,t_0,\xi}$ and $(\bar y_n^{*,t_0,\xi}, \bar z_n^{*,t_0,\xi},\bar z_n^{0,*,t_0,\xi})$ as the ones in \eqref{eq:barxxx} and \eqref{eq:baryzz}, respectively, however using $\bar U_n$ instead of $\bar U$. It can be shown that $(\bar x_n^{*,t_0,\xi},\bar y_n^{*,t_0,\xi}, \bar z_n^{*,t_0,\xi},\bar z_n^{0,*,t_0,\xi})$ is a strong solution to a perturbed stochastic Hamiltonian system. By the properties of the sequence $\{\bar U_n\}_n$, we are able to show that $(\bar x_n^{*,t_0,\xi},\bar y_n^{*,t_0,\xi}, \bar z_n^{*,t_0,\xi},\bar z_n^{0,*,t_0,\xi})$ converges to $(\bar x^{*,t_0,\xi},\bar y^{*,t_0,\xi}, \bar z^{*,t_0,\xi},\bar z^{0,*,t_0,\xi})$ and, moreover, $(\bar x^{*,t_0,\xi},\bar y^{*,t_0,\xi}, \bar z^{*,t_0,\xi},\bar z^{0,*,t_0,\xi})$ is the strong solution to the stochastic Hamiltonian system \eqref{H} given $(\bar \mu^{*,t_0,\xi}, \bar \nu^{*,t_0,\xi})$. By the unique solvability of the stochastic Hamiltonian system \eqref{H}, we derive that $U(t,x,\nu)=\bar U(t,x,\nu)$ and thus $\partial_xV(t,x,\nu)=\partial_x\bar V(t,x,\nu)$ for any $(t,x,\nu)\in [0,T]\times\mathbb R\times\mathbb R$. Therefore, there exists a function $c:[0,T]\times\mathbb R\to\mathbb R$ such that $V(t,x,\nu)=\bar V(t,x,\nu)+c(t,\nu)$ for any $(t,x,\nu)\in [0,T]\times\mathbb R\times\mathbb R$. Since both $V$ and $\bar V$ satisfy the master equation \eqref{master}, we can check that $c$ satisfies
   \begin{equation}\label{c}
 \left\{
 \begin{aligned}
&\partial_tc(t,\nu) +\partial_{\nu}c(t,\nu)A\nu+\frac{1}{2}\partial_{\nu\nu}c(t,\nu)\sigma_0^2=0,\\
 &c(T,\nu)=0.
 \end{aligned}
 \right.
 \end{equation}
 Then, by the comparison principle (see e.g. \cite{CIL1994}), we have $c(t,\nu)=0$ for any $(t,\nu)\in [0,T]\times\mathbb R$. Therefore $V(t,x,\nu)=\bar V(t,x,\nu)$ for any $(t,x,\nu)\in [0,T]\times\mathbb R\times\mathbb R$. Therefore, the uniqueness result for the master equation \eqref{master} follows.
\end{proof}

\section{$N$-player game}

Following the same idea in the Problem (MF), we shall first characterize the optimal control for each player in the Problem (NP). In this section, we need $f=b=0$ and $\sigma>0$. 



Suppose that $\boldsymbol{\alpha}^*:=(\alpha^{*,1},\ldots,\alpha^{*,N})$ is the Nash equilibrium (NE) for the Problem (NP). Thus, for any i.i.d. $\xi^i\in L_{\widetilde{\mathcal{F}}_0}^2$, $1\leq i\leq N$, the optimal path of the $i$th player is given by
 \begin{equation}
 \left\{
 \begin{aligned}\label{x*i}
dx^{*,i}_t=&~[Ax^{*,i}_t+B\alpha^{*,i}_t]dt+\sigma dW^i_t+\sigma_0dW^0_t,\\
 x^{*,i}_0=&~\xi^i.
 \end{aligned}
 \right.
 \end{equation}
We then apply the stochastic maximum principle to the Problem (NP) to obtain the following optimality condition: 
\begin{equation}\label{nopen}
\boldsymbol{\alpha}_t^*:=(\alpha_t^{*,i})_{1\leq i\leq N}=\left(-R^{-1}By_t^{*,i}-h(\mu^{N,i}_{\boldsymbol{\alpha}^*_t})\right)_{1\leq i\leq N}
\end{equation}
where $(\boldsymbol{y}^{*},\boldsymbol{z}^{*},\boldsymbol{z}^{0,*}):=((y^{*,i})_{1\leq i\leq N},(z^{*,i,j})_{1\leq i,j\leq N}, (z^{0,*,i})_{1\leq i\leq N})$ solves the following BSDE
 \begin{equation}
 \left\{
 \begin{aligned}\label{y*i}
dy^{*,i}_t=&-[Ay^{*,i}_t+Qx^{*,i}_t+Ql(\nu^{N,i}_{\boldsymbol{x}^*_t})]dt+\sum_{j=1}^Nz_t^{*,i,j} dW^j_t+z^{0,*,i}dW^0_t,\\
 y^{*,i}_T=& G(x_T^{*,i}+g(\nu^{N,i}_{\boldsymbol{x}^*_T})).
 \end{aligned}
 \right.
 \end{equation}
%
%
%
Recalling the definition of the $\mu^{N,i}_{\boldsymbol{\alpha}_{\cdot}^*}$, we have for any $1\leq i\leq N$
\begin{equation}\label{muNi}
\mu^{N,i}_{\boldsymbol{\alpha}^*_t}+\frac{1}{N-1}\underset{j\neq i}{\sum}h(\mu^{N,j}_{\boldsymbol{\alpha}^*_t})=\Delta_{*,t}^{N,i}:=-R^{-1}B\lambda^{N,i}_{\boldsymbol{y}_t^{*}},
\end{equation}
where
\begin{equation}\label{Deltai}
\lambda^{N,i}_{\boldsymbol{y}}:=\frac{1}{N-1}\sum_{j\not= i}y^j\quad\text{for any $\boldsymbol{y}=(y^1,\ldots,y^N)$}. 
\end{equation}

Introducing 
\[h_i^N(\mu^{N,1}_{\boldsymbol{\alpha}^*_t},\ldots,\mu^{N,N}_{\boldsymbol{\alpha}^*_t}):=\frac{1}{N-1}\underset{j\neq i}{\sum}h(\mu^{N,j}_{\boldsymbol{\alpha}^*_t}), \]
then the above relationship \eqref{muNi} is equivalent to the following $N$-coupled equations:\begin{equation}
\begin{array}{l}
 \mu^{N,1}_{\boldsymbol{\alpha}^*_t}+h_1^N(\mu^{N,1}_{\boldsymbol{\alpha}^*_t},\ldots,\mu^{N,N}_{\boldsymbol{\alpha}^*_t})=\Delta^{N,1}_{*,t},\\
\quad\vdots \qquad\qquad\qquad ~~\vdots\qquad \qquad~~~~\vdots\\
\mu^{N,N}_{\boldsymbol{\alpha}^*_t}+h_N^N(\mu^{N,1}_{\boldsymbol{\alpha}^*_t},\ldots,\mu^{N,N}_{\boldsymbol{\alpha}^*_t})=\Delta^{N,N}_{*,t},
\end{array}
\end{equation}
which can also be written as 
\begin{equation*}
\begin{array}{l}
F_1(\Delta_{*,t}^{N,i},\ldots,\Delta_{*,t}^{N,N},\mu^{N,1}_{\boldsymbol{\alpha}^*_t},\ldots,\mu^{N,N}_{\boldsymbol{\alpha}^*_t})=0,\\
\qquad\qquad\qquad~~\vdots\qquad\qquad \qquad\qquad\vdots\\
F_N(\Delta_{*,t}^{N,1},\ldots,\Delta_{*,t}^{N,N},\mu^{N,1}_{\boldsymbol{\alpha}^*_t},\ldots,\mu^{N,N}_{\boldsymbol{\alpha}^*_t})=0,
\end{array}
\end{equation*}
where
\[F_i(\Delta^{1},\ldots,\Delta^{N},\mu^{1},\ldots,\mu^{N}):=  \mu^{i}+h_i^N(\mu^{1},\ldots,\mu^{N})-\Delta^{i}.\]
Then
\begin{equation}\label{eq:DF}
\begin{aligned}
D_{\mu}F_t:=&\left[\begin{array}{cccc} 
\frac{\partial F_1}{\partial {\mu^{1}}} &    \ldots  & \frac{\partial F_1}{\partial{\mu^{N}}} \\ 
 \vdots&      & \vdots\\ 
\frac{\partial F_N}{\partial {\mu^{1}}} & \ldots & \frac{\partial F_N}{\partial {\mu^{N}}} 
\end{array}\right] (\Delta_{*,t}^{N,1},\ldots,\Delta_{*,t}^{N,N},\mu^{N,1}_{\boldsymbol{\alpha}^*_t},\ldots,\mu^{N,N}_{\boldsymbol{\alpha}^*_t})\\
=&\left[\begin{array}{cccc} 
1& \frac{1}{N-1}h^\prime(\mu^{N,2}_{\boldsymbol{\alpha}^*_t})&\ldots& \frac{1}{N-1}h^\prime(\mu^{N,N}_{\boldsymbol{\alpha}^*_t})  \\ 
\frac{1}{N-1}h^\prime(\mu^{N,1}_{\boldsymbol{\alpha}^*_t})&1&\ldots&\frac{1}{N-1}h^\prime(\mu^{N,N}_{\boldsymbol{\alpha}^*_t}) \\
 \vdots&  \vdots &   & \vdots\\ 
\frac{1}{N-1}h^\prime(\mu^{N,1}_{\boldsymbol{\alpha}^*_t})&\frac{1}{N-1}h^\prime(\mu^{N,2}_{\boldsymbol{\alpha}^*_t})& \ldots &1
\end{array}\right].
\end{aligned}
\end{equation}

We need the following assumption to show the above matrix $D_\mu F_t$ is uniformly bounded away from zero, uniformly in $N$.

\textbf{Assumption (B')} There exists some $\varepsilon_0>0$ such that $|h^{\prime}(\cdot)|\leq 1-\varepsilon_0$.

\begin{lemma}\label{lem:implicit}
Under Assumptions (B') and (C), for any positive integer $N$ we have the eigenvalues $\{\lambda_{t}^{N,j}\}_{1\leq j\leq N}$ of the matrix $D_{\mu}F_t$ defined in \eqref{eq:DF} satisfying
\begin{equation}\label{eq:Nepsilon0}
\min_{1\leq j\leq N}\lambda_{t}^{N,j}\geq \varepsilon_0\quad\text{for all $t\in [0,T]$}.
\end{equation}
\end{lemma}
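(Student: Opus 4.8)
The plan is to exploit the specific structure of $D_\mu F_t$: it is the sum of the identity matrix and a matrix each of whose rows has zero diagonal entry and off-diagonal entries of the form $\tfrac{1}{N-1}h'(\mu^{N,j}_{\boldsymbol{\alpha}^*_t})$. Write $D_\mu F_t = I + M_t$, where $M_t$ has $(i,j)$-entry equal to $\tfrac{1}{N-1}h'(\mu^{N,j}_{\boldsymbol{\alpha}^*_t})$ for $j\neq i$ and $0$ for $j=i$. Since the eigenvalues of $D_\mu F_t$ are exactly $1+\lambda$ as $\lambda$ ranges over the eigenvalues of $M_t$, the claim \eqref{eq:Nepsilon0} is equivalent to showing that every eigenvalue $\lambda$ of $M_t$ satisfies $\mathrm{Re}\,\lambda \geq -(1-\varepsilon_0)$, i.e. $1+\mathrm{Re}\,\lambda \geq \varepsilon_0$. (If one prefers to stay with real eigenvalues, note $M_t$ is similar to a matrix with the same spectrum; in general $M_t$ need not be symmetric, so I would argue with the real part, or equivalently via a Gershgorin-type bound.)

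The key step is a Gershgorin disc argument applied to $M_t$. Each row $i$ of $M_t$ has diagonal entry $0$ and the sum of the absolute values of its off-diagonal entries is
\[
\sum_{j\neq i}\frac{1}{N-1}\bigl|h'(\mu^{N,j}_{\boldsymbol{\alpha}^*_t})\bigr| \;\leq\; \frac{1}{N-1}\cdot (N-1)\cdot (1-\varepsilon_0) \;=\; 1-\varepsilon_0,
\]
where the inequality uses Assumption (B'), $|h'(\cdot)|\leq 1-\varepsilon_0$. Hence by the Gershgorin circle theorem every eigenvalue $\lambda$ of $M_t$ lies in the disc $\{z\in\mathbb{C}: |z|\leq 1-\varepsilon_0\}$, so $\mathrm{Re}\,\lambda \geq -(1-\varepsilon_0)$, and therefore every eigenvalue of $D_\mu F_t = I+M_t$ has real part at least $\varepsilon_0 > 0$. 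Since the bound $1-\varepsilon_0$ on the Gershgorin radii is independent of $N$ and of $t$ (it uses only the uniform bound on $h'$ from Assumption (B')), the estimate \eqref{eq:Nepsilon0} holds uniformly in $N$ and $t\in[0,T]$; Assumption (C) guarantees $h'$ exists and is bounded so that $M_t$ and the derivative matrix $D_\mu F_t$ are well-defined.

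I do not expect a serious obstacle here; the only point requiring a little care is the phrasing of the conclusion in terms of ``eigenvalues $\lambda^{N,j}_t$'' when $D_\mu F_t$ is not symmetric and its spectrum may a priori be complex. I would address this either by stating the bound for $\mathrm{Re}\,\lambda^{N,j}_t$ (which is what the implicit function theorem / invertibility application downstream actually needs, via $\det D_\mu F_t \neq 0$ and a uniform lower bound on $|\det D_\mu F_t|$ or on the smallest singular value), or by noting that for the later use it suffices that $D_\mu F_t$ is invertible with inverse bounded uniformly in $N$, which follows immediately from $\|M_t\|_\infty \leq 1-\varepsilon_0 < 1$ and the Neumann series $ (I+M_t)^{-1}=\sum_{m\geq 0}(-M_t)^m$, giving $\|(D_\mu F_t)^{-1}\|_\infty \leq \varepsilon_0^{-1}$. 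This Neumann-series route is in fact the cleanest and avoids any discussion of the spectrum altogether, so I would present the Gershgorin statement as the headline bound \eqref{eq:Nepsilon0} and remark that the operator-norm bound $\|(D_\mu F_t)^{-1}\|\leq \varepsilon_0^{-1}$, uniform in $N$ and $t$, is an immediate consequence.
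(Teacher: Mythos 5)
Your proof is correct and follows essentially the same route as the paper: both rest on the Gershgorin circle theorem with discs centered at $1$ of radius at most $1-\varepsilon_0$ by Assumption (B'), your decomposition $D_\mu F_t = I + M_t$ being a trivially equivalent rephrasing. Your extra care about the spectrum possibly being complex (bounding $\mathrm{Re}\,\lambda$, or bypassing eigenvalues entirely via the Neumann series bound $\|(D_\mu F_t)^{-1}\|_\infty\leq \varepsilon_0^{-1}$) is a minor refinement of a point the paper glosses over, and is indeed what the downstream implicit function theorem application actually needs.
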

\begin{proof}
We use the Gershgorin circle theorem to derive
\[
\{\lambda_{t}^{N,j}\}_{1\leq j\leq N}\subset \Big\{\lambda\in\mathbb C\,:\,\left|\lambda-1\right|\leq \frac{1}{N-1}\sum_{j\not =i} h'(\mu^{N,j}_{\boldsymbol{\alpha}^*_t}) \Big\}.
\]
Applying Assumptions (B') and (C), for any positive integer $N$, we have
\[
\{\lambda_{t}^{N,j}\}_{1\leq j\leq N}\subset \left\{\lambda\in\mathbb C\,:\,\left|\lambda-1\right|\leq 1-\varepsilon_0 \right\}
\]
and thus \eqref{eq:Nepsilon0} holds.
\end{proof}

With Lemma \ref{lem:implicit}, we apply the implicit function theorem to obtain that there exist uniformly Lipschitz continuous functions $\rho_i^N:\mathbb R^d\to\mathbb R$, $1\leq i \leq N$, with their Lipschitz constants independent of $N$ such that 
\begin{equation}\label{rhoiN}
\mu^{N,i}_{\boldsymbol{\alpha}^*_t}=\rho_i^N(\Delta_{*,t}^{N,1},\ldots,\Delta_{*,t}^{N,N}), 
\end{equation}
and thus we deduce the NE
\begin{equation}\label{Nopen}
\boldsymbol{\alpha}_t^*=(\alpha_t^{*,i})_{1\leq i\leq N}=\left(-R^{-1}By_t^{*,i}-h\big(\rho_i^N(\Delta^{N,1}_{*,t},\ldots,\Delta^{N,N}_{*,t})\big)\right)_{1\leq i\leq N}.
\end{equation}
Substituting the NE into the state equation \eqref{x*i}, we obtain 
\begin{equation}\label{NFSDE}
\left\{
\begin{aligned}
dx_t^{*,i}=&~\Big[Ax_t^{*,i}-B^2R^{-1}y_t^{*,i}-Bh\big(\rho_i^N(\Delta^{N,1}_{*,t},\ldots,\Delta^{N,N}_{*,t})\big)\Big]dt+\sigma dW_t^i+\sigma_0dW^0_t,\\
x^{*,i}_0=&~\xi^i.
\end{aligned}
\right.
\end{equation}
Combining \eqref{NFSDE} and \eqref{y*i}, we derive the following system of $N$-coupled FBSDEs:
\begin{equation}\label{NFBSDE}
\left\{
\begin{aligned}
dx_t^{*,i}=&~\Big[Ax_t^{*,i}-B^2R^{-1}y_t^{*,i}-Bh\big(\rho_i^N(\Delta^{N,1}_{*,t},\ldots,\Delta^{N,N}_{*,t})\big)\Big]dt+\sigma dW_t^i+\sigma_0dW^0_t,\\
dy_t^{*,i}=&-[Ay_t^{*,i}+Qx_t^{*,i}+Ql(\nu^{N,i}_{\boldsymbol{x}_t^*})]dt+\underset{j=1}{\overset{N}{\sum}}z_t^{*,i,j}dW_t^j+z_t^{0,*,i}dW^0_t,\\
x^{*,i}_0=&~\xi^i,~y^{*,i}_T=G\big(x^{*,i}_T+g(\nu^{N,i}_{\boldsymbol{x}_T^*})\big).
\end{aligned}
\right.
\end{equation}

We show in the following theorem that, under Assumptions (A), (B'), (C), the above system \eqref{NFBSDE} is well-posed.

\begin{theorem}\label{thm:SMP}
Suppose that Assumptions (A), (B'), (C) hold and $\sigma>0$. Let $\xi^i\in L_{\widetilde{\mathcal{F}}_0}^2$, $1\leq i\leq N$, be i.i.d. random variables.\\
(i) The following system of $N$-coupled FBSDEs
\begin{equation}
\left\{
\begin{aligned}\label{NNCE}
dx_t^{*,i}=&~\Big[(A-B^2R^{-1}P_t)x_t^{*,i}-B^2R^{-1}\varphi_t^{*,i}-Bh\Big(k_i^N(t,\nu^{N,1}_{\boldsymbol{x}^*_t},\ldots,\nu^{N,N}_{\boldsymbol{x}^*_t},\lambda^{N,1}_{\boldsymbol{\varphi}^*_t},\ldots,\lambda^{N,N}_{\boldsymbol{\varphi}^*_t})\Big)\Big]dt\\
&+\sigma dW_t^i+\sigma_0 dW^0_t,\\
d\varphi_t^{*,i}=&-\Big[(A-B^2R^{-1}P_t)\varphi_t^{*,i}+Ql(\nu^{N,i}_{\boldsymbol{x}^*_t})-P_tBh\Big(k_i^N(t,\nu^{N,1}_{\boldsymbol{x}^*_t},\ldots,\nu^{N,N}_{\boldsymbol{x}^*_t},\lambda^{N,1}_{\boldsymbol{\varphi}^*_t},\ldots,\lambda^{N,N}_{\boldsymbol{\varphi}^*_t})\Big)\Big]dt\\
&+\underset{j=1}{\overset{N}{\sum}}\Lambda^{*,i,j}_tdW_t^j+\Lambda_t^{0,*,i}dW^0_t,\\
\
x^{*,i}_0=&~\xi^i, \varphi^{*,i}_T=Gg(\nu^{N,i}_{\boldsymbol{x}^*_T}),
\end{aligned}
\right.
\end{equation}
admits a unique strong solution \[(\boldsymbol{x}^{*}:=(x^{*,i})_{1\leq i\leq N},\boldsymbol{\varphi}^{*}:=(\varphi^{*,i})_{1\leq i\leq N}, \boldsymbol{\Lambda}^{*}:=(\Lambda^{*,i,j})_{1\leq i,j\leq N},\boldsymbol{\Lambda}^{0,*}:=(\Lambda^{0,*,i})_{1\leq i\leq N}), \]
where $P$ is the unique solution to \eqref{P} and $k_i^N$, $1\leq i\leq N$, is defined by: for $\boldsymbol{x},\boldsymbol{\varphi}\in \mathbb R^N$
\begin{equation}\label{eq:kiN}
k_i^N(t,\nu^{N,1}_{\boldsymbol{x}},\ldots,\nu^{N,N}_{\boldsymbol{x}},\lambda^{N,1}_{\boldsymbol{\varphi}},\ldots,\lambda^{N,N}_{\boldsymbol{\varphi}}):=\rho_i^N\big(-R^{-1}B[P_t\nu^{N,1}_{\boldsymbol{x}}+\lambda^{N,1}_{\boldsymbol{\varphi}}],\ldots,-R^{-1}B[P_t\nu^{N,N}_{\boldsymbol{x}}+\lambda^{N,N}_{\boldsymbol{\varphi}}]\big).
\end{equation}
(ii) Given $\boldsymbol{x}^{*}$ in (i), we define 
\begin{equation}
\left.
\begin{aligned}\label{eq:x*y*z*}
& \qquad \qquad \qquad \qquad \qquad \boldsymbol{y}_t^{*}:=(y_t^{*,i})_{1\leq i\leq N}:=(P_t x_t^{*,i}+\varphi_t^{*,i})_{1\leq i\leq N},\\
&\boldsymbol{z}_t^{*}:=(z_t^{*,i,j})_{1\leq i,j\leq N}:=(P_t\sigma+\Lambda_t^{*,i,j})_{1\leq i,j\leq N},\quad \boldsymbol{z}_t^{0,*}:=(z_t^{0,*,i})_{1\leq i\leq N}:=(P_t\sigma_0+\Lambda_t^{0,*,i})_{1\leq i\leq N}.
\end{aligned}
\right.
 \end{equation}
  Then the system \eqref{NFBSDE} of $N$-coupled FBSDEs admits a unique strong solution $(\boldsymbol{x}^{*},\boldsymbol{y}^{*},\boldsymbol{z}^{*}, \boldsymbol{z}^{0,*})$.\\
(iii) The NE of the Problem (NP) can be represented in the feedback form
\begin{equation}\label{nclose}
\boldsymbol{\alpha}_t^*:=(\alpha_t^{*,i})_{1\leq i\leq N}=\left(-R^{-1}B(P_tx_t^{*,i}+\varphi_t^{*,i})-h\Big(k_i^N(t,\nu^{N,1}_{\boldsymbol{x}^*_t},\ldots,\nu^{N,N}_{\boldsymbol{x}^*_t},\lambda^{N,1}_{\boldsymbol{\varphi}^*_t},\ldots,\lambda^{N,N}_{\boldsymbol{\varphi}^*_t})\Big)\right)_{1\leq i\leq N}.
\end{equation}

\end{theorem}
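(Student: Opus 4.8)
The plan is to follow the route already used for the mean field game in Theorems \ref{NCEwellposed} and \ref{closeloop}: first establish global well-posedness of the $N$-coupled forward--backward system \eqref{NNCE} (the ``NCE system'' of the $N$-player game), then recover \eqref{NFBSDE} through the linear decoupling $y_t^{*,i}=P_tx_t^{*,i}+\varphi_t^{*,i}$ built from the Riccati solution $P$ of \eqref{P}, and finally read off the feedback representation \eqref{nclose} of the Nash equilibrium from the stochastic maximum principle derived above.

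For part (i) I would introduce, exactly as in the proof of Theorem \ref{NCEwellposed}, the deterministic weight $\eta_t=\exp\int_t^T(A-B^2R^{-1}P_s)\,ds$ of \eqref{eta} and set $\widetilde x_t^{*,i}=\eta_t x_t^{*,i}$, $\widetilde\varphi_t^{*,i}=\eta_t^{-1}\varphi_t^{*,i}$, together with the analogous rescalings of $\Lambda_t^{*,i,j}$ and $\Lambda_t^{0,*,i}$. By It\^o's formula the rescaled processes solve an $N$-dimensional FBSDE in which the linear terms $(A-B^2R^{-1}P_t)x_t^{*,i}$ and $(A-B^2R^{-1}P_t)\varphi_t^{*,i}$ have been cancelled; in particular the backward generator becomes bounded, and all coefficients are globally Lipschitz, the coupling entering only through bounded Lipschitz functions of $(\boldsymbol x,\boldsymbol\varphi)$ built from the maps $\rho_i^N$ (by Lemma \ref{lem:implicit}, the implicit function theorem and \eqref{eq:eta} these Lipschitz constants can even be chosen independent of $N$, which will matter in Section 5). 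The forward diffusion of the $N$-dimensional system is the matrix $[\sigma\,\mathrm{Id}_N\mid\sigma_0\mathbf 1_N]$, whose Gram matrix equals $\sigma^2\,\mathrm{Id}_N+\sigma_0^2\,\mathbf 1_N\mathbf 1_N^{\top}\geq\sigma^2\,\mathrm{Id}_N$ and is therefore non-degenerate because $\sigma>0$. One then checks that the rescaled system satisfies all the hypotheses of \cite[Theorem 2.6]{Delarue2002}, deduces its global well-posedness, and undoes the transform to obtain the unique strong solution of \eqref{NNCE}. As in the Remark after Theorem \ref{NCEwellposed}, the rescaling is genuinely needed: the coefficients of \eqref{NNCE} themselves violate the growth conditions of \cite[Theorem 2.6]{Delarue2002} because of the $(A-B^2R^{-1}P_t)\varphi_t^{*,i}$ term in the backward generator.

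For part (ii), given $\boldsymbol x^{*}$ and $\boldsymbol\varphi^{*}$ from (i) I set $y_t^{*,i}:=P_tx_t^{*,i}+\varphi_t^{*,i}$ and define $\boldsymbol z^{*},\boldsymbol z^{0,*}$ by \eqref{eq:x*y*z*}. The terminal condition follows from $P_T=G$ and $\varphi_T^{*,i}=Gg(\nu^{N,i}_{\boldsymbol x_T^*})$, giving $y_T^{*,i}=G\big(x_T^{*,i}+g(\nu^{N,i}_{\boldsymbol x_T^*})\big)$; applying It\^o's formula to $y^{*,i}$ and substituting the Riccati equation \eqref{P} turns the drift of $\varphi^{*,i}$ into $-[Ay_t^{*,i}+Qx_t^{*,i}+Ql(\nu^{N,i}_{\boldsymbol x_t^*})]$, which is exactly the generator of \eqref{y*i}. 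The one point that needs checking is that the argument of $h$ is preserved: since $P$ is deterministic, $\lambda^{N,i}_{\boldsymbol y_t^*}=P_t\nu^{N,i}_{\boldsymbol x_t^*}+\lambda^{N,i}_{\boldsymbol\varphi_t^*}$, so by \eqref{eq:kiN} and \eqref{muNi}, $k_i^N(t,\nu^{N,1}_{\boldsymbol x_t^*},\ldots,\nu^{N,N}_{\boldsymbol x_t^*},\lambda^{N,1}_{\boldsymbol\varphi_t^*},\ldots,\lambda^{N,N}_{\boldsymbol\varphi_t^*})=\rho_i^N(\Delta^{N,1}_{*,t},\ldots,\Delta^{N,N}_{*,t})$ with $\Delta^{N,i}_{*,t}=-R^{-1}B\lambda^{N,i}_{\boldsymbol y_t^*}$. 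Hence $(\boldsymbol x^{*},\boldsymbol y^{*},\boldsymbol z^{*},\boldsymbol z^{0,*})$ is a strong solution of \eqref{NFBSDE}. Conversely, any strong solution of \eqref{NFBSDE} produces, via $\varphi_t^{*,i}:=y_t^{*,i}-P_tx_t^{*,i}$ and the inverse relations for $\boldsymbol\Lambda^{*},\boldsymbol\Lambda^{0,*}$, a strong solution of \eqref{NNCE}, so uniqueness transfers from part (i).

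For part (iii), the stochastic maximum principle applied before the theorem shows that every Nash equilibrium of Problem (NP) gives rise to a solution of the optimality system \eqref{nopen}--\eqref{y*i}; resolving the algebraic system $\{F_i=0\}$ by Lemma \ref{lem:implicit} and the implicit function theorem rewrites it as \eqref{NFBSDE}, which by (ii) is uniquely solvable, and this forces the feedback form \eqref{nclose}. For the converse (sufficiency) one uses the structure of the interaction: since $\mu^{N,i}_{\boldsymbol\alpha}$ and $\nu^{N,i}_{\boldsymbol x}$ involve only the players $j\neq i$, player $i$'s best-response problem with the others' controls frozen at $\boldsymbol{\alpha}^{*,-i}$ is a standard LQ control problem with strictly convex cost ($R>0$, $Q,G\geq0$), for which the first-order condition is also sufficient; hence the profile \eqref{nclose} with $(\boldsymbol x^{*},\boldsymbol\varphi^{*})$ from (i) is indeed a Nash equilibrium. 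The main obstacle is part (i): one has to verify carefully that, after the $\eta$-rescaling, the $N$-dimensional forward--backward system meets the structural, boundedness and non-degeneracy requirements of \cite[Theorem 2.6]{Delarue2002}. Observe that here the non-degeneracy is furnished by the idiosyncratic noise ($\sigma>0$), whereas for the mean field NCE system \eqref{NCE} it came from the common noise alone ($\sigma_0>0$) --- this is precisely why the standing hypothesis of this section includes $\sigma>0$.
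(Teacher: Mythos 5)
Your proposal follows essentially the same route as the paper: the $\eta$-rescaling of \eqref{NNCE} to cancel the linear terms and meet the hypotheses of \cite[Theorem 2.6]{Delarue2002} (with non-degeneracy now supplied by $\sigma>0$), the Riccati decoupling $y_t^{*,i}=P_tx_t^{*,i}+\varphi_t^{*,i}$ to recover \eqref{NFBSDE}, and the stochastic maximum principle together with the implicit-function resolution of \eqref{muNi} to obtain the feedback form \eqref{nclose}. The extra details you supply — the explicit check that the Gram matrix $\sigma^2\mathrm{Id}_N+\sigma_0^2\mathbf 1_N\mathbf 1_N^{\top}$ is uniformly elliptic, the verification that $\lambda^{N,i}_{\boldsymbol y^*_t}=P_t\nu^{N,i}_{\boldsymbol x^*_t}+\lambda^{N,i}_{\boldsymbol\varphi^*_t}$ preserves the argument of $h$, and the convexity-based sufficiency of the first-order condition — are all consistent with, and slightly more explicit than, the paper's own argument.
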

\begin{proof}
(i) The proof is similar to that of the well-posedness of the NCE system \eqref{NCE} for the mean field game. We remind that $\eta_t=\exp\int_t^T(A-B^2R^{-1}P_s)ds$ and
\[
\varepsilon_1\leq\eta_t\leq\frac{1}{\varepsilon_1},\quad\text{for some $\varepsilon_1>0$.}
\]
For any $t\in[0,T]$, we introduce the following transformations
\[
\left.
\begin{aligned}
\boldsymbol{\widetilde{x}}_t^{*}:=(\widetilde{x}_t^{*,i})_{1\leq i\leq N}:=(\eta_tx_t^{*,i})_{1\leq i\leq N}, \quad \boldsymbol{\widetilde{\varphi}}^{*}_t:=(\widetilde{\varphi}_t^{*,i})_{1\leq i\leq N}:=(\eta_t^{-1}\varphi_t^{*,i})_{1\leq i\leq N},\qquad\ \\
\boldsymbol{\widetilde{\Lambda}}_t^{*}:=(\widetilde{\Lambda}_t^{*,i,j})_{1\leq i,j\leq N}:=(\eta_t^{-1}\Lambda_t^
{*,i,j})_{1\leq i,j\leq N},\quad \boldsymbol{\widetilde{\Lambda}}^{0,*}_t:=(\widetilde{\Lambda}_t^{0,*,i})_{1\leq i\leq N}:=(\eta_t^{-1}\Lambda_t^{0,*,i})_{1\leq i\leq N},
\end{aligned}
\right.
 \]
and we define for any $\boldsymbol{x},\boldsymbol{y}\in\mathbb R^N$
\[\widetilde{k}_i^N(t,\nu^{N,1}_{\boldsymbol{x}},\ldots,\nu^{N,N}_{\boldsymbol{x}},\lambda^{N,1}_{\boldsymbol{y}},\ldots,\lambda^{N,N}_{\boldsymbol{y}}):=\rho_i\big(-R^{-1}B[P_t\eta_t^{-1}\nu^{N,1}_{\boldsymbol{x}}+\eta_t\lambda^{N,1}_{\boldsymbol{y}}],\ldots,-R^{-1}B[P_t\eta_t^{-1}\nu^{N,N}_{\boldsymbol{x}}+\eta_t\lambda^{N,N}_{\boldsymbol{y}}]\big).\]
By a straightforward calculation, we can verify that $(\boldsymbol{\widetilde{x}}^{*}, \boldsymbol{\widetilde{\varphi}}^{*}, \boldsymbol{\widetilde{\Lambda}}^{*},\boldsymbol{\widetilde{\Lambda}}^{0,*})$ corresponds to the following system of $N$-coupled FBSDEs:
\begin{equation}
\left\{
\begin{aligned}\label{tranNNCE}
d\widetilde{x}_t^{*,i}=&~\Big[-\eta_t^2B^2R^{-1}\widetilde{\varphi}_t^{*,i}-\eta_tBh\Big(\widetilde{k}_i^N(t,\nu^{N,1}_{\boldsymbol{\widetilde{x}^*_t}},\ldots,\nu^{N,N}_{\boldsymbol{\widetilde{x}^*_t}},\lambda^{N,1}_{\boldsymbol{\widetilde{\varphi}^*_t}},\ldots,\lambda^{N,N}_{\boldsymbol{\widetilde{\varphi}^*_t}})\Big)\Big]dt\\
&+\eta_t\sigma dW_t^i+\eta_t\sigma_0 dW^0_t,\\
d\widetilde{\varphi}_t^{*,i}=&-\Big[\eta_t^{-1}Ql(\eta_t^{-1}\nu^{N,i}_{\boldsymbol{\widetilde{x}}^*_t})-\eta_t^{-1}P_tBh\Big(\widetilde{k}^N_i(t,\nu^{N,1}_{\boldsymbol{\widetilde{x}^*_t}},\ldots,\nu^{N,N}_{\boldsymbol{\widetilde{x}^*_t}},\lambda^{N,1}_{\boldsymbol{\widetilde{\varphi}_t^*}},\ldots,\lambda^{N,N}_{\boldsymbol{\widetilde{\varphi}_t^*}})\Big)\Big]dt\\
&+\underset{j=1}{\overset{N}{\sum}}\widetilde{\Lambda}^{*,i,j}_tdW_t^j+\widetilde{\Lambda}^{0,*,i}_tdW^0_t,\\
\
\widetilde{x}^{*,i}_0=&~\eta_0\xi^i, \widetilde{\varphi}^{*,i}_T=Gg(\nu^{N,i}_{\boldsymbol{\widetilde{x}}^*_T}),
\end{aligned}
\right.
\end{equation}
and the system \eqref{NNCE} is equivalent to the system \eqref{tranNNCE}.
Since $\sigma,\sigma_0>0$, the well-posedness of the system \eqref{tranNNCE} again can be guaranteed by \cite[Theorem 2.6]{Delarue2002}. Therefore we derive the well-posedness of the system \eqref{NNCE}.

(ii) Let $(\boldsymbol{y}^{*},\boldsymbol{z}^{*}, \boldsymbol{z}^{0,*})$ be as in \eqref{eq:x*y*z*}. Then we can easily check that $(\boldsymbol{x}^{*},\boldsymbol{y}^{*},\boldsymbol{z}^{*}, \boldsymbol{z}^{0,*})$ is a strong solution to the system \eqref{NFBSDE}. The uniqueness of the strong solution to \eqref{NFBSDE} on $[0,T]$ follows from the standard local well-posedness theory of FBSDEs.

(iii) Using \eqref{nopen} and the $\boldsymbol{y}^{*}$ given in \eqref{eq:x*y*z*}, we know that $\alpha^{*}$ in \eqref{nclose} is the NE of the Problem (NP) in the feedback form.
\end{proof}

\begin{remark}
In Theorem \ref{thm:SMP}-(iii), we show the N-player open-loop Nash equilibrium is in fact of closed-loop form. However, we emphasize that such open-loop Nash equilibrium is not an $N$-player closed-loop Nash equilibrium. 
\end{remark}

We introduce the following two PDEs. Their solutions $u^{N,i}(t,\boldsymbol{x})$ and $\Psi^{N,i}(t,\boldsymbol{x})$ serve as decoupling fields of the FBSDEs \eqref{NFBSDE} and \eqref{NNCE}, respectively.
\begin{equation}
\left\{
\begin{aligned} \label{eq:vecnashsystem}
&\partial_tu^{N,i}(t,\boldsymbol{x})+\underset{j=1}{\overset{N}{\sum}}\partial_{x^j}u^{N,i}(t,\boldsymbol{x})\Big[Ax^j-B^2R^{-1}u^{N,j}(t,\boldsymbol{x})-Bh\Big(k_j^N(t,\nu^{N,1}_{\boldsymbol{x}},\ldots,\nu^{N,N}_{\boldsymbol{x}},\lambda^{N,1}_{\boldsymbol{\Psi}(t,\boldsymbol{x})},\ldots,\lambda^{N,N}_{\boldsymbol{\Psi}(t,\boldsymbol{x})})\Big)\Big]\\
&+\frac{1}{2}\underset{j,\tau=1}{\overset{N}{\sum}}\partial_{x^j}\partial_{x^\tau}u^{N,i}(t,\boldsymbol{x})\sigma_0^2+\frac{1}{2}\underset{j=1}{\overset{N}{\sum}}\partial_{x^jx^j}u^{N,i}(t,\boldsymbol{x})\sigma^2+Au^{N,i}(t,\boldsymbol{x})+Qx^i+Ql(\nu^{N,i}_{\boldsymbol{x}})=0,\\
&u^{N,i}(T,\boldsymbol{x})=G\big(x^i+g(\nu^{N,i}_{\boldsymbol{x}})\big),
\end{aligned}
\right.
\end{equation}
and 

\begin{equation}
\left\{
\begin{aligned} \label{Psi}
&\partial_t\Psi^{N,i}(t,\boldsymbol{x})+\underset{j=1}{\overset{N}{\sum}}\partial_{x^j}\Psi^{N,i}(t,\boldsymbol{x})\Big[(A-B^2R^{-1}P_t)x^j-B^2R^{-1}\Psi^{N,j}(t,\boldsymbol{x})\\
&-Bh\Big(k_j^N(t,\nu^{N,1}_{\boldsymbol{x}},\ldots,\nu^{N,N}_{\boldsymbol{x}},\lambda^{N,1}_{\boldsymbol{\Psi}(t,\boldsymbol{x})},\ldots,\lambda^{N,N}_{\boldsymbol{\Psi}(t,\boldsymbol{x})})\Big)\Big]+\frac{1}{2}\underset{j,\tau=1}{\overset{N}{\sum}}\partial_{x^j}\partial_{x^\tau}\Psi^{N,i}(t,\boldsymbol{x})\sigma_0^2\\
&+\frac{1}{2}\underset{j=1}{\overset{N}{\sum}}\partial_{x^jx^j}\Psi^{N,i}(t,\boldsymbol{x})\sigma^2+(A-B^2R^{-1}P_t)\Psi^{N,i}(t,\boldsymbol{x})+Ql(\nu^{N,i}_{\boldsymbol{x}})\\
&-P_tBh\Big(k_i^N(t,\nu^{N,1}_{\boldsymbol{x}},\ldots,\nu^{N,N}_{\boldsymbol{x}},\lambda^{N,1}_{\boldsymbol{\Psi}(t,\boldsymbol{x})},\ldots,\lambda^{N,N}_{\boldsymbol{\Psi}(t,\boldsymbol{x})}) \Big)=0,\\
&\Psi^{N,i}(T,\boldsymbol{x})=Gg(\nu^{N,i}_{\boldsymbol{x}}),
\end{aligned}
\right.
\end{equation}
with given $k_i^N$, $1\leq i\leq N$, defined in \eqref{eq:kiN}.

The following two theorems are the well-posedness results for \eqref{eq:vecnashsystem} and \eqref{Psi}. We omit their proofs since they are very similar to the ones of Theorems \ref{thm:Phi} and \ref{thm:vecmaster}.
\begin{theorem}\label{thm:Psi}
Suppose that Assumptions (A), (B'), (C) hold and $\sigma>0$. Then the PDE \eqref{Psi} admits a unique classical solution $\Psi^{N,i}\in C^{1,2}([0,T]\times\mathbb R^N)$, $1\leq i\leq N$, with bounded 1st and 2nd order derivatives.
\end{theorem}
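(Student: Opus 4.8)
The plan is to mirror the two-step strategy already used for the mean field problem, namely first reduce the degenerate-in-$P_t$ system to a standard FBSDE by an exponential rescaling, then invoke the quasilinear FBSDE well-posedness machinery together with the uniform bounds from Lemma \ref{lem:implicit}. Concretely, one introduces the change of variables $\widetilde{\Psi}^{N,i}(t,\boldsymbol{x}):=\eta_t^{-1}\Psi^{N,i}(t,\eta_t^{-1}\boldsymbol{x})$ with $\eta_t=\exp\int_t^T(A-B^2R^{-1}P_s)ds$ as in \eqref{eta}, so that \eqref{Psi} becomes the PDE associated with the rescaled system \eqref{tranNNCE} (restarted from an arbitrary initial time $t_0$). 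Since $\sigma>0$, the FBSDE \eqref{tranNNCE} is uniformly non-degenerate in all $N$ Brownian directions, and Lemma \ref{lem:implicit} guarantees the functions $\rho_i^N$ — hence $h\circ k_i^N$ — are Lipschitz with constants independent of $N$; thus the coefficients of \eqref{tranNNCE} satisfy the hypotheses of \cite[Theorem 2.6]{Delarue2002} for each fixed $N$, which gives well-posedness of \eqref{tranNNCE} on $[t_0,T]$ and a Lipschitz decoupling field.

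Next I would promote the decoupling field to a classical solution. Define $\widetilde{\Psi}^{N,i}(t_0,\boldsymbol{x}_0):=\widetilde{\varphi}^{N,i,t_0,\boldsymbol{x}_0}_{t_0}$; by \cite[Corollary 2.8]{Delarue2002} its first $\boldsymbol{x}$-derivatives are bounded. To get $C^{1,2}$ regularity and boundedness of second derivatives one repeats the argument of Theorem \ref{thm:tildePhi}(ii): formally differentiate the rescaled FBSDE in the initial condition to obtain a linear FBSDE with bounded coefficients (using Assumptions (B'), (C) to bound $\partial h$, $\partial\rho_i^N$, $\partial^2$ of the data), solve it on short time intervals of length $\delta_0$ depending only on the uniform Lipschitz bounds, then patch over finitely many steps using the $L^\infty$ bound on $\partial_{\boldsymbol{x}}\widetilde{\Psi}^{N,i}$ as the driver of the terminal condition at each step; differentiating once more yields continuity and boundedness of $\partial^2_{\boldsymbol{x}\boldsymbol{x}}\widetilde{\Psi}^{N,i}$. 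A standard verification then shows $\widetilde{\Psi}^{N,i}\in C^{1,2}([0,T]\times\mathbb{R}^N)$ solves the rescaled PDE, and undoing the rescaling — using $\varepsilon_1\le\eta_t\le\varepsilon_1^{-1}$ from \eqref{eq:eta} — transfers this to $\Psi^{N,i}$ solving \eqref{Psi} with bounded first and second derivatives.

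For uniqueness I would argue exactly as in Theorem \ref{thm:tildePhi}: given any classical solution $\bar{\Psi}^{N,i}$ with bounded gradient, plug it into the forward SDE obtained by closing \eqref{tranNNCE} with $\bar{\Psi}$, set $\bar{\varphi}^{N,i}_t:=\bar{\Psi}^{N,i}(t,\cdot)$ evaluated along the resulting trajectory, check via Itô's formula and the PDE that this produces a strong solution of the FBSDE \eqref{tranNNCE}, and conclude from the uniqueness half of \cite[Theorem 2.6]{Delarue2002} that $\bar{\Psi}^{N,i}$ coincides with $\Psi^{N,i}$.

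The main obstacle is verifying that the patching/iteration argument for the derivative FBSDE produces bounds that are genuinely uniform in $N$ where it matters — the Lipschitz and $C^2$ bounds on $h\circ k_i^N$ must stay $N$-independent (which is exactly what Lemma \ref{lem:implicit} and Assumption (B') buy us), but the number of Brownian motions grows with $N$, so one must be careful that the short-time horizon $\delta_0$ and the resulting regularity estimates depend only on the structural constants ($A$, $B$, $R$, $Q$, $G$, $\sigma$, $\sigma_0$, $\varepsilon_0$, $\varepsilon_1$, the $C^2$ bounds on $f=b=0$, $l$, $g$, $h$) and not on $N$. Since here $N$ is fixed in the statement this is not strictly required for the theorem as stated, but it is the subtle point that the (omitted) proof must not gloss over, and it is precisely the feature that will be exploited for the convergence results in section 5.
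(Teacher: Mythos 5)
Your proposal is correct and follows essentially the same route the paper intends: the paper omits the proof of Theorem \ref{thm:Psi} precisely because it is the $N$-player analogue of Theorems \ref{thm:Phi} and \ref{thm:tildePhi}, i.e. the exponential rescaling to the non-degenerate system \eqref{tranNNCE}, well-posedness and Lipschitz decoupling field via \cite[Theorem 2.6, Corollary 2.8]{Delarue2002}, the differentiated FBSDE with short-time patching for $C^{1,2}$ regularity, and uniqueness by feeding a second classical solution back into the FBSDE. Your closing caveat is also consistent with the paper, which explicitly remarks after Theorem \ref{thm:vecnashsystem} that the derivative bounds need not be uniform in $N$, so no $N$-uniformity is required for the statement as given.
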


\begin{theorem}\label{thm:vecnashsystem}
Suppose that Assumptions (A), (B'), (C) hold and $\sigma>0$. Then $u^{N,i}(t,\boldsymbol{x}):=P_tx^i+\Psi^{N,i}(t,\boldsymbol{x})\in C^{1,2}([0,T]\times\mathbb R^N)$, $1\leq i\leq N$, is the unique classical solution to \eqref{eq:vecnashsystem} with bounded $\partial_{x^j}u^{N,i}$ and $\partial_{x^kx^j}u^{N,i}$, $1\leq i,j,k\leq N$, where $\Psi^{N,i}(t,\boldsymbol{x})$, $1\leq i\leq N$, are given in Theorem \ref{thm:Psi}.
\end{theorem}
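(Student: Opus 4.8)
The plan is to mirror the proofs of Theorems \ref{thm:Phi} and \ref{thm:vecmaster} at the level of the $N$-particle system, using Theorem \ref{thm:Psi} in place of Theorem \ref{thm:Phi} and the well-posedness of the $N$-coupled FBSDE \eqref{NFBSDE} from Theorem \ref{thm:SMP} as the input for uniqueness. For existence I would take $P$ to be the unique bounded solution of the Riccati equation \eqref{P} and $\Psi^{N,i}$, $1\le i\le N$, the classical solutions of \eqref{Psi} furnished by Theorem \ref{thm:Psi}, with bounded first and second spatial derivatives, set $u^{N,i}(t,\boldsymbol{x}):=P_tx^i+\Psi^{N,i}(t,\boldsymbol{x})$, and check directly that it solves \eqref{eq:vecnashsystem}. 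Since $P\in C^1([0,T])$, one has $u^{N,i}\in C^{1,2}([0,T]\times\mathbb R^N)$ with $\partial_{x^i}u^{N,i}=P_t+\partial_{x^i}\Psi^{N,i}$, $\partial_{x^j}u^{N,i}=\partial_{x^j}\Psi^{N,i}$ for $j\ne i$ and $\partial_{x^kx^j}u^{N,i}=\partial_{x^kx^j}\Psi^{N,i}$, all bounded, and the terminal condition $u^{N,i}(T,\boldsymbol{x})=Gx^i+Gg(\nu^{N,i}_{\boldsymbol{x}})=G(x^i+g(\nu^{N,i}_{\boldsymbol{x}}))$ holds. The only point to observe is that the nonlocal coefficient $h(k^N_j(t,\nu^{N,1}_{\boldsymbol{x}},\ldots,\nu^{N,N}_{\boldsymbol{x}},\lambda^{N,1}_{\boldsymbol{\Psi}(t,\boldsymbol{x})},\ldots,\lambda^{N,N}_{\boldsymbol{\Psi}(t,\boldsymbol{x})}))$ appearing in \eqref{eq:vecnashsystem} is verbatim the one appearing in \eqref{Psi}, so the substitution leaves it unchanged. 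Using the formulas for $\partial_{x^j}u^{N,i}$ above, the transport term $\sum_{j=1}^N\partial_{x^j}u^{N,i}[Ax^j-B^2R^{-1}u^{N,j}-Bh(k^N_j(\cdots))]$ splits as $P_t[Ax^i-B^2R^{-1}u^{N,i}-Bh(k^N_i(\cdots))]$ plus $\sum_{j=1}^N\partial_{x^j}\Psi^{N,i}[(A-B^2R^{-1}P_t)x^j-B^2R^{-1}\Psi^{N,j}-Bh(k^N_j(\cdots))]$; collecting terms in \eqref{eq:vecnashsystem}, the coefficient of $x^i$ is $\dot P_t+2AP_t-B^2R^{-1}P_t^2+Q$, which vanishes by \eqref{P}, while the remaining terms reproduce exactly the left-hand side of \eqref{Psi} and therefore vanish. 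This shows $u^{N,i}$ is a classical solution of \eqref{eq:vecnashsystem} with the stated bounded derivatives.

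\textbf{Uniqueness.} For uniqueness I would use, as in Theorem \ref{thm:vecmaster}, that $u^{N,i}$ is the decoupling field of the $N$-coupled FBSDE \eqref{NFBSDE}, which is uniquely solvable by Theorem \ref{thm:SMP} (this is where the non-degeneracy $\sigma>0$ is used). Let $\bar u^{N,i}\in C^{1,2}([0,T]\times\mathbb R^N)$, $1\le i\le N$, be another classical solution with bounded first and second spatial derivatives. Fix $(t_0,\boldsymbol{x}_0)\in[0,T]\times\mathbb R^N$ and consider the $N$-dimensional SDE obtained from the forward dynamics of \eqref{NFBSDE} by substituting $y^{*,j}_t$ with $\bar u^{N,j}(t,\bar{\boldsymbol{x}}_t)$ (and keeping the nonlocal coefficient in the form it has in \eqref{eq:vecnashsystem}), with initial datum $\bar{\boldsymbol{x}}_{t_0}=\boldsymbol{x}_0$; its coefficients are Lipschitz, since $\bar u^{N,j}$ has bounded gradient and $h,k^N_j,\boldsymbol{\Psi}$ are Lipschitz, so it has a unique strong solution $\bar{\boldsymbol{x}}$. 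Setting $\bar y^i_t:=\bar u^{N,i}(t,\bar{\boldsymbol{x}}_t)$, $\bar z^{i,j}_t:=\sigma\partial_{x^j}\bar u^{N,i}(t,\bar{\boldsymbol{x}}_t)$ and $\bar z^{0,i}_t:=\sigma_0\sum_{\tau=1}^N\partial_{x^\tau}\bar u^{N,i}(t,\bar{\boldsymbol{x}}_t)$, It\^o's formula together with \eqref{eq:vecnashsystem} shows that $(\bar{\boldsymbol{x}},\bar{\boldsymbol{y}},\bar{\boldsymbol{z}},\bar{\boldsymbol{z}}^{0})$ is a strong solution of \eqref{NFBSDE}; unlike the uniqueness argument for the master equation, no mollification is needed here since $\bar u^{N,i}$ is already $C^{1,2}$. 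The unique solvability of \eqref{NFBSDE} then forces $\bar u^{N,i}(t_0,\boldsymbol{x}_0)=\bar y^i_{t_0}=y^i_{t_0}=u^{N,i}(t_0,\boldsymbol{x}_0)$, and, $(t_0,\boldsymbol{x}_0)$ being arbitrary, $\bar u^{N,i}\equiv u^{N,i}$ for $1\le i\le N$.

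\textbf{Main obstacle.} I expect the argument to be entirely routine given Theorems \ref{thm:Psi} and \ref{thm:SMP}; the only real care is the bookkeeping of the empirical averages $\nu^{N,i}_{\boldsymbol{x}}$ and $\lambda^{N,i}_{\boldsymbol{\Psi}(t,\boldsymbol{x})}$ through the ansatz $u^{N,i}=P_tx^i+\Psi^{N,i}$ — in particular, that the nonlocal coefficients in \eqref{eq:vecnashsystem} and \eqref{Psi} agree and that along solutions $y^{*,i}_t=P_tx^{*,i}_t+\Psi^{N,i}(t,\boldsymbol{x}^{*}_t)$, so that $u^{N,i}$ really is the decoupling field of \eqref{NFBSDE} — after which the cancellation of the $x^i$-terms is immediate from \eqref{P}, the rest from \eqref{Psi}, and uniqueness from Theorem \ref{thm:SMP}.
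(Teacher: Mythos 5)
The paper omits the proof of this theorem, stating only that it parallels Theorems \ref{thm:Phi} and \ref{thm:vecmaster}; your proposal is exactly that parallel, and the existence half is correct and complete: the terminal condition, the splitting of the transport term, the cancellation of the $x^i$-coefficient via the Riccati equation \eqref{P}, and the reduction of the remainder to the left-hand side of \eqref{Psi} all check out, as does the observation that the nonlocal coefficient in \eqref{eq:vecnashsystem} is literally the one in \eqref{Psi} (it is written in terms of $\boldsymbol{\Psi}$, not of $\boldsymbol{u}$).

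The uniqueness half needs one repair. The quadruple $(\bar{\boldsymbol{x}},\bar{\boldsymbol{y}},\bar{\boldsymbol{z}},\bar{\boldsymbol{z}}^0)$ you build from $\bar u^{N,i}$ solves the FBSDE whose forward drift carries the \emph{frozen} nonlocal term $h\big(k_i^N(t,\nu^{N,1}_{\bar{\boldsymbol{x}}_t},\ldots,\lambda^{N,1}_{\boldsymbol{\Psi}(t,\bar{\boldsymbol{x}}_t)},\ldots)\big)$, whereas \eqref{NFBSDE} carries $h\big(\rho_i^N(\Delta^{N,1}_{*,t},\ldots)\big)$ with $\Delta^{N,j}_{*,t}=-R^{-1}B\lambda^{N,j}_{\boldsymbol{y}^*_t}$. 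These coincide only if $\lambda^{N,j}_{\bar{\boldsymbol{y}}_t}=P_t\nu^{N,j}_{\bar{\boldsymbol{x}}_t}+\lambda^{N,j}_{\boldsymbol{\Psi}(t,\bar{\boldsymbol{x}}_t)}$, i.e.\ precisely the identity $\bar u^{N,l}=P_tx^l+\Psi^{N,l}$ (in averaged form) that you are trying to prove, so the assertion that your quadruple ``is a strong solution of \eqref{NFBSDE}'' is circular as written. The fix is immediate: apply the uniqueness instead to the frozen FBSDE itself, which is well-posed by the same $\eta$-transform plus \cite[Theorem 2.6]{Delarue2002} argument used in Theorem \ref{thm:SMP} (its nonlocal term is now a bounded Lipschitz function of $\boldsymbol{x}$ alone), and note that the solution $(\boldsymbol{x}^*,\boldsymbol{y}^*)$ of \eqref{NFBSDE} also solves the frozen system because along it $y^{*,l}_t=P_tx^{*,l}_t+\varphi^{*,l}_t$ with $\varphi^{*,l}_t=\Psi^{N,l}(t,\boldsymbol{x}^*_t)$, so the two nonlocal coefficients agree there. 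With this substitution your conclusion $\bar u^{N,i}(t_0,\boldsymbol{x}_0)=u^{N,i}(t_0,\boldsymbol{x}_0)$ follows as you describe.
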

\begin{remark}
In general, we do not have a uniform bound, uniformly in $N$, for the derivatives $\partial_{x^j}\Psi^{N,i}$, $\partial_{x^kx^j}\Psi^{N,i}$ $\partial_{x^j}u^{N,i}$ and $\partial_{x^kx^j}u^{N,i}$ for any $1\leq i,j,k\leq N$ in Theorems \ref{thm:Psi} and \ref{thm:vecnashsystem}.
\end{remark}

\section{Convergence}
In this section, we will focus on showing the convergence from the $N$-player game to the mean field game under the Assumptions (A), (B'), (C) with $f=b=0$ and $\sigma>0$. It is worthy to mention that, unlike the mean field game, the open-loop Nash equilibria are different from the closed-loop Nash equilibria in $N$-player game. It would be extremely hard for us to prove the convergence for the closed-loop Nash equilibria from the $N$-player game to the mean field game since the well-posedness of the Nash system for value function remains open with both running and terminal costs having quadratic growth. Instead, we are able to show the convergence for open-loop Nash equilibria via the $N$-coupled PDEs system \eqref{Psi} as well as their corresponding values. 

The main results of this section include two parts. In the first part, we investigate that the solution $\{\Psi^{N,i}\}_{1\leq i \leq N}$ to the $N$-coupled PDEs system \eqref{Psi} converges to the solution $\Phi$ to the PDE \eqref{Phi} in some suitable sense. It allows us to show the convergence for the open-loop Nash equilibrium from the $N$-player game to the mean field game. As a byproduct, we are able to show the convergence of its corresponding value. The key idea of proof is that the suitable finite-dimensional projections of $\Phi$ is an approximate solution to the $N$-coupled PDEs system \eqref{Psi}. In the second part, we verify a propagation of chaos property for the associated optimal trajectories. 
 
 \subsection{Convergence of Nash equilibria}

For $1\leq i \leq N$, we introduce the finite dimensional projections of $\Phi$: for any $\boldsymbol{x}=(x^1,\ldots,x^N)\in\mathbb{R}^N$,
\begin{equation}\label{defu}
\phi^{N,i}(t,\boldsymbol{x})=\Phi(t,\nu^{N,i}_{\boldsymbol{x}})~~\text{with}~~\nu^{N,i}_{\boldsymbol{x}}=\frac{1}{N-1}\underset{j\neq i}{\sum} x^j,
\end{equation}
where $\Phi$ is the unique solution to the PDE \eqref{Phi}. In view of the regularity property of $\Phi$, we have the following smoothness result for $\phi^{N,i}$. Since the proof is rather easy, we omit it.
\begin{proposition}\label{upartial}
Suppose that Assumptions (A), (B), (C) hold and $\sigma>0$. For $1\leq i \leq N$ and $t\in [0,T]$, $\phi^{N,i}(t,\cdot)$ is $C^2(\mathbb R^N)$ with the following first and second order partial derivatives in $\boldsymbol{x}$: $\partial_{x^i}\phi^{N,i}(t,\boldsymbol{x})=0$;

when $j\neq i$
\begin{equation*}
\partial_{x^j}\phi^{N,i}(t,\boldsymbol{x})=\frac{1}{N-1}\partial_{\nu}\Phi(t,\nu^{N,i}_{\boldsymbol{x}}),\quad
\partial_{x^ix^j}\phi^{N,i}(t,\boldsymbol{x})=0,\quad
\partial_{x^jx^j}\phi^{N,i}(t,\boldsymbol{x})=\frac{1}{(N-1)^2}\partial_{\nu\nu}\Phi(t,\nu^{N,i}_{\boldsymbol{x}});
\end{equation*}

when $j\neq \tau\neq i$
\[\partial_{x^j}\partial_{x^\tau}\phi^{N,i}(t,\boldsymbol{x})=\frac{1}{(N-1)^2}\partial_{\nu\nu}\Phi(t,\nu^{N,i}_{\boldsymbol{x}}).\]
\end{proposition}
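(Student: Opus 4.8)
The statement to prove is Proposition~\ref{upartial}: the finite-dimensional projection $\phi^{N,i}(t,\boldsymbol{x}) = \Phi(t,\nu^{N,i}_{\boldsymbol{x}})$ is $C^2$ in $\boldsymbol{x}$ with the stated derivative formulas. This is really just a chain-rule computation, so my plan is to exploit the fact that $\phi^{N,i}$ depends on $\boldsymbol{x}$ only through the scalar quantity $\nu^{N,i}_{\boldsymbol{x}} = \frac{1}{N-1}\sum_{j\neq i} x^j$, which is a linear function of $\boldsymbol{x}$ whose partial derivatives are trivial: $\partial_{x^i}\nu^{N,i}_{\boldsymbol{x}} = 0$ and $\partial_{x^j}\nu^{N,i}_{\boldsymbol{x}} = \frac{1}{N-1}$ for $j\neq i$.

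First I would invoke Theorem~\ref{thm:Phi}, which gives that $\Phi \in C^{1,2}([0,T]\times\mathbb{R})$ with bounded $\partial_\nu\Phi$ and $\partial_{\nu\nu}\Phi$. In particular $\Phi(t,\cdot)\in C^2(\mathbb{R})$ for each fixed $t$. Then composition with the smooth (indeed linear) map $\boldsymbol{x}\mapsto \nu^{N,i}_{\boldsymbol{x}}$ preserves $C^2$ regularity, so $\phi^{N,i}(t,\cdot)\in C^2(\mathbb{R}^N)$. Next I would apply the chain rule: $\partial_{x^j}\phi^{N,i}(t,\boldsymbol{x}) = \partial_\nu\Phi(t,\nu^{N,i}_{\boldsymbol{x}})\cdot \partial_{x^j}\nu^{N,i}_{\boldsymbol{x}}$, which is $0$ when $j=i$ and $\frac{1}{N-1}\partial_\nu\Phi(t,\nu^{N,i}_{\boldsymbol{x}})$ when $j\neq i$. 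Differentiating once more, $\partial_{x^\tau}\partial_{x^j}\phi^{N,i}(t,\boldsymbol{x}) = \partial_{\nu\nu}\Phi(t,\nu^{N,i}_{\boldsymbol{x}})\cdot \partial_{x^j}\nu^{N,i}_{\boldsymbol{x}}\cdot \partial_{x^\tau}\nu^{N,i}_{\boldsymbol{x}}$ (the term with $\partial_{x^\tau}\partial_{x^j}\nu^{N,i}_{\boldsymbol{x}}$ vanishes since $\nu^{N,i}_{\boldsymbol{x}}$ is linear in $\boldsymbol{x}$). Reading off the cases: this is $0$ whenever $i\in\{j,\tau\}$, and equals $\frac{1}{(N-1)^2}\partial_{\nu\nu}\Phi(t,\nu^{N,i}_{\boldsymbol{x}})$ when both $j,\tau\neq i$, which covers both the $j=\tau\neq i$ and the $j\neq\tau\neq i$ cases stated.

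There is no real obstacle here — the statement is elementary and the paper itself says the proof is omitted because it is easy. The only thing worth being careful about is making explicit that $\nu^{N,i}_{\boldsymbol{x}}$ is a linear (hence $C^\infty$) function of $\boldsymbol{x}$ with second derivatives identically zero, which is what makes the second-order formulas clean, and citing Theorem~\ref{thm:Phi} for the $C^2$-regularity and boundedness of the derivatives of $\Phi$ so that $\phi^{N,i}$ inherits these properties.

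\begin{proof}
Fix $t\in[0,T]$ and $1\leq i\leq N$. By Theorem~\ref{thm:Phi}, $\Phi(t,\cdot)\in C^2(\mathbb R)$ with bounded $\partial_\nu\Phi$ and $\partial_{\nu\nu}\Phi$. The map $\boldsymbol{x}\mapsto\nu^{N,i}_{\boldsymbol{x}}=\frac{1}{N-1}\sum_{j\neq i}x^j$ is linear in $\boldsymbol{x}$, hence $C^\infty$, with
\[
\partial_{x^i}\nu^{N,i}_{\boldsymbol{x}}=0,\qquad \partial_{x^j}\nu^{N,i}_{\boldsymbol{x}}=\frac{1}{N-1}\ \text{ for }j\neq i,\qquad \partial_{x^\tau}\partial_{x^j}\nu^{N,i}_{\boldsymbol{x}}=0\ \text{ for all }j,\tau.
\]
Since $\phi^{N,i}(t,\boldsymbol{x})=\Phi(t,\nu^{N,i}_{\boldsymbol{x}})$ is the composition of the $C^2$ function $\Phi(t,\cdot)$ with this linear map, we have $\phi^{N,i}(t,\cdot)\in C^2(\mathbb R^N)$. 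By the chain rule,
\[
\partial_{x^j}\phi^{N,i}(t,\boldsymbol{x})=\partial_\nu\Phi(t,\nu^{N,i}_{\boldsymbol{x}})\,\partial_{x^j}\nu^{N,i}_{\boldsymbol{x}},
\]
which is $0$ when $j=i$ and equals $\frac{1}{N-1}\partial_\nu\Phi(t,\nu^{N,i}_{\boldsymbol{x}})$ when $j\neq i$. Differentiating once more and using $\partial_{x^\tau}\partial_{x^j}\nu^{N,i}_{\boldsymbol{x}}=0$,
\[
\partial_{x^\tau}\partial_{x^j}\phi^{N,i}(t,\boldsymbol{x})=\partial_{\nu\nu}\Phi(t,\nu^{N,i}_{\boldsymbol{x}})\,\partial_{x^j}\nu^{N,i}_{\boldsymbol{x}}\,\partial_{x^\tau}\nu^{N,i}_{\boldsymbol{x}}.
\]
If $i\in\{j,\tau\}$ this vanishes, giving $\partial_{x^ix^j}\phi^{N,i}(t,\boldsymbol{x})=0$; if $j,\tau\neq i$ it equals $\frac{1}{(N-1)^2}\partial_{\nu\nu}\Phi(t,\nu^{N,i}_{\boldsymbol{x}})$, which covers both the case $j=\tau\neq i$ and the case $j\neq\tau\neq i$. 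This establishes all the stated formulas.
\end{proof}
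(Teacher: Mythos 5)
Your proof is correct and is exactly the elementary chain-rule argument the paper has in mind when it says the proof is "rather easy" and omits it: you correctly cite Theorem \ref{thm:Phi} for the $C^2$ regularity of $\Phi(t,\cdot)$ and use the linearity of $\boldsymbol{x}\mapsto\nu^{N,i}_{\boldsymbol{x}}$ to read off all the stated first- and second-order formulas. Nothing is missing.
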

The following is a technical lemma needed for the convergence results later.
\begin{lemma}
Let Assumptions (A), (B'), (C) hold and $\sigma>0$. For any $\boldsymbol{x}=(x^1,\ldots,x^N)$ and $\boldsymbol{x}^\prime=(x^{\prime,1},\ldots,x^{\prime,N})\in\mathbb{R}^N$, it has 
\begin{itemize}
\item[\textup{(i)}] \begin{equation}\label{kiphik}
|k_i^N(t,\nu^{N,1}_{\boldsymbol{x}},\ldots,\nu^{N,N}_{\boldsymbol{x}},\lambda^{N,1}_{\boldsymbol{\phi}(t,\boldsymbol{x})},\ldots,\lambda^{N,N}_{\boldsymbol{\phi}(t,\boldsymbol{x})})-k\big(t,\nu^{N,i}_{\boldsymbol{x}},\Phi(t,\nu^{N,i}_{\boldsymbol{x}})\big)|=O\Big( \frac{1}{N}\Big(1+\frac{1}{N}\underset{j\neq i}\sum|x^i-x^j|\Big)     \Big);
\end{equation}
\item[\textup{(ii)}] \begin{equation}\label{kiphikpsi}
\begin{aligned}
&|k_i^N(t,\nu^{N,1}_{\boldsymbol{x}},\ldots,\nu^{N,N}_{\boldsymbol{x}},\lambda^{N,1}_{\boldsymbol{\phi}(t,\boldsymbol{x})},\ldots,\lambda^{N,N}_{\boldsymbol{\phi}(t,\boldsymbol{x})})-k_i^N(t,\nu^{N,1}_{\boldsymbol{x}},\ldots,\nu^{N,N}_{\boldsymbol{x}},\lambda^{N,1}_{\boldsymbol{\Psi}(t,\boldsymbol{x})},\ldots,\lambda^{N,N}_{\boldsymbol{\Psi}(t,\boldsymbol{x})})|\\
=&O\bigg(\frac{1}{N}\Big(1+\frac{1}{N}\underset{j\neq i}\sum|x^i-x^j|\Big)+\frac{1}{N}\underset{j\neq i}\sum|\phi^{N,j}(t,\boldsymbol{x})-\Psi^{N,j}(t,\boldsymbol{x})|\bigg);
\end{aligned}
\end{equation}
\item[\textup{(iii)}] \begin{equation}
\begin{aligned}\label{kinphixpsixprime}
&|k_i^N(t,\nu^{N,1}_{\boldsymbol{x}},\ldots,\nu^{N,N}_{\boldsymbol{x}},\lambda^{N,1}_{\boldsymbol{\phi}(t,\boldsymbol{x})},\ldots,\lambda^{N,N}_{\boldsymbol{\phi}(t,\boldsymbol{x})})-k_i^N(t,\nu^{N,1}_{\boldsymbol{x}^\prime},\ldots,\nu^{N,N}_{\boldsymbol{x}^\prime},\lambda^{N,1}_{\boldsymbol{\Psi}(t,\boldsymbol{x}^\prime)},\ldots,\lambda^{N,N}_{\boldsymbol{\Psi}(t,\boldsymbol{x}^\prime)}) |\\=&O\bigg(\frac{1}{N}\Big(1+\frac{1}{N}\underset{j\neq i}{\sum}|x^i-x^j|+\frac{1}{N}\underset{j\neq i}{\sum}|x^{\prime,i}-x^{\prime,j}|\Big)+\frac{1}{N}\underset{j\neq i}{\sum}|x^j-x^{\prime,j}|+\frac{1}{N}\underset{j\neq i}{\sum} |\phi^j(t,\boldsymbol{x}) -\Psi^j(t,\boldsymbol{x}^\prime)|\bigg),
\end{aligned}
\end{equation}
where $\Phi$ and $\{\Psi^{N,i}\}_{1\leq i\leq N}$ are solutions to equations \eqref{Phi} and \eqref{Psi} respectively.
\end{itemize} 
\end{lemma}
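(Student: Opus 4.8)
The plan is to treat all three estimates as consequences of the uniform Lipschitz regularity of the functions $\rho_i^N$ (with Lipschitz constants independent of $N$, as established after Lemma~\ref{lem:implicit}), combined with the boundedness of $P$, $\partial_\nu\Phi$, $\partial_{\nu\nu}\Phi$, $h$, and $h'$. The key preliminary observation is that the two families of arguments fed into $k_i^N$ and into $k$ differ only in how the ``empirical'' averages compare with the ``representative'' quantity. Concretely, for each $i$ one compares $\nu^{N,i}_{\boldsymbol x}=\frac1{N-1}\sum_{j\neq i}x^j$ against itself (this argument is literally the same on both sides), while the vector $(\lambda^{N,1}_{\boldsymbol\phi(t,\boldsymbol x)},\ldots,\lambda^{N,N}_{\boldsymbol\phi(t,\boldsymbol x)})$ must be compared coordinatewise against the constant vector whose $j$-th entry is $\Phi(t,\nu^{N,i}_{\boldsymbol x})$.

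First I would prove (i). Recall $\lambda^{N,\ell}_{\boldsymbol\phi(t,\boldsymbol x)}=\frac1{N-1}\sum_{j\neq\ell}\phi^{N,j}(t,\boldsymbol x)=\frac1{N-1}\sum_{j\neq\ell}\Phi(t,\nu^{N,j}_{\boldsymbol x})$. Since $\nu^{N,j}_{\boldsymbol x}-\nu^{N,i}_{\boldsymbol x}=\frac1{N-1}(x^i-x^j)$, the Lipschitz bound on $\Phi$ gives $|\phi^{N,j}(t,\boldsymbol x)-\Phi(t,\nu^{N,i}_{\boldsymbol x})|\le \frac{\|\partial_\nu\Phi\|_\infty}{N-1}|x^i-x^j|$, hence $|\lambda^{N,\ell}_{\boldsymbol\phi(t,\boldsymbol x)}-\Phi(t,\nu^{N,i}_{\boldsymbol x})|=O\big(\frac1N(1+\frac1N\sum_{j\neq i}|x^i-x^j|)\big)$ uniformly in $\ell$ (the spurious $O(1/N)$ absorbs the difference between the index sets $\{j\neq\ell\}$ and $\{j\neq i\}$ and the $\frac1{N-1}$ versus $\frac1N$ normalization). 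Plugging this into the uniform Lipschitz estimate for $\rho_i^N$ — noting that on both sides the first group of $N$ arguments $-R^{-1}B[P_t\nu^{N,\ell}_{\boldsymbol x}+\cdots]$ only involve $\nu^{N,\ell}_{\boldsymbol x}$, whose pairwise differences are again $O(\frac1N|x^i-x^j|)$ — and using that $k(t,\nu^{N,i}_{\boldsymbol x},\Phi(t,\nu^{N,i}_{\boldsymbol x}))=\rho\big(-R^{-1}B[P_t\nu^{N,i}_{\boldsymbol x}+\Phi(t,\nu^{N,i}_{\boldsymbol x})]\big)$ is exactly the ``diagonal'' limit of $\rho_i^N$, yields (i). For (ii), I would simply subtract: the first group of $N$ arguments is identical on both sides, so by the uniform Lipschitz property of $\rho_i^N$ the left side is bounded by a constant times $\frac1N\sum_{\ell}|\lambda^{N,\ell}_{\boldsymbol\phi(t,\boldsymbol x)}-\lambda^{N,\ell}_{\boldsymbol\Psi(t,\boldsymbol x)}|$; then the $O(\frac1N)$ term in (ii) comes (redundantly) from the normalization discrepancy and the main term is $\frac1N\sum_{j\neq i}|\phi^{N,j}(t,\boldsymbol x)-\Psi^{N,j}(t,\boldsymbol x)|$. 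Estimate (iii) is the combination: decompose the difference through the intermediate point $k_i^N(t,\nu^{N,1}_{\boldsymbol x},\ldots,\lambda^{N,1}_{\boldsymbol\Psi(t,\boldsymbol x)},\ldots)$, apply (i)+(ii) to control the first step (with $\Phi(t,\nu^{N,i}_{\boldsymbol x})=\phi^{N,i}(t,\boldsymbol x)$), and for the second step use the uniform Lipschitz bound on $\rho_i^N$ again together with $|\nu^{N,\ell}_{\boldsymbol x}-\nu^{N,\ell}_{\boldsymbol x'}|\le\frac1{N-1}\sum_{j\neq\ell}|x^j-x^{\prime,j}|$ and $|\lambda^{N,\ell}_{\boldsymbol\Psi(t,\boldsymbol x)}-\lambda^{N,\ell}_{\boldsymbol\Psi(t,\boldsymbol x')}|\le\frac1{N-1}\sum_{j\neq\ell}|\Psi^{N,j}(t,\boldsymbol x)-\Psi^{N,j}(t,\boldsymbol x')|$, the latter bounded by $\frac1{N-1}\sum_{j\neq\ell}|\phi^{N,j}(t,\boldsymbol x)-\Psi^{N,j}(t,\boldsymbol x')|$ plus Lipschitz-in-$\boldsymbol x$ contributions of $\phi^{N,j}$ that feed back into the $\frac1N\sum|x^j-x^{\prime,j}|$ and $\frac1N(1+\cdots)$ terms.

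The main obstacle is bookkeeping rather than depth: one must keep careful track of the fact that the averages $\lambda^{N,\ell}$ and $\nu^{N,\ell}$ omit the $\ell$-th rather than the $i$-th index, so that each coordinate of the argument vector of $\rho_i^N$ depends on a \emph{different} center point; the Lipschitz constant of $\rho_i^N$ acts on the $\ell^1$-average of the coordinatewise perturbations (this is where the uniform-in-$N$ Lipschitz property from Lemma~\ref{lem:implicit} and the implicit function theorem is essential), and one must check that the resulting bound genuinely collapses to $O(1/N)$ times the stated empirical quantities rather than $O(1)$. Once the indexing is handled consistently, every step is a one-line application of a Lipschitz estimate, so I would present (i) in full and then indicate (ii) and (iii) as straightforward modifications.
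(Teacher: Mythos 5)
Your overall strategy—black-boxing the regularity of $\rho_i^N$ and reducing all three estimates to coordinatewise perturbations of its arguments—differs from the paper's, which never invokes the Lipschitz property of $\rho_i^N$ abstractly. Instead the paper writes down the defining fixed-point relations of type \eqref{muNi} for each set of arguments, subtracts them, and uses Assumption (B') as a contraction to absorb the term $(1-\varepsilon_0)|\mu^{N,i}_{(\boldsymbol{x},\boldsymbol{\phi})}-\mu^i|$ into the left-hand side, handling the cross terms via the auxiliary bound $|\mu^{N,j}_{(\boldsymbol{x},\boldsymbol{\phi}(t,\boldsymbol{x}))}-\mu^{N,i}_{(\boldsymbol{x},\boldsymbol{\phi}(t,\boldsymbol{x}))}|\leq \frac{C}{N-1}(1+|x^i-x^j|)$. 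The difference matters, because the estimate you actually invoke is false as stated. You assert that the Lipschitz constant of $\rho_i^N$ ``acts on the $\ell^1$-average of the coordinatewise perturbations,'' i.e. $|\rho_i^N(\boldsymbol{\Delta})-\rho_i^N(\boldsymbol{\Delta}')|\leq \frac{C}{N}\sum_{\ell}|\Delta^\ell-\Delta^{\prime,\ell}|$. By the implicit function theorem $\partial_{\Delta^\ell}\rho_i^N=[(D_\mu F_t)^{-1}]_{i\ell}$, and while the off-diagonal entries of this inverse are $O(1/N)$, the diagonal entry $[(D_\mu F_t)^{-1}]_{ii}$ equals $1+O(1/N)$: perturbing only the $i$-th argument by $\varepsilon$ moves $\rho_i^N$ by roughly $\varepsilon$, not $\varepsilon/N$. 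Nor does the bare ``Lipschitz constant independent of $N$'' recorded after Lemma \ref{lem:implicit} suffice: its $\ell^\infty$ version fails for (i) (a single large $|x^i-x^\ell|$ is not dominated by $\frac1N\sum_j|x^i-x^j|$), and its $\ell^1$ version overshoots (ii) and (iii) by a factor of $N$.

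What your computation actually requires is the weighted estimate $|\rho_i^N(\boldsymbol{\Delta})-\rho_i^N(\boldsymbol{\Delta}')|\leq C|\Delta^i-\Delta^{\prime,i}|+\frac{C}{N}\sum_{\ell\neq i}|\Delta^\ell-\Delta^{\prime,\ell}|$ with $C$ independent of $N$. This \emph{is} true, and with it your bookkeeping for (i)--(iii) goes through (in (i) the $i$-th perturbation is itself $O(\frac{1}{N^2}\sum_j|x^i-x^j|)$ because $\nu^{N,i}_{\boldsymbol{x}}$ is unchanged; in (ii)--(iii) the $O(1)$ weight on the $i$-th coordinate is what produces the main term $\frac1N\sum_{j\neq i}|\phi^{N,j}-\Psi^{N,j}|$). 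But this weighted estimate is precisely the nontrivial content of the lemma, and you have only flagged it (``one must check that the resulting bound genuinely collapses to $O(1/N)$\dots'') rather than proved it; establishing it amounts either to a Neumann-series analysis of $(D_\mu F_t)^{-1}$ or to the paper's route of subtracting the relations $\mu^i+\frac{1}{N-1}\sum_{j\neq i}h(\mu^j)=\Delta^i$ and using $|h'|\leq 1-\varepsilon_0$. Your observation that $k(t,\nu^{N,i}_{\boldsymbol{x}},\Phi(t,\nu^{N,i}_{\boldsymbol{x}}))$ is the exact diagonal value $\rho_i^N(\bar\Delta,\ldots,\bar\Delta)=\rho(\bar\Delta)$ is correct and is a clean way to set up (i), but the proof as proposed has a genuine gap at its central step.
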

\begin{proof}
\begin{itemize}
\item[\textup{(i)}] We introduce 
\begin{equation}\label{eq:muNixphi}
\mu_{(\boldsymbol{x},\boldsymbol{\phi}(t,\boldsymbol{x}))}^{N,i}:=k_i^N(t,\nu^{N,1}_{\boldsymbol{x}},\ldots,\nu^{N,N}_{\boldsymbol{x}},\lambda^{N,1}_{\boldsymbol{\phi}(t,\boldsymbol{x})},\ldots,\lambda^{N,N}_{\boldsymbol{\phi}(t,\boldsymbol{x})}),\quad\text{and}\quad \mu^i:=k\big(t,\nu^{N,i}_{\boldsymbol{x}},\Phi(t,\nu^{N,i}_{\boldsymbol{x}})\big),
\end{equation}
by the definitions of $k_i^N$, for $1\leq i\leq N$, and $k$, we have the following two equations hold: (using $\phi^{N,j}(t,\boldsymbol{x})=\Phi(t,\nu^{N,j}_{\boldsymbol{x}})
$)
\begin{equation}\label{muxphi}
\mu_{(\boldsymbol{x},\boldsymbol{\phi}(t,\boldsymbol{x}))}^{N,i}+\frac{1}{N-1}\underset{j\neq i}{\sum}h\Big(\mu_{(\boldsymbol{x},\boldsymbol{\phi}(t,\boldsymbol{x}))}^{N,j} \Big)
=-R^{-1}B\Big[P_t\nu^{N,i}_{\boldsymbol{x}}+\frac{1}{N-1}\underset{j\neq i}{\sum}\Phi(t,\nu^{N,j}_{\boldsymbol{x}})\Big]
\end{equation}
and
\begin{equation}\label{mui}
\mu^i+h(\mu^i)=-R^{-1}B\Big[P_t\nu^{N,i}_{\boldsymbol{x}}+\Phi(t,\nu^{N,i}_{\boldsymbol{x}})\Big]. 
\end{equation}

Noting that $\mu_{(\boldsymbol{x},\boldsymbol{\phi}(t,\boldsymbol{x}))}^{N,i}$ can be rewritten as 
\begin{equation*}
\mu_{(\boldsymbol{x},\boldsymbol{\phi}(t,\boldsymbol{x}))}^{N,i}=\frac{1}{N-1}\underset{j\neq i}{\sum}\Big[-R^{-1}B\Big(P_tx^j+\Phi(t,\nu^{N,j}_{\boldsymbol{x}})\Big)-h\Big(\mu_{(\boldsymbol{x},\boldsymbol{\phi}(t,\boldsymbol{x}))}^{N,j} \Big)\Big],
\end{equation*}
thus by the boundedness of $\Phi$ and $h$, it implies
\begin{equation}\label{muijxphi}
|\mu_{(\boldsymbol{x},\boldsymbol{\phi}(t,\boldsymbol{x}))}^{N,j}-\mu_{(\boldsymbol{x},\boldsymbol{\phi}(t,\boldsymbol{x}))}^{N,i}|\leq \frac{C}{N-1}(1+|x^i-x^j|).
\end{equation}

From \eqref{muxphi}, \eqref{mui} and Assumption (B'), we have 
\begin{equation*}
\begin{aligned}
&|\mu_{(\boldsymbol{x},\boldsymbol{\phi}(t,\boldsymbol{x}))}^{N,i}-\mu^i|\\
\leq& |\frac{1}{N-1}\underset{j\neq i}{\sum}h\Big(\mu_{(\boldsymbol{x},\boldsymbol{\phi}(t,\boldsymbol{x}))}^{N,j} \Big)
-h(\mu^i)|+C|\frac{1}{N-1}\underset{j\neq i}{\sum}\Phi(t,\nu^{N,j}_{\boldsymbol{x}})-\Phi(t,\nu^{N,i}_{\boldsymbol{x}})|\\
\leq &\frac{1-\varepsilon_0}{N-1}\underset{j\neq i}{\sum}|\mu_{(\boldsymbol{x},\boldsymbol{\phi}(t,\boldsymbol{x}))}^{N,j}-\mu^i|+\frac{C}{N-1}\Big(\frac{1}{N-1}\underset{j\neq i}{\sum}|x^i-x^j|\Big)\\
\leq & (1-\varepsilon_0)|\mu_{(\boldsymbol{x},\boldsymbol{\phi}(t,\boldsymbol{x}))}^{N,i}-\mu^i|+ \frac{1}{N-1}\underset{j\neq i}{\sum}|\mu_{(\boldsymbol{x},\boldsymbol{\phi}(t,\boldsymbol{x}))}^{N,j}-\mu_{(\boldsymbol{x},\boldsymbol{\phi}(t,\boldsymbol{x}))}^{N,i}|+\frac{C}{N-1}\Big(\frac{1}{N-1}\underset{j\neq i}{\sum}|x^i-x^j|\Big).
\end{aligned}
\end{equation*}
By \eqref{muijxphi}, one can get 
\begin{equation*}
\begin{aligned}
|\mu_{(\boldsymbol{x},\boldsymbol{\phi}(t,\boldsymbol{x}))}^{N,i}-\mu^i|\leq &\frac{C}{N-1}\underset{j\neq i}{\sum}|\mu_{(\boldsymbol{x},\boldsymbol{\phi}(t,\boldsymbol{x}))}^{N,j}-\mu_{(\boldsymbol{x},\boldsymbol{\phi}(t,\boldsymbol{x}))}^{N,i}|+\frac{C}{N-1}\Big(\frac{1}{N-1}\underset{j\neq i}{\sum}|x^i-x^j|\Big)\\
\leq & \frac{C}{N-1}\Big(1+\frac{1}{N-1}\underset{j\neq i}{\sum}|x^i-x^j|\Big)=O\Big( \frac{1}{N}\Big(1+\frac{1}{N}\underset{j\not = i}{\sum}|x^i-x^j|\Big)     \Big),
\end{aligned}
\end{equation*}
which completes the proof of  \eqref{kiphik}. 

\item[\textup{(ii)}] We also introduce 
\begin{equation}\label{munipsix}
\mu_{(\boldsymbol{x},\boldsymbol{\Psi}(t,\boldsymbol{x}))}^{N,i}:=k_i^N(t,\nu^{N,1}_{\boldsymbol{x}},\ldots,\nu^{N,N}_{\boldsymbol{x}},\lambda^{N,1}_{\boldsymbol{\Psi}(t,\boldsymbol{x})},\ldots,\lambda^{N,N}_{\boldsymbol{\Psi}(t,\boldsymbol{x})}),
\end{equation}
which implies
\begin{equation}\label{psini}
\mu_{(\boldsymbol{x},\boldsymbol{\Psi}(t,\boldsymbol{x}))}^{N,i}+\frac{1}{N-1}\underset{j\neq i}{\sum}h\Big(\mu_{(\boldsymbol{x},\boldsymbol{\Psi}(t,\boldsymbol{x}))}^{N,j} \Big)
=-R^{-1}B\Big[P_t\nu^{N,i}_{\boldsymbol{x}}+\frac{1}{N-1}\underset{j\neq i}{\sum}\Psi^{N,j}(t,\boldsymbol{x})\Big].
\end{equation}

Moreover, we have that \eqref{muxphi} can be rewritten as 
\begin{equation}\label{phini}
\mu_{(\boldsymbol{x},\boldsymbol{\phi}(t,\boldsymbol{x}))}^{N,i}+\frac{1}{N-1}\underset{j\neq i}{\sum}h\Big(\mu_{(\boldsymbol{x},\boldsymbol{\phi}(t,\boldsymbol{x}))}^{N,j} \Big)
=-R^{-1}B\Big[P_t\nu^{N,i}_{\boldsymbol{x}}+\frac{1}{N-1}\underset{j\neq i}{\sum}\phi^{N,j}(t,\boldsymbol{x})\Big].
\end{equation}

By \eqref{psini},\eqref{phini} and Assumption (B'), we deduce
\begin{equation}
\begin{aligned}\label{(ii)proof}
&|\mu_{(\boldsymbol{x},\boldsymbol{\phi}(t,\boldsymbol{x}))}^{N,i}-\mu_{(\boldsymbol{x},\boldsymbol{\Psi}(t,\boldsymbol{x}))}^{N,i}|\\
\leq &\frac{1}{N-1}\underset{j\neq i}{\sum}|h\Big(\mu_{(\boldsymbol{x},\boldsymbol{\phi}(t,\boldsymbol{x}))}^{N,j} \Big)-h\Big(\mu_{(\boldsymbol{x},\boldsymbol{\Psi}(t,\boldsymbol{x}))}^{N,j} \Big)|+\frac{C}{N-1}\underset{j\neq i}{\sum}|\phi^{N,j}(t,\boldsymbol{x})-\Psi^{N,j}(t,\boldsymbol{x})|\\
\leq &\frac{1-\varepsilon_0}{N-1}\underset{j\neq i}{\sum}|\mu_{(\boldsymbol{x},\boldsymbol{\phi}(t,\boldsymbol{x}))}^{N,j}-\mu_{(\boldsymbol{x},\boldsymbol{\Psi}(t,\boldsymbol{x}))}^{N,j}|+\frac{C}{N-1}\underset{j\neq i}{\sum}|\phi^{N,j}(t,\boldsymbol{x})-\Psi^{N,j}(t,\boldsymbol{x})|\\
\leq &(1-\varepsilon_0)|\mu_{(\boldsymbol{x},\boldsymbol{\phi}(t,\boldsymbol{x}))}^{N,i}-\mu_{(\boldsymbol{x},\boldsymbol{\Psi}(t,\boldsymbol{x}))}^{N,i}|+\frac{1}{N-1}\underset{j\neq i}{\sum}|\mu_{(\boldsymbol{x},\boldsymbol{\phi}(t,\boldsymbol{x}))}^{N,j}-\mu_{(\boldsymbol{x},\boldsymbol{\phi}(t,\boldsymbol{x}))}^{N,i}|\\
&+\frac{1}{N-1}\underset{j\neq i}{\sum}|\mu_{(\boldsymbol{x},\boldsymbol{\Psi}(t,\boldsymbol{x}))}^{N,j}-\mu_{(\boldsymbol{x},\boldsymbol{\Psi}(t,\boldsymbol{x}))}^{N,i}|+\frac{C}{N-1}\underset{j\neq i}{\sum}|\phi^{N,j}(t,\boldsymbol{x})-\Psi^{N,j}(t,\boldsymbol{x})|.
\end{aligned}
\end{equation}

Recalling \eqref{muijxphi} and the boundedness of $\Psi^{N,i}$ and $h$, we can get 
\begin{equation}\label{psinjni}
|\mu_{(\boldsymbol{x},\boldsymbol{\Psi}(t,\boldsymbol{x}))}^{N,j}-\mu_{(\boldsymbol{x},\boldsymbol{\Psi}(t,\boldsymbol{x}))}^{N,i}|\leq \frac{C}{N-1}(1+|x^i-x^j|).
\end{equation}

Based on the above estimates, we obtain 
\begin{equation*}
|\mu_{(\boldsymbol{x},\boldsymbol{\phi}(t,\boldsymbol{x}))}^{N,i}-\mu_{(\boldsymbol{x},\boldsymbol{\Psi}(t,\boldsymbol{x}))}^{N,i}|\leq  \frac{C}{N-1}\Big(1+\frac{1}{N-1}\underset{j\neq i}{\sum}|x^i-x^j|\Big)+\frac{C}{N-1}\underset{j\neq i}{\sum}|\phi^{N,j}(t,\boldsymbol{x})-\Psi^{N,j}(t,\boldsymbol{x})|,
\end{equation*}
which is exactly \eqref{kiphikpsi}. 
\item[\textup{(iii)}]
For any $\boldsymbol{x}^\prime=(x^{\prime,1},\ldots,x^{\prime,N})\in\mathbb{R}^N$, we have $\mu_{(\boldsymbol{x}^\prime,\boldsymbol{\Psi}(t,\boldsymbol{x}^\prime))}^{N,i}$ satisfies \eqref{munipsix} and \eqref{psini} with $\boldsymbol{x}$ repalced by $\boldsymbol{x}^\prime$.
Similar to \eqref{(ii)proof}, we can obtain
\begin{equation*}
\begin{aligned}
|\mu_{(\boldsymbol{x},\boldsymbol{\phi}(t,\boldsymbol{x}))}^{N,i}-\mu_{(\boldsymbol{x}^\prime,\boldsymbol{\Psi}(t,\boldsymbol{x}^\prime))}^{N,i}|\leq &\frac{1}{N-1}\underset{j\neq i}{\sum}|\mu_{(\boldsymbol{x},\boldsymbol{\phi}(t,\boldsymbol{x}))}^{N,j}-\mu_{(\boldsymbol{x},\boldsymbol{\phi}(t,\boldsymbol{x}))}^{N,i}|+\frac{1}{N-1}\underset{j\neq i}{\sum}|\mu_{(\boldsymbol{x}^\prime,\boldsymbol{\Psi}(t,\boldsymbol{x}^\prime))}^{N,j}-\mu_{(\boldsymbol{x}^\prime,\boldsymbol{\Psi}(t,\boldsymbol{x}^\prime))}^{N,i}|\\
&+\frac{C}{N-1}\underset{j\neq i}{\sum}|x^j-x^{\prime,j}|+\frac{C}{N-1}\underset{j\neq i}{\sum}|\phi^{N,j}(t,\boldsymbol{x})-\Psi^{N,j}(t,\boldsymbol{x}^\prime)|.
\end{aligned}
\end{equation*}

Noting \eqref{muijxphi} and $|\mu_{(\boldsymbol{x},\boldsymbol{\Psi}(t,\boldsymbol{x}^\prime))}^{N,j}-\mu_{(\boldsymbol{x},\boldsymbol{\Psi}(t,\boldsymbol{x}^\prime))}^{N,i}|\leq \frac{C}{N-1}(1+|x^{\prime,i}-x^{\prime,j}|)$, we have 
\begin{equation*}
\begin{aligned}
|\mu_{(\boldsymbol{x},\boldsymbol{\phi}(t,\boldsymbol{x}))}^{N,i}-\mu_{(\boldsymbol{x}^\prime,\boldsymbol{\Psi}(t,\boldsymbol{x}^\prime))}^{N,i}|\leq &\frac{C}{N-1}\Big(1+\frac{1}{N-1}\underset{j\neq i}{\sum}|x^i-x^j|+\frac{1}{N-1}\underset{j\neq i}{\sum}|x^{\prime,i}-x^{\prime,j}|\Big)\\
&+\frac{C}{N-1}\underset{j\neq i}{\sum}|x^j-x^{\prime,j}|+\frac{C}{N-1}\underset{j\neq i}{\sum} |\phi^j(t,\boldsymbol{x}) -\Psi^j(t,\boldsymbol{x}^\prime)|,
\end{aligned}
\end{equation*}
which implies \eqref{kinphixpsixprime}. 
\end{itemize}
\end{proof}

In the following result, we are interested in the equation satisfied by $\phi^{N,i}(t,\boldsymbol{x})$, $1\leq i\leq N$. It can be shown that $\phi^{N,i}(t,\boldsymbol{x})$, $1\leq i\leq N$, is an approximate solution to the $N$-coupled PDEs system \eqref{Psi}.  

\begin{theorem}
Suppose that Assumptions (A), (B'), (C) hold. Then there exists a constant $C>0$ such that $\phi^{N,i}(t,\boldsymbol{x})$, $1\leq i\leq N$, satisfies
\begin{equation}\label{phi}
\left\{
\begin{aligned}
&\partial_t\phi^{N,i}(t,\boldsymbol{x})+\underset{j=1}{\overset{N}{\sum}}\partial_{x^j}\phi^{N,i}(t,\boldsymbol{x})\Big[(A-B^2R^{-1}P_t)x^j-B^2R^{-1}\phi^{N,j}(t,\boldsymbol{x})\\
&-Bh\Big(k_j^N(t,\nu^{N,1}_{\boldsymbol{x}},\ldots,\nu^{N,N}_{\boldsymbol{x}},\lambda^{N,1}_{\boldsymbol{\phi}(t,\boldsymbol{x})},\ldots,\lambda^{N,N}_{\boldsymbol{\phi}(t,\boldsymbol{x})})\Big)\Big]+\frac{1}{2}\underset{j,\tau=1}{\overset{N}{\sum}}\partial_{x^j}\partial_{x^\tau}\phi^{N,i}(t,\boldsymbol{x})\sigma_0^2\\
&+\frac{1}{2}\underset{j=1}{\overset{N}{\sum}}\partial_{x^jx^j}\phi^{N,i}(t,\boldsymbol{x})\sigma^2+(A-B^2R^{-1}P_t)\phi^{N,i}(t,\boldsymbol{x})+Ql(\nu^{N,i}_{\boldsymbol{x}})\\
&-P_tBh\Big(k_i^N(t,\nu^{N,1}_{\boldsymbol{x}},\ldots,\nu^{N,N}_{\boldsymbol{x}},\lambda^{N,1}_{\boldsymbol{\phi}(t,\boldsymbol{x})},\ldots,\lambda^{N,N}_{\boldsymbol{\phi}(t,\boldsymbol{x})})\Big)=r^{N,i}(t,\boldsymbol{x}),\\
&\phi^{N,i}(T,\boldsymbol{x})=Gg(\nu^{N,i}_{\boldsymbol{x}}),
\end{aligned}
\right.
\end{equation}
where 
\begin{equation}\label{rni}
|r^{N,i}(t,\boldsymbol{x})|\leq\frac{C}{N}\Big(1+\frac{1}{N}\underset{j=1}{\overset{N}\sum}|x^i-x^j|\Big).
\end{equation}
\end{theorem}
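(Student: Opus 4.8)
The plan is to compute directly the PDE that $\phi^{N,i}(t,\boldsymbol x)=\Phi(t,\nu^{N,i}_{\boldsymbol x})$ satisfies by substituting the derivative formulas from Proposition \ref{upartial} into the left-hand side of \eqref{phi}, and then to compare the resulting expression with the PDE \eqref{Phi} that $\Phi$ actually solves (evaluated at $\nu=\nu^{N,i}_{\boldsymbol x}$). Every term that does not exactly match a term of \eqref{Phi} gets absorbed into the remainder $r^{N,i}(t,\boldsymbol x)$, and the content of the theorem is the quantitative estimate \eqref{rni} on that remainder.

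First I would record, using Proposition \ref{upartial}, that $\partial_t\phi^{N,i}(t,\boldsymbol x)=\partial_t\Phi(t,\nu^{N,i}_{\boldsymbol x})$, that $\partial_{x^j}\phi^{N,i}=\frac{1}{N-1}\partial_\nu\Phi(t,\nu^{N,i}_{\boldsymbol x})$ for $j\neq i$ and $=0$ for $j=i$, and that the second-order terms satisfy $\sum_{j,\tau}\partial_{x^j}\partial_{x^\tau}\phi^{N,i}=\partial_{\nu\nu}\Phi(t,\nu^{N,i}_{\boldsymbol x})$ while $\sum_j\partial_{x^jx^j}\phi^{N,i}=\frac{1}{N-1}\partial_{\nu\nu}\Phi(t,\nu^{N,i}_{\boldsymbol x})$. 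Hence the $\sigma_0^2$ second-order term reproduces exactly $\tfrac12\partial_{\nu\nu}\Phi\,\sigma_0^2$ as in \eqref{Phi}, whereas the $\sigma^2$ term is $\tfrac{\sigma^2}{2(N-1)}\partial_{\nu\nu}\Phi$, which is $O(1/N)$ and goes into $r^{N,i}$. For the first-order (drift) term, summing over $j\neq i$ the factor $\frac{1}{N-1}\partial_\nu\Phi(t,\nu^{N,i}_{\boldsymbol x})$ against the bracket $\big[(A-B^2R^{-1}P_t)x^j-B^2R^{-1}\phi^{N,j}(t,\boldsymbol x)-Bh(\cdots)\big]$, I would use $\frac{1}{N-1}\sum_{j\neq i}x^j=\nu^{N,i}_{\boldsymbol x}$ and note that $\frac{1}{N-1}\sum_{j\neq i}\phi^{N,j}(t,\boldsymbol x)=\frac{1}{N-1}\sum_{j\neq i}\Phi(t,\nu^{N,j}_{\boldsymbol x})$ differs from $\Phi(t,\nu^{N,i}_{\boldsymbol x})$ by $O\!\big(\tfrac1N(1+\tfrac1N\sum_{j\neq i}|x^i-x^j|)\big)$ since $|\nu^{N,j}_{\boldsymbol x}-\nu^{N,i}_{\boldsymbol x}|\le \tfrac{1}{N-1}|x^i-x^j|$ and $\partial_\nu\Phi$ is bounded (Theorem \ref{thm:Phi}). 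The coupling term $h(k_j^N(\cdots))$ is handled by estimate \eqref{kiphik}, which bounds $|k_j^N(t,\dots,\lambda^{N,\cdot}_{\boldsymbol\phi(t,\boldsymbol x)},\dots)-k(t,\nu^{N,j}_{\boldsymbol x},\Phi(t,\nu^{N,j}_{\boldsymbol x}))|$, combined again with Lipschitz continuity of $h$ and the averaging estimate on $\nu^{N,j}_{\boldsymbol x}$ versus $\nu^{N,i}_{\boldsymbol x}$; since $\partial_\nu\Phi$ is bounded, multiplying these $O(1/N)$ errors by $\frac{1}{N-1}\sum_{j\neq i}$ keeps them $O\!\big(\tfrac1N(1+\tfrac1N\sum_{j\neq i}|x^i-x^j|)\big)$. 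Finally the zeroth-order terms $(A-B^2R^{-1}P_t)\phi^{N,i}+Ql(\nu^{N,i}_{\boldsymbol x})-P_tBh(k_i^N(\cdots))$ agree with \eqref{Phi} up to replacing $k(t,\nu^{N,i}_{\boldsymbol x},\Phi(t,\nu^{N,i}_{\boldsymbol x}))$ by $k_i^N(t,\dots,\lambda^{N,\cdot}_{\boldsymbol\phi(t,\boldsymbol x)},\dots)$, again an $O\!\big(\tfrac1N(1+\tfrac1N\sum_{j\neq i}|x^i-x^j|)\big)$ error by \eqref{kiphik}. The terminal condition is immediate: $\phi^{N,i}(T,\boldsymbol x)=\Phi(T,\nu^{N,i}_{\boldsymbol x})=Gg(\nu^{N,i}_{\boldsymbol x})$.

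Collecting every mismatch into $r^{N,i}(t,\boldsymbol x)$ and using the boundedness of $P$, of $\partial_\nu\Phi$, $\partial_{\nu\nu}\Phi$, and of $h,b,l,g$ and their derivatives (Assumptions (A), (C)), together with Lemma \ref{lem:implicit} which guarantees the Lipschitz constants of $\rho_i^N$ (hence of $k_i^N$) are uniform in $N$, yields \eqref{rni} with a constant $C$ independent of $N$, $t$, and $\boldsymbol x$. (Note $\sum_{j=1}^N|x^i-x^j|$ versus $\sum_{j\neq i}|x^i-x^j|$ agree since the $j=i$ term vanishes, so the stated form of \eqref{rni} is consistent.)

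The main obstacle is bookkeeping rather than conceptual: one must carefully track that each of the three sources of error — the $\tfrac{1}{N}$ from replacing $\frac{1}{N-1}\sum_{j\neq i}$-averages of $\Phi$-values (and of the $k$-terms) by the single value at $\nu^{N,i}_{\boldsymbol x}$, the analogous $\tfrac{1}{N}$ in the drift, and the $\tfrac{1}{N}$ in the diagonal $\sigma^2$ second-order term — is genuinely $O\!\big(\tfrac1N(1+\tfrac1N\sum_{j\neq i}|x^i-x^j|)\big)$ and not merely $O(1)$; the only place where the linear-in-$|x^i-x^j|$ growth enters is through $|\nu^{N,j}_{\boldsymbol x}-\nu^{N,i}_{\boldsymbol x}|\le\frac{1}{N-1}|x^i-x^j|$ and through \eqref{kiphik}, and one must make sure no term produces worse-than-linear growth in the $x^j$'s (which it does not, because all of $\Phi$, $\partial_\nu\Phi$, $h$, $l$, $k_i^N$ enter either bounded or with bounded first derivatives acting on the already-controlled differences).
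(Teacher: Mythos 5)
Your proposal is correct and follows essentially the same route as the paper: substitute the derivative formulas of Proposition \ref{upartial} into the left-hand side of \eqref{phi}, compare with the PDE \eqref{Phi} evaluated at $\nu^{N,i}_{\boldsymbol{x}}$, and control every mismatch via the boundedness of $\partial_\nu\Phi,\partial_{\nu\nu}\Phi$, the bound $|\nu^{N,j}_{\boldsymbol{x}}-\nu^{N,i}_{\boldsymbol{x}}|\le\frac{1}{N-1}|x^i-x^j|$, and estimate \eqref{kiphik}. The only detail you leave implicit is the double-sum inequality $\frac{1}{(N-1)^2}\sum_{j\neq i}\sum_{\tau\neq j}|x^j-x^\tau|\le\frac{2N-3}{(N-1)^2}\sum_{j\neq i}|x^i-x^j|$ needed when averaging the \eqref{kiphik} errors over $j\neq i$, which the paper records explicitly as \eqref{suminequality}; this is exactly the bookkeeping point you flag, and it closes as you expect.
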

\begin{proof}
We evaluate the PDE \eqref{Phi} at $(t,\nu^{N,i}_{\boldsymbol{x}})\in [0,T]\times\mathbb R$:
\begin{equation}\label{wvalue}
 \begin{aligned}
&\partial_t\Phi(t,\nu^{N,i}_{\boldsymbol{x}})+\partial_{\nu}\Phi(t,\nu^{N,i}_{\boldsymbol{x}})\Big[(A-B^2R^{-1}P_t)\nu^{N,i}_{\boldsymbol{x}}-B^2R^{-1}\Phi(t,\nu^{N,i}_{\boldsymbol{x}})-Bh\Big(k\big(t,\nu^{N,i}_{\boldsymbol{x}},\Phi(t,\nu^{N,i}_{\boldsymbol{x}})\big)\Big)\Big]\\
 &
 +\frac{1}{2}\partial_{\nu\nu}\Phi(t,\nu^{N,i}_{\boldsymbol{x}})\sigma_0^2+(A-B^2R^{-1}P_t)\Phi(t,\nu^{N,i}_{\boldsymbol{x}})+Ql(\nu^{N,i}_{\boldsymbol{x}})-P_tBh\Big(k\big(t,\nu^{N,i}_{\boldsymbol{x}},\Phi(t,\nu^{N,i}_{\boldsymbol{x}})\big)\Big)=0,
 \end{aligned}
 \end{equation}
where $\Phi(t,\nu^{N,i}_{\boldsymbol{x}})$ is the solution of \eqref{Phi} with respect to $\nu^{N,i}_{\boldsymbol{x}}$.

We first observe that by Proposition \ref{upartial} 
\begin{equation*}
\begin{aligned}
&\frac{1}{2}\underset{j=1}{\overset{N}{\sum}}\partial_{x^jx^j}\phi^{N,i}(t,\boldsymbol{x})\sigma^2+\frac{1}{2}\underset{j,\tau=1}{\overset{N}{\sum}}\partial_{x^jx^\tau}\phi^{N,i}(t,\boldsymbol{x})\sigma_0^2\\
=&\frac{1}{2}\partial_{x^ix^i}\phi^{N,i}(t,\boldsymbol{x})\sigma^2+\frac{1}{2}\underset{j\neq i}{\sum}\partial_{x^jx^j}\phi^{N,i}(t,\boldsymbol{x})\sigma^2+\frac{1}{2}\partial_{x^ix^i}\phi^{N,i}(t,\boldsymbol{x})\sigma_0^2\\
&+\frac{1}{2}\underset{j\neq i}{\sum}\partial_{x^jx^j}\phi^{N,i}(t,\boldsymbol{x})\sigma_0^2+\underset{j\neq i}{\sum}\partial_{x^ix^j}\phi^{N,i}(t,\boldsymbol{x})\sigma_0^2+\frac{1}{2}\underset{j\neq \tau\neq i}{\sum}\partial_{x^j}\partial_{x^\tau}\phi^{N,i}(t,\boldsymbol{x})\sigma_0^2\\
=&\frac{1}{2}\partial_{x^ix^i}\phi^{N,i}(t,\boldsymbol{x})(\sigma^2+\sigma_0^2)+\frac{1}{2}\underset{j\neq i}{\sum}\partial_{x^jx^j}\phi^{N,i}(t,\boldsymbol{x})(\sigma^2+\sigma_0^2)\\
&+\underset{j\neq i}{\sum}\partial_{x^i}\partial_{x^j}\phi^{N,i}(t,\boldsymbol{x})\sigma_0^2+\frac{1}{2}\underset{j\neq \tau\neq i}{\sum}\partial_{x^j}\partial_{x^\tau}\phi^{N,i}(t,\boldsymbol{x})\sigma_0^2\\
=&\frac{1}{2}\frac{1}{N-1}\partial_{\nu\nu}\Phi(t,\nu^{N,i}_{\boldsymbol{x}})(\sigma^2+\sigma_0^2)+\frac{1}{2}\frac{1}{(N-1)^2}\underset{j\neq \tau\neq i}{\sum}\partial_{\nu\nu}\Phi(t,\nu^{N,i}_{\boldsymbol{x}})\sigma_0^2.
\end{aligned}
\end{equation*}

Using Theorem \ref{thm:Phi}, we obtain the boundedness of $\partial_{\nu}\Phi$ and $\partial_{\nu\nu}\Phi$. Thus,
\[ |\frac{1}{2}\frac{1}{N-1}\partial_{\nu\nu}\Phi(t,\nu^{N,i}_{\boldsymbol{x}})(\sigma^2+\sigma_0^2) |\leq \frac{C}{N-1} \]
and
\begin{equation*}
\begin{aligned}
&\frac{1}{2}\frac{1}{(N-1)^2}\underset{j\neq \tau\neq i}{\sum}\partial_{\nu\nu}\Phi(t,\nu^{N,i}_{\boldsymbol{x}})\sigma_0^2\\
=&\frac{1}{2}(N-1)(N-2)\frac{1}{(N-1)^2}\partial_{\nu\nu}\Phi(t,\nu^{N,i}_{\boldsymbol{x}})\sigma_0^2=\frac{1}{2}\partial_{\nu\nu}\Phi(t,\nu^{N,i}_{\boldsymbol{x}})\sigma_0^2+O(\frac{1}{N}). 
\end{aligned}
\end{equation*}

Based on these estimates, one can obtain 
\begin{equation}
\label{final1}
\frac{1}{2}\underset{j=1}{\overset{N}{\sum}}\partial_{x^jx^j}\phi^{N,i}(t,\boldsymbol{x})\sigma^2+\frac{1}{2}\underset{j,\tau=1}{\overset{N}{\sum}}\partial_{x^j}\partial_{x^\tau}\phi^{N,i}(t,\boldsymbol{x})\sigma_0^2
=\frac{1}{2}\partial_{\nu\nu}\Phi(t,\nu^{N,i}_{\boldsymbol{x}})\sigma_0^2+O(\frac{1}{N}).
\end{equation}

We then calculate
\begin{equation}
\begin{aligned}\label{eq:2ndnashsystem}
&\underset{j=1}{\overset{N}{\sum}}\partial_{x^j}\phi^{N,i}(t,\boldsymbol{x})\Big[(A-B^2R^{-1}P_t)x^j-B^2R^{-1}\phi^{N,j}(t,\boldsymbol{x})-Bh\Big(k_j^N(t,\nu^{N,1}_{\boldsymbol{x}},\ldots,\nu^{N,N}_{\boldsymbol{x}},\lambda^{N,1}_{\boldsymbol{\phi}(t,\boldsymbol{x})},\ldots,\lambda^{N,N}_{\boldsymbol{\phi}(t,\boldsymbol{x})})\Big)\Big]\\
=&\underset{j\neq i}{\sum}\partial_{x^j}\phi^{N,i}(t,\boldsymbol{x})\Big[(A-B^2R^{-1}P_t)x^j-B^2R^{-1}\phi^{N,j}(t,\boldsymbol{x})-Bh\Big(k_j^N(t,\nu^{N,1}_{\boldsymbol{x}},\ldots,\nu^{N,N}_{\boldsymbol{x}},\lambda^{N,1}_{\boldsymbol{\phi}(t,\boldsymbol{x})},\ldots,\lambda^{N,N}_{\boldsymbol{\phi}(t,\boldsymbol{x})})\Big)\Big]\\
=&\partial_{\nu}\Phi(t,\nu^{N,i}_{\boldsymbol{x}})\Big[(A-B^2R^{-1}P_t)\nu^{N,i}_{\boldsymbol{x}}-B^2R^{-1}\frac{1}{N-1}\underset{j\neq i}{\sum}\phi^{N,j}(t,\boldsymbol{x})\\
&-B\frac{1}{N-1}\underset{j\neq i}{\sum}h\Big(k_j^N(t,\nu^{N,1}_{\boldsymbol{x}},\ldots,\nu^{N,N}_{\boldsymbol{x}},\lambda^{N,1}_{\boldsymbol{\phi}(t,\boldsymbol{x})},\ldots,\lambda^{N,N}_{\boldsymbol{\phi}(t,\boldsymbol{x})})\Big)\Big].
\end{aligned}
\end{equation}

By the boundedness of $\partial_{\nu}\Phi$, we have 
\begin{equation*}
\begin{aligned}
&|\frac{1}{N-1}\underset{j\neq i}{\sum}\phi^{N,j}(t,\boldsymbol{x})-\Phi(t,\nu^{N,i}_{\boldsymbol{x}})|\leq|\frac{1}{N-1}\underset{j\neq i}{\sum}\Phi(t,\nu^{N,j}_{\boldsymbol{x}})-\Phi(t,\nu^{N,i}_{\boldsymbol{x}})|\\
&\leq \frac{C}{N-1}\underset{j\neq i}{\sum}|\nu^{N,j}_{\boldsymbol{x}}-\nu^{N,i}_{\boldsymbol{x}}|=\frac{C}{N-1}\Big(\frac{1}{N-1}\underset{j\neq i}{\sum}|x^i-x^j|\Big)=O\Big(\frac{1}{N^2}\underset{j\not= i}{\sum}|x^i-x^j|\Big).
\end{aligned}
\end{equation*}

Moreover, based on \eqref{kiphik} and by the uniform Lipschitz continuity of $k$ and $\Phi$, we further have 
\begin{equation*}
\begin{aligned}
&|\frac{1}{N-1}\underset{j\neq i}{\sum}k_j^N(t,\nu^{N,1}_{\boldsymbol{x}},\ldots,\nu^{N,N}_{\boldsymbol{x}},\lambda^{N,1}_{\boldsymbol{\phi}(t,\boldsymbol{x})},\ldots,\lambda^{N,N}_{\boldsymbol{\phi}(t,\boldsymbol{x})})-k\big(t,\nu^{N,i}_{\boldsymbol{x}},\Phi(t,\nu^{N,i}_{\boldsymbol{x}})\big)|\\
\leq &\frac{1}{N-1}\underset{j\neq i}{\sum}|k_j^N(t,\nu^{N,1}_{\boldsymbol{x}},\ldots,\nu^{N,N}_{\boldsymbol{x}},\lambda^{N,1}_{\boldsymbol{\phi}(t,\boldsymbol{x})},\ldots,\lambda^{N,N}_{\boldsymbol{\phi}(t,\boldsymbol{x})})-k\big(t,\nu^{N,j}_{\boldsymbol{x}},\Phi(t,\nu^{N,j}_{\boldsymbol{x}})\big)|\\
&+\frac{1}{N-1}\underset{j\neq i}{\sum}|k\big(t,\nu^{N,j}_{\boldsymbol{x}},\Phi(t,\nu^{N,j}_{\boldsymbol{x}})\big)-k\big(t,\nu^{N,i}_{\boldsymbol{x}},\Phi(t,\nu^{N,i}_{\boldsymbol{x}})\big)|\\
\leq &\frac{C}{N-1}\underset{j\neq i}{\sum}\frac{1}{N-1}\Big(1+\frac{1}{N-1}\underset{\tau\neq j}{\sum}|x^j-x^\tau|\Big)+\frac{C}{N-1}\underset{j\neq i}{\sum}|\nu^{N,j}_{\boldsymbol{x}}-\nu^{N,i}_{\boldsymbol{x}}|\\
\leq &\frac{C}{N-1}\Big(1+\frac{1}{N-1}\underset{j\neq i}{\sum}|x^i-x^j|+\frac{1}{(N-1)^2}\underset{j\neq i}{\sum}\underset{\tau\neq j}{\sum}|x^j-x^\tau|\Big).
\end{aligned}
\end{equation*}

In fact, we can show that
\begin{equation}\label{suminequality}
\frac{1}{(N-1)^2}\underset{j\neq i}{\sum}\underset{\tau\neq j}{\sum}|x^j-x^\tau|\leq \frac{2N-3}{(N-1)^2}\underset{j\neq i}{\sum}|x^i-x^j|.
\end{equation}

Thus, we have 
\begin{equation*}
|\frac{1}{N-1}\underset{j\neq i}{\sum}k_j^N(t,\nu^{N,1}_{\boldsymbol{x}},\ldots,\nu^{N,N}_{\boldsymbol{x}},\lambda^{N,1}_{\boldsymbol{\phi}(t,\boldsymbol{x})},\ldots,\lambda^{N,N}_{\boldsymbol{\phi}(t,\boldsymbol{x})})-k\big(t,\nu^{N,i}_{\boldsymbol{x}},\Phi(t,\nu^{N,i}_{\boldsymbol{x}})\big)|=O\Big( \frac{1}{N}\Big(1+\frac{1}{N}\underset{j\not=i}{\sum}|x^i-x^j|\Big)     \Big).
\end{equation*}

Combining all the above estimates,  we derive from \eqref{eq:2ndnashsystem}
\begin{equation}\label{eq:2ndterm}
\begin{aligned}
&\underset{j=1}{\overset{N}{\sum}}\partial_{x^j}\phi^{N,i}(t,\boldsymbol{x})\Big[(A-B^2R^{-1}P_t)x^j-B^2R^{-1}\phi^{N,j}(t,\boldsymbol{x})-Bh\Big(k_j^N(t,\nu^{N,1}_{\boldsymbol{x}},\ldots,\nu^{N,N}_{\boldsymbol{x}},\lambda^{N,1}_{\boldsymbol{\phi}(t,\boldsymbol{x})},\ldots,\lambda^{N,N}_{\boldsymbol{\phi}(t,\boldsymbol{x})})\Big)\Big]\\
=&\partial_{\nu}\Phi(t,\nu^{N,i}_{\boldsymbol{x}})\Big[(A-B^2R^{-1}P_t)\nu^{N,i}_{\boldsymbol{x}}-B^2R^{-1}\Phi(t,\nu^{N,i}_{\boldsymbol{x}})-Bh\Big(k\big(t,\nu^{N,i}_{\boldsymbol{x}},\Phi(t,\nu^{N,i}_{\boldsymbol{x}})\Big)\Big]\\
&+O\Big( \frac{1}{N}\Big(1+\frac{1}{N}\underset{j\not=i}{\sum}|x^i-x^j|\Big)     \Big).
\end{aligned}
\end{equation} 

At the end, we use \eqref{kiphik} to estimate
\begin{equation}\label{eq:3rdterm}
\begin{aligned}
&P_tBh\Big(k_i^N(t,\nu^{N,1}_{\boldsymbol{x}},\ldots,\nu^{N,N}_{\boldsymbol{x}},\lambda^{N,1}_{\boldsymbol{\phi}(t,\boldsymbol{x})},\ldots,\lambda^{N,N}_{\boldsymbol{\phi}(t,\boldsymbol{x})})\Big)\\
=&P_tBh\Big(k\big(t,\nu^{N,i}_{\boldsymbol{x}},\Phi(t,\nu^{N,i}_{\boldsymbol{x}})\big)\Big)+O\Big( \frac{1}{N}\Big(1+\frac{1}{N}\underset{j\not = i}{\sum}|x^i-x^j|\Big)     \Big).
\end{aligned}
\end{equation}

Combing \eqref{final1}, \eqref{eq:2ndterm} and \eqref{eq:3rdterm}, the desired equation \eqref{phi} is proved.
\end{proof}

We consider the following two SDEs on the interval $[t_0,T]$:
\begin{equation}
\left\{
\begin{aligned}\label{convergence:xi}
dx^i_t=&~\Big[(A-B^2R^{-1}P_t)x^i_t-B^2R^{-1}\phi^{N,i}(t,\boldsymbol{x}_t)-Bh\Big(k_i^N(t,\nu^{N,1}_{\boldsymbol{x}_t},\ldots,\nu^{N,N}_{\boldsymbol{x}_t},\lambda^{N,1}_{\boldsymbol{\phi}(t,\boldsymbol{x}_t)},\ldots,\lambda^{N,N}_{\boldsymbol{\phi}(t,\boldsymbol{x}_t)})\Big)\Big]dt\\
&+\sigma dW_t^i+\sigma_0dW_t^0,\\
x^i_{t_0}=&~\xi^i,
\end{aligned}
\right.
\end{equation}
and
\begin{equation}
\left\{
\begin{aligned}\label{convergence:x*i}
dx^{*,i}_t=&~\Big[(A-B^2R^{-1}P_t)x^{*,i}_t-B^2R^{-1}\Psi^{N,i}(t,\boldsymbol{x}_t^*)-Bh\Big(k_i^N(t,\nu^{N,1}_{\boldsymbol{x}_t^*},\ldots,\nu^{N,N}_{\boldsymbol{x}_t^*},\lambda^{N,1}_{\boldsymbol{\Psi}(t,\boldsymbol{x}_t^*)},\ldots,\lambda^{N,N}_{\boldsymbol{\Psi}(t,\boldsymbol{x}_t^*)})\Big)\Big]dt\\
&+\sigma dW_t^i+\sigma_0dW_t^0,\\
x^{*,i}_{t_0}=&~\xi^i.
\end{aligned}
\right.
\end{equation}

\begin{lemma}
Suppose that Assumptions (A), (B') (C) hold and $\sigma>0$. For $1\leq j \leq N$, $j\neq i$, it has 
\begin{equation}\label{xij}
\mathbb{E}[\underset{t_0\leq s\leq t}{\sup}|x_s^i-x_s^j|^2]\leq C\left(1+\mathbb{E}[|\xi^i-\xi^j|^2]\right),
\end{equation}
and
\begin{equation}\label{x*ij}
\mathbb{E}[\underset{t_0\leq s\leq t}{\sup}|x_s^{*,i}-x_s^{*,j}|^2]\leq C\left(1+\mathbb{E}[|\xi^i-\xi^j|^2]\right).
\end{equation}
\end{lemma}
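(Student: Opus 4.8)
The plan is to derive the estimates \eqref{xij} and \eqref{x*ij} by standard Gr\"onwall-type arguments applied to the difference processes, exploiting the boundedness of the coefficients $h$, $\partial_\nu\Phi$, $\partial_\nu\Psi^{N,i}$ together with the structure of the interaction terms. I will prove \eqref{x*ij} in detail; \eqref{xij} is entirely analogous (indeed slightly easier, since by Proposition \ref{upartial} the Lipschitz constants of $\phi^{N,i}$ in each coordinate are of order $1/(N-1)$). First I would subtract the two copies of the SDE \eqref{convergence:x*i} for indices $i$ and $j$, using that the idiosyncratic noises $W^i, W^j$ enter linearly with constant coefficient $\sigma$ and the common noise cancels. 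This yields
\[
d(x_t^{*,i}-x_t^{*,j}) = (A-B^2R^{-1}P_t)(x_t^{*,i}-x_t^{*,j})\,dt + (D_t^i - D_t^j)\,dt + \sigma\,dW_t^i - \sigma\,dW_t^j,
\]
where $D_t^i$ denotes the two coupling terms $-B^2R^{-1}\Psi^{N,i}(t,\boldsymbol{x}_t^*) - Bh(k_i^N(\cdots))$. Since $P$ is bounded (Theorem \ref{closeloop}) and $h$ is bounded (Assumption (A)), the only term whose difference is not immediately controlled by $|x_t^{*,i}-x_t^{*,j}|$ or by a constant is $\Psi^{N,i}(t,\boldsymbol{x}_t^*) - \Psi^{N,j}(t,\boldsymbol{x}_t^*)$. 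Here I would use that $\Psi^{N,i}$ has bounded first-order derivatives (Theorem \ref{thm:Psi}), but crucially the bound need not be uniform in $N$; instead I observe that the $N$-coupled system \eqref{Psi} is symmetric under permutations that swap players $i$ and $j$ (and correspondingly permute the entries of $\boldsymbol{x}$), so $\Psi^{N,j}(t,\boldsymbol{x}) = \Psi^{N,i}(t,\sigma_{ij}\boldsymbol{x})$ where $\sigma_{ij}$ transposes coordinates $i$ and $j$. Applying the mean value theorem to $\Psi^{N,i}$ along the segment joining $\boldsymbol{x}_t^*$ and $\sigma_{ij}\boldsymbol{x}_t^*$ — which differ only in coordinates $i$ and $j$ — one gets $|\Psi^{N,i}(t,\boldsymbol{x}_t^*) - \Psi^{N,j}(t,\boldsymbol{x}_t^*)| \le C_N |x_t^{*,i}-x_t^{*,j}|$. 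This linear-in-difference bound, even with an $N$-dependent constant, is all that is needed for a fixed $N$.

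Next I would take $\sup_{t_0\le s\le t}$, square, take expectations, and apply the Burkholder--Davis--Gundy inequality to the martingale part $\int_{t_0}^s (\sigma\,dW_r^i - \sigma\,dW_r^j)$, which contributes a bounded term $C(t-t_0) \le CT$. The bounded terms coming from $h$ contribute another constant $CT$. Collecting everything,
\[
\mathbb{E}\Big[\sup_{t_0\le s\le t}|x_s^{*,i}-x_s^{*,j}|^2\Big] \le C\,\mathbb{E}[|\xi^i-\xi^j|^2] + C + C\int_{t_0}^t \mathbb{E}\Big[\sup_{t_0\le r\le s}|x_r^{*,i}-x_r^{*,j}|^2\Big]\,ds,
\]
and Gr\"onwall's inequality on $[t_0,T]$ delivers \eqref{x*ij}. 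For \eqref{xij} the identical argument applies, now using Proposition \ref{upartial} for $\phi^{N,i}$ and the permutation symmetry of \eqref{phi} (or just the explicit formula $\phi^{N,i}(t,\boldsymbol{x}) = \Phi(t,\nu_{\boldsymbol{x}}^{N,i})$, whose difference is directly Lipschitz: $|\phi^{N,i}(t,\boldsymbol{x})-\phi^{N,j}(t,\boldsymbol{x})| \le C|\nu_{\boldsymbol{x}}^{N,i}-\nu_{\boldsymbol{x}}^{N,j}| = \frac{C}{N-1}|x^i-x^j|$), plus the same boundedness of $h$ and the Lipschitz bound on $k_i^N$ with $N$-independent constant from Lemma \ref{lem:implicit}.

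The main obstacle — and the reason this lemma is stated separately rather than folded into a later estimate — is precisely that one does \emph{not} have $N$-uniform gradient bounds on $\Psi^{N,i}$ (as flagged in the Remark after Theorem \ref{thm:vecnashsystem}). So one must resist the temptation to bound $|\Psi^{N,i}(t,\boldsymbol{x}) - \Psi^{N,j}(t,\boldsymbol{x})|$ by a full gradient estimate over all coordinates, which would produce a sum $\sum_{k}|x^k|$-type term that cannot be closed; instead the permutation-symmetry observation confines the discrepancy to coordinates $i$ and $j$ only, so that the right-hand side of the Gr\"onwall inequality involves only $|x^{*,i}-x^{*,j}|$ and constants. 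Everything else is routine SDE estimation, so I will not grind through the BDG constants or the exact form of the Gr\"onwall exponent.
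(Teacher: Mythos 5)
Your Gr\"onwall skeleton is the right one, and your treatment of \eqref{xij} (via the explicit formula $\phi^{N,i}(t,\boldsymbol{x})=\Phi(t,\nu^{N,i}_{\boldsymbol{x}})$, whose pairwise difference is $O(\frac{1}{N-1}|x^i-x^j|)$ with an $N$-free constant) is fine. The problem is in \eqref{x*ij}: the permutation-symmetry/mean-value-theorem detour bounds $|\Psi^{N,i}(t,\boldsymbol{x}^*_t)-\Psi^{N,j}(t,\boldsymbol{x}^*_t)|$ by $C_N|x^{*,i}_t-x^{*,j}_t|$ with a constant that you concede may depend on $N$ (indeed, the Remark after Theorem \ref{thm:vecnashsystem} says no $N$-uniform gradient bound is available). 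That $C_N$ then sits in the Gr\"onwall coefficient and produces a final constant of order $e^{C_NT}$. This is not "all that is needed for a fixed $N$": the constant $C$ in \eqref{xij}--\eqref{x*ij} must be independent of $N$, because the lemma is used precisely to convert the $O(1/N^2)$ prefactors in \eqref{estimate:rni}, \eqref{kiphipsi} and Theorem \ref{uvestimate} into genuine convergence rates; an $N$-dependent constant would destroy those estimates.

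The paper's route sidesteps this entirely and is much simpler: $\Psi^{N,i}$ is uniformly bounded \emph{in} $N$ (its probabilistic representation $\Psi^{N,i}(t_0,\boldsymbol{x})=\varphi^{*,i}_{t_0}$ comes from the BSDE in \eqref{NNCE}, whose terminal value $Gg(\nu^{N,i}_{\boldsymbol{x}^*_T})$ and driver are bounded by constants involving only $\|g\|_\infty$, $\|l\|_\infty$, $\|h\|_\infty$ and $P$), and $h$ is bounded. Hence $|\Psi^{N,i}-\Psi^{N,j}|\le C$ and $|h(k_i^N(\cdots))-h(k_j^N(\cdots))|\le 2\|h\|_\infty$ are bounded by $N$-free constants and are simply absorbed into the additive "$1+$" on the right-hand side of \eqref{x*ij}; the Gr\"onwall inequality then has $N$-free coefficients and yields the claim. (The same remark applies to your $D^i_t-D^j_t$ term in \eqref{xij}, where boundedness of $\phi^{N,i}$ and $h$ already suffices without invoking any Lipschitz structure.) To repair your argument, replace the mean-value step by this uniform sup-norm bound; no symmetry argument is needed.
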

\begin{proof}
From \eqref{convergence:xi}, we have for $1\leq j \leq N$, $j\neq i$,
\begin{equation*}
\begin{aligned}
&|x_t^i-x_t^j|\leq  |\xi^i-\xi^j|+C\int_{t_0}^t|x_s^i-x_s^j|ds+C\int_{t_0}^t|\phi^{N,i}(s,\boldsymbol{x}_s)-\phi^{N,j}(s,\boldsymbol{x}_s)|ds\\
&~+C\int_{t_0}^t|h\Big(k_i^N(s,\nu^{N,1}_{\boldsymbol{x}_s},\ldots,\nu^{N,N}_{\boldsymbol{x}_s},\lambda^{N,1}_{\boldsymbol{\phi}(s,\boldsymbol{x}_s)},\ldots,\lambda^{N,N}_{\boldsymbol{\phi}(s,\boldsymbol{x}_s)})\Big)-h\Big(k_j^N(s,\nu^{N,1}_{\boldsymbol{x}_s},\ldots,\nu^{N,N}_{\boldsymbol{x}_s},\lambda^{N,1}_{\boldsymbol{\phi}(s,\boldsymbol{x}_s)},\ldots,\lambda^{N,N}_{\boldsymbol{\phi}(s,\boldsymbol{x}_s)})\Big)|ds.
\end{aligned}
\end{equation*}

By the boundedness of $\phi^{N,i},1\leq i\leq N$, and $h$, then using the Gronwall's inequality, we derive \eqref{xij}. Similarly, we can prove \eqref{x*ij}. 
\end{proof}

\begin{theorem}\label{uvestimate}
Suppose that Assumptions (A), (B'), (C) hold and $\sigma>0$. Let $\boldsymbol{x}:=(x^{i})_{1\leq i\leq N}$ and $\boldsymbol{x}^{*}:=(x^{*,i})_{1\leq i\leq N}$ be solutions to \eqref{convergence:xi} and \eqref{convergence:x*i}, respectively. Then there exists a constant $C>0$, independent of $N$, satisfying 
\begin{equation}\label{est:xix*i}
\mathbb{E}\Big[\underset{t_0\leq t\leq T}{\sup}|x_t^i-x_t^{*,i}|^2\Big]\leq\frac{C}{N^2}\Big(1+\frac{1}{N}\underset{j=1}{\overset{N}\sum}\mathbb{E}[|\xi^i-\xi^j|^2]+\frac{1}{N^2}\underset{i,j=1}{\overset{N}{\sum}}\mathbb{E}[|\xi^{i}-\xi^{j}|^2]\Big),\end{equation}
\begin{equation}\label{est:phipsi}
\mathbb{E}\Big[\underset{t_0\leq t\leq T}{\sup}|\phi^{N,i}(t,\boldsymbol{x}_t^*)-\Psi^{N,i}(t,\boldsymbol{x}_t^*)|^2\Big]\leq\frac{C}{N^2}\Big(1+\frac{1}{N}\underset{j=1}{\overset{N}\sum}\mathbb{E}[|\xi^i-\xi^j|^2]+\frac{1}{N^2}\underset{i,j=1}{\overset{N}{\sum}}\mathbb{E}[|\xi^{i}-\xi^{j}|^2]\Big).
\end{equation}
Moreover, we have 
\begin{equation}\label{est:initial} 
|\phi^{N,i}(t_0,\boldsymbol{\xi})-\Psi^{N,i}(t_0,\boldsymbol{\xi})|\leq\frac{C}{N}\Big(1+\frac{1}{N}\underset{j=1}{\overset{N}\sum}|\xi^{i}-\xi^{j}|^2+\frac{1}{N^2}\underset{i,j=1}{\overset{N}{\sum}}|\xi^{i}-\xi^{j}|^2\Big)^{\frac{1}{2}}.
\end{equation}
\end{theorem}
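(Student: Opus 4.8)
The plan is to reduce all three estimates to a single linear BSDE for the differences $\delta_t^i:=\phi^{N,i}(t,\boldsymbol{x}_t^*)-\Psi^{N,i}(t,\boldsymbol{x}_t^*)$ along the trajectory $\boldsymbol{x}^*=(x^{*,i})_{1\le i\le N}$ solving \eqref{convergence:x*i}. First I would apply It\^o's formula to $\phi^{N,i}(t,\boldsymbol{x}_t^*)$ and to $\Psi^{N,i}(t,\boldsymbol{x}_t^*)$ along \eqref{convergence:x*i}: since $\Psi^{N,i}$ solves \eqref{Psi} exactly (so the drift in \eqref{convergence:x*i} matches that in \eqref{Psi}) while $\phi^{N,i}$ solves the perturbed equation \eqref{phi} with remainder $r^{N,i}$ (so the drift used in \eqref{phi} differs from the actual one of $\boldsymbol{x}^*$), subtracting shows that $\delta^i$ solves a linear BSDE with \emph{zero} terminal condition (because $\phi^{N,i}(T,\cdot)=Gg(\nu^{N,i}_{\cdot})=\Psi^{N,i}(T,\cdot)$) whose generator is the sum of a term linear in $\delta_t^i$ with bounded coefficient $A-B^2R^{-1}P_t$ and a source consisting of three pieces: the remainder $r^{N,i}(t,\boldsymbol{x}_t^*)$, bounded by \eqref{rni}; the mismatch $h(k_i^N(\cdots,\boldsymbol{\phi}(t,\boldsymbol{x}_t^*)))-h(k_i^N(\cdots,\boldsymbol{\Psi}(t,\boldsymbol{x}_t^*)))$, bounded by \eqref{kiphikpsi}; and $\sum_{j\ne i}\partial_{x^j}\phi^{N,i}(t,\boldsymbol{x}_t^*)\big(B^2R^{-1}\delta_t^j-B[h(k_j^N(\cdots,\boldsymbol{\Psi}))-h(k_j^N(\cdots,\boldsymbol{\phi}))]\big)$, which by Proposition \ref{upartial} (so $\partial_{x^i}\phi^{N,i}=0$ and $\partial_{x^j}\phi^{N,i}=\tfrac1{N-1}\partial_\nu\Phi$ is bounded for $j\ne i$), together with \eqref{kiphikpsi} and \eqref{suminequality}, is of the same order. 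Altogether the source has the form $g_t^i+\tfrac{C}{N}\sum_j|\delta_t^j|$ with $|g_t^i|\le\tfrac{C}{N}\big(1+\tfrac1N\sum_j|x_t^{*,i}-x_t^{*,j}|\big)$, so that \eqref{x*ij} yields $\mathbb E[|g_t^i|^2]\le\tfrac{C}{N^2}\big(1+\tfrac1N\sum_j\mathbb E[|\xi^i-\xi^j|^2]\big)$ and $\tfrac1N\sum_i\mathbb E[|g_t^i|^2]\le\tfrac{C}{N^2}\big(1+\tfrac1{N^2}\sum_{i,j}\mathbb E[|\xi^i-\xi^j|^2]\big)$.

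The key structural point is that the inter-particle coupling $\tfrac{C}{N}\sum_j|\delta_t^j|$ is only of order $1/N$, and I would exploit this through a two-step Gronwall argument. Summing the It\^o energy identity for $|\delta_t^i|^2$ over $i$, this coupling contributes the term $\tfrac{C}{N}\big(\sum_i|\delta_s^i|\big)^2\le C\sum_i|\delta_s^i|^2$, which is an $N$-independent Gronwall term; hence Gronwall's inequality, together with Burkholder--Davis--Gundy and the standard a priori estimates for BSDEs and the vanishing terminal data, gives the ``averaged'' bound $\mathbb E\big[\sup_{t_0\le t\le T}\tfrac1N\sum_i|\delta_t^i|^2\big]\le\tfrac{C}{N^2}\big(1+\tfrac1{N^2}\sum_{i,j}\mathbb E[|\xi^i-\xi^j|^2]\big)$. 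Feeding this back into the individual BSDE for $\delta^i$, where the coupling is now a known source bounded by $C\big(\tfrac1N\sum_j|\delta_s^j|^2\big)^{1/2}$, and applying the standard $L^2$ estimate for this BSDE with zero terminal condition, gives \eqref{est:phipsi}.

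For \eqref{est:xix*i} I would subtract \eqref{convergence:xi} from \eqref{convergence:x*i}: the noise terms cancel, so $e_t^i:=x_t^i-x_t^{*,i}$ solves a random ODE with $e_{t_0}^i=0$ whose right-hand side involves $\phi^{N,i}(t,\boldsymbol{x}_t)-\Psi^{N,i}(t,\boldsymbol{x}_t^*)$ and a difference of the $h(k_i^N(\cdots))$ terms. Splitting $\phi^{N,i}(t,\boldsymbol{x}_t)-\Psi^{N,i}(t,\boldsymbol{x}_t^*)=\big(\phi^{N,i}(t,\boldsymbol{x}_t)-\phi^{N,i}(t,\boldsymbol{x}_t^*)\big)+\delta_t^i$ and using that $\phi^{N,i}(t,\cdot)=\Phi(t,\nu^{N,i}_{\cdot})$ is Lipschitz with per-coordinate constant $O(1/N)$ (Proposition \ref{upartial}), the first bracket is $O\big(\tfrac1N\sum_j|e_t^j|\big)$; and by \eqref{kinphixpsixprime} applied with $\boldsymbol{x}=\boldsymbol{x}_t$, $\boldsymbol{x}'=\boldsymbol{x}_t^*$, the $h(k_i^N(\cdots))$ difference is $O\big(\tfrac1N(1+\tfrac1N\sum_j|x_t^i-x_t^j|+\tfrac1N\sum_j|x_t^{*,i}-x_t^{*,j}|)\big)+O\big(\tfrac1N\sum_j|e_t^j|\big)+O\big(\tfrac1N\sum_j|\delta_t^j|\big)$. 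Running the same averaged-then-individual Gronwall argument, now invoking \eqref{xij}, \eqref{x*ij} for the $L^2$ norms of the pure sources and the already-proved \eqref{est:phipsi} for the $\delta$ contributions, yields \eqref{est:xix*i}. Finally, \eqref{est:initial} follows by carrying out the derivation of \eqref{est:phipsi} conditionally on $\mathcal F_{t_0}$ (note $\boldsymbol{x}_{t_0}^*=\boldsymbol{\xi}$, so the data $|\xi^i-\xi^j|$ are $\mathcal F_{t_0}$-measurable and \eqref{x*ij} holds conditionally) and then evaluating at $t=t_0$: since $\delta_{t_0}^i=\phi^{N,i}(t_0,\boldsymbol{\xi})-\Psi^{N,i}(t_0,\boldsymbol{\xi})$ is $\mathcal F_{t_0}$-measurable, $|\delta_{t_0}^i|^2=\mathbb E[|\delta_{t_0}^i|^2\mid\mathcal F_{t_0}]\le\mathbb E[\sup_{t_0\le t\le T}|\delta_t^i|^2\mid\mathcal F_{t_0}]\le\tfrac{C}{N^2}\big(1+\tfrac1N\sum_j|\xi^i-\xi^j|^2+\tfrac1{N^2}\sum_{i,j}|\xi^i-\xi^j|^2\big)$, and taking square roots.

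The main obstacle is the bookkeeping in this coupled Gronwall scheme: one must arrange the estimates so that, after summing over $i$, the weakly coupled $O(1/N)$ interaction terms contribute exactly the symmetric average $\tfrac1{N^2}\sum_{i,j}\mathbb E[|\xi^i-\xi^j|^2]$ while the per-particle sources contribute $\tfrac1N\sum_j\mathbb E[|\xi^i-\xi^j|^2]$. This requires repeatedly using Cauchy--Schwarz for averages together with the double-sum inequality \eqref{suminequality} to reduce nested sums $\sum_{j\ne i}\sum_{\tau\ne j}$ to $\sum_{j\ne i}$, and checking that the remainder $r^{N,i}$, the bounds \eqref{kiphik}--\eqref{kinphixpsixprime}, and the derivatives $\partial_{x^j}\phi^{N,i}$ are all $O(1/N)$, which is precisely what produces the overall $O(1/N^2)$ rate in $L^2$.
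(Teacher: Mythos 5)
Your proposal is correct and follows essentially the same route as the paper: Itô's formula applied to $\phi^{N,i}(t,\boldsymbol{x}^*_t)$ and $\Psi^{N,i}(t,\boldsymbol{x}^*_t)$ along \eqref{convergence:x*i}, with the source terms controlled by \eqref{rni}, \eqref{kiphikpsi}, \eqref{kinphixpsixprime} and the $O(1/N)$ bounds of Proposition \ref{upartial}, followed by the averaged-then-individual Gronwall argument, and finally the pathwise ODE for $x^i-x^{*,i}$. The only (cosmetic) difference is the order of the last step: the paper first proves the fixed-$t$ conditional bound, reads off \eqref{est:initial} at $t=t_0$, and then upgrades it to the pathwise estimate along the flow to obtain the supremum inside the expectation in \eqref{est:phipsi}, whereas you obtain the supremum bound directly from the linear-BSDE a priori estimate with Burkholder--Davis--Gundy and then specialize conditionally at $t=t_0$; both are valid.
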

\begin{proof} Without loss of generality, we can assume that $t_0=0$.
Applying the It\^o's formula to $\phi^{N,i}(t,\boldsymbol{x}_t^*)$ and by \eqref{phi}, it follows 
\begin{equation*}
\begin{aligned}
&d\phi^{N,i}(t,\boldsymbol{x}_t^*)\\
=&\Big\{\underset{j=1}{\overset{N}{\sum}}\partial_{x^j}\phi^{N,i}(t,\boldsymbol{x}^*_t)\Big[B^2R^{-1}\big(\phi^{N,j}(t,\boldsymbol{x}^*_t) -\Psi^{N,j}(t,\boldsymbol{x}^*_t)\big)+Bh\Big(k_j^N(t,\nu^{N,1}_{\boldsymbol{x}^*_t},\ldots,\nu^{N,N}_{\boldsymbol{x}^*_t},\lambda^{N,1}_{\boldsymbol{\phi}(t,\boldsymbol{x}_t^*)},\ldots,\lambda^{N,N}_{\boldsymbol{\phi}(t,\boldsymbol{x}_t^*)})\Big)\\
&-Bh\Big(k_j^N(t,\nu^{N,1}_{\boldsymbol{x}^*_t},\ldots,\nu^{N,N}_{\boldsymbol{x}^*_t},\lambda^{N,1}_{\boldsymbol{\Psi}(t,\boldsymbol{x}_t^*)},\ldots,\lambda^{N,N}_{\boldsymbol{\Psi}(t,\boldsymbol{x}_t^*)})\Big)\Big]-(A-B^2R^{-1}P_t)\phi^{N,i}(t,\boldsymbol{x}^*_t)-Ql(\nu^{N,i}_{\boldsymbol{x}^*_t})\\
&+P_tBh\Big(k_i^N(t,\nu^{N,1}_{\boldsymbol{x}^*_t},\ldots,\nu^{N,N}_{\boldsymbol{x}^*_t},\lambda^{N,1}_{\boldsymbol{\phi}(t,\boldsymbol{x}_t^*)},\ldots,\lambda^{N,N}_{\boldsymbol{\phi}(t,\boldsymbol{x}_t^*)})\Big)+r^{N,i}(t,\boldsymbol{x}^*_t)\Big\}dt+\underset{j=1}{\overset{N}{\sum}}\partial_{x^j}\phi^{N,i}(t,\boldsymbol{x}^*_t)\Big(\sigma dW_t^j+\sigma_0dW_t^0\Big).
\end{aligned}
\end{equation*}

Similarly, using \eqref{Psi}, we can derive $\Psi^{N,i}(t,\boldsymbol{x}_t^*)$ satisfies 
\begin{equation*}
\begin{aligned}
&d\Psi^{N,i}(t,\boldsymbol{x}_t^*)\\
=&\Big\{-(A-B^2R^{-1}P_t)\Psi^{N,i}(t,\boldsymbol{x}^*_t)-Ql(\nu^{N,i}_{\boldsymbol{x}^*_t})+P_tBh\Big(k_i^N(t,\nu^{N,1}_{\boldsymbol{x}^*_t},\ldots,\nu^{N,N}_{\boldsymbol{x}^*_t},\lambda^{N,1}_{\boldsymbol{\Psi}(t,\boldsymbol{x}_t^*)},\ldots,\lambda^{N,N}_{\boldsymbol{\Psi}(t,\boldsymbol{x}_t^*)}) \Big)\Big\}dt\\
&+\underset{j=1}{\overset{N}{\sum}}\partial_{x^j}\Psi^{N,i}(t,\boldsymbol{x}^*_t)\Big(\sigma dW_t^j+\sigma_0dW_t^0\Big).
\end{aligned}
\end{equation*}

Then, after simple calculation, we have 
\begin{equation}\label{phipsdifference}
\begin{aligned}
&d\big(\phi^{N,i}(t,\boldsymbol{x}^*_t) -\Psi^{N,i}(t,\boldsymbol{x}^*_t)\big)^2\\=&\Big\{ 2\underset{j=1}{\overset{N}{\sum}}\big(\phi^{N,i}(t,\boldsymbol{x}^*_t) -\Psi^{N,i}(t,\boldsymbol{x}^*_t)\big)\partial_{x^j}\phi^{N,i}(t,\boldsymbol{x}^*_t)\Big[B^2R^{-1}\big(\phi^{N,j}(t,\boldsymbol{x}^*_t) -\Psi^{N,j}(t,\boldsymbol{x}^*_t)\big)\\&+Bh\Big(k_j^N(t,\nu^{N,1}_{\boldsymbol{x}^*_t},\ldots,\nu^{N,N}_{\boldsymbol{x}^*_t},\lambda^{N,1}_{\boldsymbol{\phi}(t,\boldsymbol{x}_t^*)},\ldots,\lambda^{N,N}_{\boldsymbol{\phi}(t,\boldsymbol{x}_t^*)})\Big)-Bh\Big(k_j^N(t,\nu^{N,1}_{\boldsymbol{x}^*_t},\ldots,\nu^{N,N}_{\boldsymbol{x}^*_t},\lambda^{N,1}_{\boldsymbol{\Psi}(t,\boldsymbol{x}_t^*)},\ldots,\lambda^{N,N}_{\boldsymbol{\Psi}(t,\boldsymbol{x}_t^*)})\Big)\Big]\\
&+2\big(\phi^{N,i}(t,\boldsymbol{x}^*_t) -\Psi^{N,i}(t,\boldsymbol{x}^*_t)\big)P_tB\Big [h\Big(k_i^N(t,\nu^{N,1}_{\boldsymbol{x}^*_t},\ldots,\nu^{N,N}_{\boldsymbol{x}^*_t},\lambda^{N,1}_{\boldsymbol{\phi}(t,\boldsymbol{x}_t^*)},\ldots,\lambda^{N,N}_{\boldsymbol{\phi}(t,\boldsymbol{x}_t^*)})\Big)\\
&-h\Big(k_i^N(t,\nu^{N,1}_{\boldsymbol{x}^*_t},\ldots,\nu^{N,N}_{\boldsymbol{x}^*_t},\lambda^{N,1}_{\boldsymbol{\Psi}(t,\boldsymbol{x}_t^*)},\ldots,\lambda^{N,N}_{\boldsymbol{\Psi}(t,\boldsymbol{x}_t^*)}) \Big)\Big]\\
&-2(A-B^2R^{-1}P_t)\big(\phi^{N,i}(t,\boldsymbol{x}^*_t) -\Psi^{N,i}(t,\boldsymbol{x}^*_t)\big)^2+2\big(\phi^{N,i}(t,\boldsymbol{x}^*_t) -\Psi^{N,i}(t,\boldsymbol{x}^*_t)\big)r^{N,i}(t,\boldsymbol{x}^*_t)\Big\}dt\\
&+\Big[\underset{j=1}{\overset{N}{\sum}}|\sigma\big(\partial_{x^j}\phi^{N,i}(t,\boldsymbol{x}^*_t) -\partial_{x^j}\Psi^{N,i}(t,\boldsymbol{x}^*_t)\big)|^2+\Big|\underset{j=1}{\overset{N}{\sum}}\sigma_0\big(\partial_{x^j}\phi^{N,i}(t,\boldsymbol{x}^*_t) -\partial_{x^j}\Psi^{N,i}(t,\boldsymbol{x}^*_t)\big)\Big|^2\Big]dt\\
&+2\underset{j=1}{\overset{N}{\sum}}\big(\phi^{N,i}(t,\boldsymbol{x}^*_t) -\Psi^{N,i}(t,\boldsymbol{x}^*_t)\big)\big(\partial_{x^j}\phi^{N,i}(t,\boldsymbol{x}^*_t) -\partial_{x^j}\Psi^{N,i}(t,\boldsymbol{x}^*_t)\big)\Big(\sigma dW_t^j+\sigma_0dW_t^0\Big).
\end{aligned}
\end{equation}

Recall that $\partial_{x^i}\phi^{N,i}(t,\boldsymbol{x})=0$ and $\partial_{x^j}\phi^{N,i}(t,\boldsymbol{x})$, $j\neq i$ is bounded by $\frac{C}{N}$. Integrating \eqref{phipsdifference} from $t$ to $T$ and then taking the conditional expectation on $\xi$, one can get
{\small\begin{equation}
\begin{aligned}\label{difference:phipsi}
&\mathbb{E}^{\boldsymbol{\xi}}[|\phi^{N,i}(t,\boldsymbol{x}^*_t) -\Psi^{N,i}(t,\boldsymbol{x}^*_t)|^2]\\\leq &\mathbb{E}^{\boldsymbol{\xi}}[|\phi^{N,i}(T,\boldsymbol{x}^*_T) -\Psi^{N,i}(T,\boldsymbol{x}^*_T)|^2]+C\mathbb{E}^{\boldsymbol{\xi}}\Big[\int_t^T|\phi^{N,i}(s,\boldsymbol{x}^*_s) -\Psi^{N,i}(s,\boldsymbol{x}^*_s)|^2ds\Big]\\
&+\frac{C}{N}\underset{j\neq i}{\sum}\mathbb{E}^{\boldsymbol{\xi}}\Big[\int_t^T|\phi^{N,j}(s,\boldsymbol{x}^*_s) -\Psi^{N,j}(s,\boldsymbol{x}^*_s)|^2ds\Big]+C\mathbb{E}^{\boldsymbol{\xi}}\Big[\int_t^T|r^{N,i}(s,\boldsymbol{x}^*_s)|^2ds\Big]\\
&+C\mathbb{E}^{\boldsymbol{\xi}}\Big[\int_t^T|k_i^N(s,\nu^{N,1}_{\boldsymbol{x}^*_s},\ldots,\nu^{N,N}_{\boldsymbol{x}^*_s},\lambda^{N,1}_{\boldsymbol{\phi}(s,\boldsymbol{x}_s^*)},\ldots,\lambda^{N,N}_{\boldsymbol{\phi}(s,\boldsymbol{x}_s^*)})-k_i^N(s,\nu^{N,1}_{\boldsymbol{x}^*_s},\ldots,\nu^{N,N}_{\boldsymbol{x}^*_s},\lambda^{N,1}_{\boldsymbol{\Psi}(s,\boldsymbol{x}_s^*)},\ldots,\lambda^{N,N}_{\boldsymbol{\Psi}(s,\boldsymbol{x}_s^*)}) |^2ds\Big]\\
&+\frac{C}{N}\underset{j\neq i}{\sum}\mathbb{E}^{\boldsymbol{\xi}}\Big[\int_t^T|k_j^N(s,\nu^{N,1}_{\boldsymbol{x}^*_s},\ldots,\nu^{N,N}_{\boldsymbol{x}^*_s},\lambda^{N,1}_{\boldsymbol{\phi}(s,\boldsymbol{x}_s^*)},\ldots,\lambda^{N,N}_{\boldsymbol{\phi}(s,\boldsymbol{x}_s^*)})-k_j^N(s,\nu^{N,1}_{\boldsymbol{x}^*_s},\ldots,\nu^{N,N}_{\boldsymbol{x}^*_s},\lambda^{N,1}_{\boldsymbol{\Psi}(s,\boldsymbol{x}_s^*)},\ldots,\lambda^{N,N}_{\boldsymbol{\Psi}(s,\boldsymbol{x}_s^*)}) |^2ds\Big]. 
\end{aligned}
\end{equation}}

According to \eqref{rni} and \eqref{x*ij}, we have 
\begin{equation}
\begin{aligned}\label{estimate:rni}
\mathbb{E}^{\boldsymbol{\xi}}[|r^{N,i}(t,\boldsymbol{x}^*_t)|^2]\leq& \frac{C}{N^2}\Big(1+\frac{1}{N}\underset{j=1}{\overset{N}\sum}\mathbb{E}^{\boldsymbol{\xi}}[|x^{*,i}_t-x^{*,j}_t|^2]\Big)\\
\leq & \frac{C}{N^2}\Big(1+\frac{1}{N}\underset{j=1}{\overset{N}\sum}|\xi^{i}-\xi^{j}|^2]\Big).
\end{aligned}
\end{equation}

Using \eqref{kiphikpsi},  \eqref{suminequality} and \eqref{x*ij}, we can get
{\small\begin{equation}
\begin{aligned}\label{kiphipsi}
&\mathbb{E}^{\boldsymbol{\xi}}\Big[\int_t^T|k_i^N(s,\nu^{N,1}_{\boldsymbol{x}^*_s},\ldots,\nu^{N,N}_{\boldsymbol{x}^*_s},\lambda^{N,1}_{\boldsymbol{\phi}(s,\boldsymbol{x}^*_s)},\ldots,\lambda^{N,N}_{\boldsymbol{\phi}(s,\boldsymbol{x}^*_s)})-k_i^N(s,\nu^{N,1}_{\boldsymbol{x}^*_s},\ldots,\nu^{N,N}_{\boldsymbol{x}^*_s},\lambda^{N,1}_{\boldsymbol{\Psi}(s,\boldsymbol{x}^*_s)},\ldots,\lambda^{N,N}_{\boldsymbol{\Psi}(s,\boldsymbol{x}^*_s)}) |^2ds\Big]\\
&+\frac{1}{N}\underset{j\neq i}{\sum}\mathbb{E}^{\boldsymbol{\xi}}\Big[\int_t^T|k_j^N(s,\nu^{N,1}_{\boldsymbol{x}^*_s},\ldots,\nu^{N,N}_{\boldsymbol{x}^*_s},\lambda^{N,1}_{\boldsymbol{\phi}(s,\boldsymbol{x}^*_s)},\ldots,\lambda^{N,N}_{\boldsymbol{\phi}(s,\boldsymbol{x}^*_s)})-k_j^N(s,\nu^{N,1}_{\boldsymbol{x}^*_s},\ldots,\nu^{N,N}_{\boldsymbol{x}^*_s},\lambda^{N,1}_{\boldsymbol{\Psi}(s,\boldsymbol{x}^*_s)},\ldots,\lambda^{N,N}_{\boldsymbol{\Psi}(s,\boldsymbol{x}^*_s)}) |^2ds\Big]\\
\leq & \frac{C}{N^2}\Big(1+\frac{1}{N}\underset{j=1}{\overset{N}\sum}|\xi^i-\xi^j|^2\Big)+\frac{C}{N}\underset{j\neq i}{\sum}\mathbb{E}^{\boldsymbol{\xi}}\Big[\int_t^T|\phi^{N,j}(s,\boldsymbol{x}^*_s) -\Psi^{N,j}(s,\boldsymbol{x}^*_s)|^2ds\Big].
\end{aligned}
\end{equation}}
Collecting \eqref{difference:phipsi}-\eqref{kiphipsi} and observing $\phi^{N,i}(T,\boldsymbol{x}^*_T) -\Psi^{N,i}(T,\boldsymbol{x}^*_T)=0$, we deduce 
\begin{equation}\label{eq:phiNiPsiNixixix}
\begin{aligned}
\mathbb{E}^{\boldsymbol{\xi}}[|\phi^{N,i}(t,\boldsymbol{x}^*_t) -\Psi^{N,i}(t,\boldsymbol{x}^*_t)|^2]
\leq &\frac{C}{N^2}\Big(1+\frac{1}{N}\underset{j=1}{\overset{N}\sum}|\xi^{i}-\xi^{j}|^2]\Big)+C\mathbb{E}^{\boldsymbol{\xi}}\Big[\int_t^T|\phi^{N,i}(s,\boldsymbol{x}^*_s) -\Psi^{N,i}(s,\boldsymbol{x}^*_s)|^2ds\Big]\\
 &+\frac{C}{N}\underset{j\neq i}{\sum}\mathbb{E}^{\boldsymbol{\xi}}\Big[\int_t^T|\phi^{N,j}(s,\boldsymbol{x}^*_s) -\Psi^{N,j}(s,\boldsymbol{x}^*_s)|^2ds\Big].
\end{aligned}
\end{equation}
Taking the mean over the index $i\in\{1,\ldots,N \}$ and by the Gronwall's inequality, it follows
\begin{equation}\label{sumphiPsi}
\underset{0\leq t\leq T}{\sup}\Bigg[\frac{1}{N}\underset{i=1}{\overset{N}{\sum}}\mathbb{E}^{\boldsymbol{\xi}}[|\phi^{N,i}(t,\boldsymbol{x}^*_t) -\Psi^{N,i}(t,\boldsymbol{x}^*_t)|^2]
      \Bigg]\leq \frac{C}{N^2}\Big(1+\frac{1}{N^2}\underset{i,j=1}{\overset{N}{\sum}}|\xi^{i}-\xi^{j}|^2\Big)
\end{equation} 
Substituting \eqref{sumphiPsi} into \eqref{eq:phiNiPsiNixixix} and using the Gronwall's inequality again, we finally deduce 
\begin{equation*}
\underset{0\leq t\leq T}{\sup}\mathbb{E}^{\boldsymbol{\xi}}[|\phi^{N,i}(t,\boldsymbol{x}^*_t) -\Psi^{N,i}(t,\boldsymbol{x}^*_t)|^2]
\leq \frac{C}{N^2}\Big(1+\frac{1}{N}\underset{j=1}{\overset{N}\sum}|\xi^{i}-\xi^{j}|^2+\frac{1}{N^2}\underset{i,j=1}{\overset{N}{\sum}}|\xi^{i}-\xi^{j}|^2\Big).
\end{equation*}
Noting that $\phi^{N,i}(0,\boldsymbol{x}^*_0) -\Psi^{N,i}(0,\boldsymbol{x}^*_0)=\phi^{N,i}(0,\boldsymbol{\xi}) -\Psi^{N,i}(0,\boldsymbol{\xi})$, thus the following holds for $1\leq i\leq N$ $\mathbb{P}$-a.s. 
\begin{equation}\label{eq:phiNiPsiNit00}
|\phi^{N,i}(0,\boldsymbol{\xi})-\Psi^{N,i}(0,\boldsymbol{\xi})|\leq\frac{C}{N}\Big(1+\frac{1}{N}\underset{j=1}{\overset{N}\sum}|\xi^{i}-\xi^{j}|^2+\frac{1}{N^2}\underset{i,j=1}{\overset{N}{\sum}}|\xi^{i}-\xi^{j}|^2\Big)^{\frac{1}{2}},
\end{equation}
which is exactly \eqref{est:initial} when $t_0=0$. In fact, \eqref{eq:phiNiPsiNit00} implies that, for any $t\in[0,T]$,
\begin{equation*}
|\phi^{N,i}(t,\boldsymbol{x}^*_t) -\Psi^{N,i}(t,\boldsymbol{x}^*_t)|^2
\leq \frac{C}{N^2}\Big(1+\frac{1}{N}\underset{j=1}{\overset{N}\sum}|x_t^{*,i}-x_t^{*,j}|^2+\frac{1}{N^2}\underset{i,j=1}{\overset{N}{\sum}}|x_t^{*,i}-x_t^{*,j}|^2\Big).
\end{equation*}
Thus,
\begin{equation*}
\mathbb E\Big[\sup_{0\leq t\leq T}|\phi^{N,i}(t,\boldsymbol{x}^*_t) -\Psi^{N,i}(t,\boldsymbol{x}^*_t)|^2\Big]
\leq \frac{C}{N^2}\Big(1+\frac{1}{N}\underset{j=1}{\overset{N}\sum}\mathbb E\big[|\xi^{i}-\xi^{j}|^2\big]+\frac{1}{N^2}\underset{i,j=1}{\overset{N}{\sum}}\mathbb E\big[|\xi^{i}-\xi^{j}|^2\big]\Big),
\end{equation*}
which is \eqref{est:phipsi}.

In the remaining part, we aims to discuss the difference between $x_t^i$ and $x_t^{*,i}$ for any $t\in [0,T]$ and $1\leq i \leq N$. 
From \eqref{convergence:xi} and \eqref{convergence:x*i}, it follows 
\begin{equation}
\begin{aligned}\label{dif:xix*i}
&|x_t^i-x_t^{*,i}|\\\leq& C\int_0^t|x_s^i-x_s^{*,i}|ds+C\int_0^t|\phi^{N,i}(s,\boldsymbol{x}_s) -\Psi^{N,i}(s,\boldsymbol{x}^*_s)|ds\\
&+C\int_0^t|k_i^N(s,\nu^{N,1}_{\boldsymbol{x}_s},\ldots,\nu^{N,N}_{\boldsymbol{x}_s},\lambda^{N,1}_{\boldsymbol{\phi}(s,\boldsymbol{x}_s)},\ldots,\lambda^{N,N}_{\boldsymbol{\phi}(s,\boldsymbol{x}_s)})-k_i^N(s,\nu^{N,1}_{\boldsymbol{x}^*_s},\ldots,\nu^{N,N}_{\boldsymbol{x}^*_s},\lambda^{N,1}_{\boldsymbol{\Psi}(s,\boldsymbol{x}^*_s)},\ldots,\lambda^{N,N}_{\boldsymbol{\Psi}(s,\boldsymbol{x}^*_s)}) |ds.
\end{aligned}
\end{equation}
Firstly, recalling $\phi^{N,i}(t,\boldsymbol{x})=\Phi(t,\nu^{N,i}_{\boldsymbol{x}})$, we have 
\begin{equation}\label{phixpsix*}
\begin{aligned}
|\phi^{N,i}(t,\boldsymbol{x}_t) -\Psi^{N,i}(t,\boldsymbol{x}^*_t)|\leq& |\Phi(t,\nu^{N,i}_{\boldsymbol{x}_t}) -\Phi(t,\nu^{N,i}_{\boldsymbol{x}_t^*})|+|\phi^{N,i}(t,\boldsymbol{x}_t^*)-\Psi^{N,i}(t,\boldsymbol{x}_t^*)|\\
\leq &\frac{1}{N-1}\underset{j\neq i}{\sum}|x_t^j-x_t^{*,j}|+|\phi^{N,i}(t,\boldsymbol{x}_t^*)-\Psi^{N,i}(t,\boldsymbol{x}_t^*)|.
\end{aligned}
\end{equation}
Using \eqref{kinphixpsixprime}, it follows
 \begin{equation}
 \begin{aligned}\label{kiphikipsi*}
 &|k_i^N(t,\nu^{N,1}_{\boldsymbol{x}_t},\ldots,\nu^{N,N}_{\boldsymbol{x}_t},\lambda^{N,1}_{\boldsymbol{\phi}(t,\boldsymbol{x}_t)},\ldots,\lambda^{N,N}_{\boldsymbol{\phi}(t,\boldsymbol{x}_t)})-k_i^N(t,\nu^{N,1}_{\boldsymbol{x}^*_t},\ldots,\nu^{N,N}_{\boldsymbol{x}^*_t},\lambda^{N,1}_{\boldsymbol{\Psi}(t,\boldsymbol{x}^*_t)},\ldots,\lambda^{N,N}_{\boldsymbol{\Psi}(t,\boldsymbol{x}^*_t)}) |\\
 \leq &\frac{C}{N}\Big(1+\frac{1}{N}\underset{j\neq i}{\sum}|x^i_t-x^j_t|+\frac{1}{N}\underset{j\neq i}{\sum}|x^{*,i}_t-x^{*,j}_t|\Big)+\frac{C}{N}\underset{j\neq i}{\sum}|x^j_t-x^{*,j}_t|+\frac{C}{N}\underset{j\neq i}{\sum} |\phi^j(t,\boldsymbol{x}_t) -\Psi^j(t,\boldsymbol{x}^*_t)|.
\end{aligned} 
 \end{equation}
 Plugging \eqref{kiphikipsi*} into \eqref{dif:xix*i} and using \eqref{est:phipsi} and \eqref{sumphiPsi}, we have 
 \begin{equation}
\begin{aligned}\label{dif:xix*iaaa}
&\mathbb E\Big[\sup_{0\leq s\leq t}|x_s^i-x_s^{*,i}|^2\Big]\\
\leq& C\int_0^t\mathbb E\Big[\sup_{0\leq \kappa\leq s}|x_ \kappa^i-x_ \kappa^{*,i}|^2\Big]ds+C\int_0^t\mathbb E\Big[\sup_{0\leq  \kappa\leq s}|\phi^{N,i}( \kappa,\boldsymbol{x}_ \kappa) -\Psi^{N,i}( \kappa,\boldsymbol{x}^*_ \kappa)|^2\Big]ds\\
&+\frac{C}{N^2}\int_0^t\Big[1+\frac{1}{N}\underset{j\neq i}{\sum}
\mathbb E\Big[\sup_{0\leq  \kappa\leq s}|x_ \kappa^i-x_ \kappa^{j}|^2\Big]+\frac{C}{N}\underset{j\neq i}{\sum}
\mathbb E\Big[\sup_{0\leq  \kappa\leq s}|x_ \kappa^{*,i}-x_ \kappa^{*,j}|^2\Big]\Big]ds\\
&+\frac{C}{N}\underset{j\neq i}{\sum}\int_0^t\mathbb E\Big[\sup_{0\leq \kappa\leq s}|x_\kappa^{j}-x_\kappa^{*,j}|^2\Big]\Big]ds+\frac{C}{N}\underset{j\neq i}{\sum}\int_0^t\mathbb E\Big[\sup_{0\leq  \kappa\leq s}|\phi^{N,j}( \kappa,\boldsymbol{x}_ \kappa) -\Psi^{N,j}( \kappa,\boldsymbol{x}^*_ \kappa)|^2\Big]ds\\
\leq&\frac{C}{N^2}\Big(1+\frac{1}{N}\sum_{j=1}^N\mathbb E\big[|\xi^i-\xi^j|^2\big]+\frac{1}{N^2}\sum_{i,j=1}^N\mathbb E\big[|\xi^i-\xi^j|^2\big]\Big)\\&+C\int_0^t\mathbb E\Big[\sup_{0\leq \kappa\leq s}|x_\kappa^i-x_\kappa^{*,i}|^2\Big]ds+\frac{C}{N}\underset{j\neq i}{\sum}\int_0^t\mathbb E\Big[\sup_{0\leq \kappa\leq s}|x_\kappa^j-x_\kappa^{*,j}|^2\Big]ds,
\end{aligned}
\end{equation}
where the last inequality is also due to \eqref{xij} and \eqref{x*ij}.

Taking the mean over the index $i\in\{1,\ldots,N \}$ and by the Gronwall's inequality, it follows
\begin{equation}\label{sumphiPsiaaaa}
\frac{1}{N}\underset{i=1}{\overset{N}{\sum}}\mathbb{E}\Big[\sup_{0\leq s\leq t}|x_s^i-x_s^{*,i}|^2\Big]
      \leq\frac{C}{N^2}\Big(1+\frac{1}{N^2}\sum_{i,j=1}^N\mathbb E\big[|\xi^i-\xi^j|^2\big]\Big).
\end{equation} 
Substituting \eqref{sumphiPsiaaaa} into \eqref{dif:xix*iaaa} and then using the Gronwall's inequality, we finally have \eqref{est:xix*i}.
\end{proof}

\subsection{Propagation of Chaos}

At the end, we consider the propagation of chaos property for the corresponding optimal trajectories. Let $\bar{x}_t^i$ be the solution to the following SDE:
\begin{equation}
\left\{
\begin{aligned}\label{barxi}
d\bar{x}^i_t=&~\Big[(A-B^2R^{-1}P_t)\bar{x}^i_t-B^2R^{-1}\Phi(t,\bar{\nu}_t^i)-Bh\Big(k\big(t,\bar{\nu}_t^i,\Phi(t,\bar{\nu}_t^i)\big)\Big)\Big]dt\\
&+\sigma dW_t^i+\sigma_0dW_t^0,\\
\bar{x}^i_{t_0}=&~\xi^i,
\end{aligned}
\right.
\end{equation}
where $\bar{\nu}_t^i=\mathbb{E}[\bar{x}^i_t|\mathcal{F}_t^{W^0}]$.

\begin{theorem}
Suppose that Assumptions (A), (B'), (C) hold and $\sigma>0$. Then we have
\begin{equation}\label{est:x*ibarxi}
\mathbb{E}\Big[\underset{t_0\leq t\leq T}{\sup}|x_t^{*,i}-\bar{x}_t^i|\Big]\leq \frac{C}{\sqrt N}\quad\text{for all $1\leq i\leq N$.}
\end{equation}
\end{theorem}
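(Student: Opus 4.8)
The plan is to interpolate between $x^{*,i}$ and $\bar x^i$ through the auxiliary process $x^i$ of \eqref{convergence:xi}, and to run a classical propagation of chaos argument that is greatly simplified here by the fact that the diffusion parts $\sigma\,dW^i_t+\sigma_0\,dW^0_t$ of \eqref{convergence:xi} and \eqref{barxi} coincide, so the difference $x^i-\bar x^i$ is a purely finite-variation process. Writing $|x_t^{*,i}-\bar x_t^i|\le |x_t^{*,i}-x_t^i|+|x_t^i-\bar x_t^i|$, the first term is already handled: since the $\xi^i$ are i.i.d.\ in $L^2$ we have $\mathbb E[|\xi^i-\xi^j|^2]\le 4\,\mathbb E[|\xi^1|^2]$, so \eqref{est:xix*i} of Theorem \ref{uvestimate} gives $\mathbb E[\sup_{t_0\le t\le T}|x_t^{*,i}-x_t^i|^2]\le C/N^2$. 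Hence it suffices to establish the second moment bound $\mathbb E[\sup_{t_0\le t\le T}|x_t^i-\bar x_t^i|^2]\le C/N$, after which the Cauchy--Schwarz (Jensen) inequality yields \eqref{est:x*ibarxi}. Well-posedness of \eqref{barxi} itself follows by combining Theorems \ref{NCEwellposed}, \ref{thm:Phi} and \ref{closeloop}.

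First I would record two structural facts about \eqref{barxi}. Taking conditional expectations shows $\bar\nu^i$ solves the forward equation of the NCE system with $\varphi=\Phi(t,\cdot)$; since the $\xi^i$ are i.i.d., $\bar\nu^i_t=\bar\nu_t$ does not depend on $i$. Moreover, conditionally on $\mathcal F^{W^0}_s$ the random variables $\bar x^1_s,\dots,\bar x^N_s$ are i.i.d.\ (each is a measurable functional of $(W^j|_{[t_0,s]},\xi^j)$, with $\bar\nu$ adapted to $\mathbb F^0$), with conditional mean $\bar\nu_s$, and $\mathbb E[\sup_{t_0\le t\le T}|\bar x^i_t|^2]\le C$ by a routine Gronwall estimate (the drift is affine in $\bar x^i$ plus bounded terms, using boundedness of $P$, $\Phi$, $h$). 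Subtracting \eqref{barxi} from \eqref{convergence:xi}, using $\phi^{N,i}(t,\boldsymbol x)=\Phi(t,\nu^{N,i}_{\boldsymbol x})$, replacing $k_i^N(t,\nu^{N,1}_{\boldsymbol x_t},\dots,\lambda^{N,1}_{\boldsymbol\phi(t,\boldsymbol x_t)},\dots)$ by $k(t,\nu^{N,i}_{\boldsymbol x_t},\Phi(t,\nu^{N,i}_{\boldsymbol x_t}))$ up to the error $O\big(\tfrac{1}{N}(1+\tfrac{1}{N}\sum_{j\neq i}|x^i_t-x^j_t|)\big)$ provided by \eqref{kiphik}, and using the uniform Lipschitz continuity of $\Phi$, $k$, $h$, I get the pathwise bound
\[
\sup_{t_0\le s\le t}|x_s^i-\bar x_s^i|\le C\int_{t_0}^t\Big(|x_s^i-\bar x_s^i|+|\nu^{N,i}_{\boldsymbol x_s}-\bar\nu_s|\Big)ds+C\int_{t_0}^t\frac{1}{N}\Big(1+\frac{1}{N}\sum_{j\neq i}|x^i_s-x^j_s|\Big)ds .
\]
Finally $|\nu^{N,i}_{\boldsymbol x_s}-\bar\nu_s|\le \frac{1}{N-1}\sum_{j\neq i}|x^j_s-\bar x^j_s|+\big|\frac{1}{N-1}\sum_{j\neq i}\bar x^j_s-\bar\nu_s\big|$, and the conditional i.i.d.\ structure gives $\mathbb E\big[\big|\frac{1}{N-1}\sum_{j\neq i}\bar x^j_s-\bar\nu_s\big|^2\big]=\frac{1}{(N-1)^2}\sum_{j\neq i}\mathbb E[|\bar x^j_s-\bar\nu_s|^2]\le C/N$.

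To close the estimate I would square, apply Cauchy--Schwarz to the time integrals, take expectations, and set $\rho_t:=\mathbb E[\sup_{t_0\le s\le t}|x_s^i-\bar x_s^i|^2]$, which is independent of $i$ by exchangeability. Using $\mathbb E\big[\big(\frac{1}{N-1}\sum_{j\neq i}\sup_{\kappa\le s}|x^j_\kappa-\bar x^j_\kappa|\big)^2\big]\le \frac{1}{N-1}\sum_{j\neq i}\mathbb E[\sup_{\kappa\le s}|x^j_\kappa-\bar x^j_\kappa|^2]=\rho_s$, the bound $\mathbb E[\sup_{s\le T}|x^i_s-x^j_s|^2]\le C$ from \eqref{xij} (again via i.i.d.\ $\xi$), and the $C/N$ fluctuation estimate, one arrives at $\rho_t\le C\int_{t_0}^t\rho_s\,ds+C/N$, so $\rho_T\le C/N$ by Gronwall. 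Combining with the bound on $x^{*,i}-x^i$ and Jensen's inequality gives \eqref{est:x*ibarxi}, and one may then follow \cite{CardaliaguetDelarueLasryLions2019} for the presentation. The one genuinely non-routine ingredient, everything else reducing to \eqref{kiphik}, Theorem \ref{uvestimate} and standard Gronwall arguments, is the conditional law of large numbers estimate for $\frac{1}{N-1}\sum_{j\neq i}\bar x^j_s$: this is where it is essential that $\bar\nu^i$ does not depend on $i$ and that the $\bar x^j$ are conditionally independent given the common noise, and it is precisely this step that produces the $N^{-1/2}$ rate rather than anything faster.
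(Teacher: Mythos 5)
Your proposal is correct and follows the same skeleton as the paper: the triangle inequality through the intermediate process $x^i$ of \eqref{convergence:xi}, the use of Theorem \ref{uvestimate} (with $\mathbb E[|\xi^i-\xi^j|^2]\leq 4\mathbb E[|\xi^1|^2]$) to dispose of $x^{*,i}-x^i$ at rate $C/N$ in $L^1$, the reduction of the drift comparison to \eqref{kiphik} plus the Lipschitz continuity of $\Phi$, $k$, $h$, the splitting $|\nu^{N,i}_{\boldsymbol{x}_s}-\bar\nu_s|\leq\frac{1}{N-1}\sum_{j\neq i}|x^j_s-\bar x^j_s|+|\frac{1}{N-1}\sum_{j\neq i}\bar x^j_s-\bar\nu_s|$, and a final exchangeability-plus-Gronwall closure. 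The one place where you genuinely diverge is the fluctuation estimate $\mathbb E\big[|\frac{1}{N-1}\sum_{j\neq i}\bar x^j_s-\bar\nu_s|^2\big]\leq C/N$: the paper writes the SDE satisfied by $\nu^{N,i}_{\boldsymbol{\bar x}_t}-\bar\nu^i_t$, applies Burkholder--Davis--Gundy and Gronwall, and traces the rate to the law of large numbers for the initial data $\frac{1}{N-1}\sum_{j\neq i}\xi^j-\mathbb E[\xi^i]$ together with the averaged idiosyncratic martingale $\frac{1}{N-1}\sum_{j\neq i}\sigma W^j$ (this is its estimate \eqref{est:nui}, which is a sup-in-time bound); you instead observe that $\bar\nu^i=\bar\nu$ is independent of $i$ by pathwise uniqueness of the conditioned forward equation and that the $\bar x^j_s$ are conditionally i.i.d.\ given $\mathcal F^{W^0}_s$ with conditional mean $\bar\nu_s$, so the conditional variance computation gives the $C/N$ bound pointwise in $s$, which is all the Gronwall loop needs since the term only enters through a time integral. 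Your route is shorter and makes the probabilistic source of the $N^{-1/2}$ rate more transparent; the paper's dynamic argument is more self-contained (it never invokes conditional independence explicitly) and delivers the stronger $\sup_s$ version of the estimate. Both are valid, and the remaining steps coincide.
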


\begin{proof}
We first estimate the difference $x_t^{i}-\bar{x}_t^i$ which satisfies
\begin{equation}
\begin{aligned}\label{xibarxi}
|x_t^{i}-\bar{x}_t^i|\leq& C\int_{t_0}^t|x_s^{i}-\bar{x}_s^i|ds+C\int_{t_0}^t|\phi^{N,i}(s,\boldsymbol{x}_s)-\Phi(s,\bar{\nu}_s^i)|ds\\
&+C\int_{t_0}^t|k_i^N(s,\nu^{N,1}_{\boldsymbol{x}_s},\ldots,\nu^{N,N}_{\boldsymbol{x}_s},\lambda^{N,1}_{\boldsymbol{\phi}(s,\boldsymbol{x}_s)},\ldots,\lambda^{N,N}_{\boldsymbol{\phi}(s,\boldsymbol{x}_s)})-k\big(s,\bar{\nu}_s^i,\Phi(s,\bar{\nu}_s^i)\big)|ds. 
\end{aligned}
\end{equation}
Note that 
\begin{equation}\label{phipsinu}
|\phi^{N,i}(t,\boldsymbol{x}_t)-\Phi(t,\bar{\nu}_t^i)|=|\Phi(t,\nu^{N,i}_{\boldsymbol{x}_t})-\Phi(t,\bar{\nu}_t^i)|\leq C|\nu^{N,i}_{\boldsymbol{x}_t}-\bar{\nu}_t^i|.\end{equation}
Recalling \eqref{kiphik} and using the Lipschitz continuity of $k$, we have 
\begin{equation}
\begin{aligned}\label{kiknu}
&|k_i^N(t,\nu^{N,1}_{\boldsymbol{x}_t},\ldots,\nu^{N,N}_{\boldsymbol{x}_t},\lambda^{N,1}_{\boldsymbol{\phi}(t,\boldsymbol{x}_t)},\ldots,\lambda^{N,N}_{\boldsymbol{\phi}(t,\boldsymbol{x}_t)})-k\big(t,\bar{\nu}_t^i,\Phi(t,\bar{\nu}_t^i)\big)|\\
\leq &|k_i^N(t,\nu^{N,1}_{\boldsymbol{x}_t},\ldots,\nu^{N,N}_{\boldsymbol{x}_t},\lambda^{N,1}_{\boldsymbol{\phi}(t,\boldsymbol{x}_t)},\ldots,\lambda^{N,N}_{\boldsymbol{\phi}(t,\boldsymbol{x}_t)})-k\big(t,\nu^{N,i}_{\boldsymbol{x}_t},\Phi(t,\nu^{N,i}_{\boldsymbol{x}_t})\big)|\\
&+|k\big(t,\nu^{N,i}_{\boldsymbol{x}_t},\Phi(t,\nu^{N,i}_{\boldsymbol{x}_t})\big)-k\big(t,\bar{\nu}_t^i,\Phi(t,\bar{\nu}_t^i)\big)|\\
\leq & \frac{C}{N}\Big(1+\frac{1}{N}\underset{j=1}{\overset{N}\sum}|x_t^i-x_t^j|\Big)+C|\nu^{N,i}_{\boldsymbol{x}_t}-\bar{\nu}_t^i|. 
\end{aligned}
\end{equation}
Moreover, 
\begin{equation}\label{sumnui}
|\nu^{N,i}_{\boldsymbol{x}_t}-\bar{\nu}_t^i|\leq |\nu^{N,i}_{\boldsymbol{x}_t}-\nu^{N,i}_{\boldsymbol{\bar{x}_t}}|+|\nu^{N,i}_{\boldsymbol{\bar{x}_t}}-\bar{\nu}_t^i|\leq \frac{1}{N-1}\underset{j\neq i}{\sum}|x_t^{j}-\bar{x}_t^j|+|\nu^{N,i}_{\boldsymbol{\bar{x}_t}}-\bar{\nu}_t^i|.
\end{equation}

Now, we are devoted to considering the difference $\nu^{N,i}_{\boldsymbol{\bar{x}_t}}-\bar{\nu}_t^i$. Note that $\nu^{N,i}_{\boldsymbol{\bar{x}_t}}=\frac{1}{N-1}\underset{j\neq i}{\sum}\bar{x}_t^j$ and $\bar{\nu}_t^i=\mathbb{E}[\bar{x}^i_t|\mathcal{F}_t^{W^0}]$. By \eqref{barxi}, we have 
\begin{equation*}
\begin{aligned}
d(\nu^{N,i}_{\boldsymbol{\bar{x}_t}}-\bar{\nu}_t^i)=&\Big\{(A-B^2R^{-1}P_t)(\nu^{N,i}_{\boldsymbol{\bar{x}_t}}-\bar{\nu}_t^i)-B^2R^{-1}\Big(\frac{1}{N-1}\underset{j\neq i}{\sum}\Phi(t,\bar{\nu}_t^j)-\Phi(t,\bar{\nu}_t^i)\Big)\\
&-B\Big[\frac{1}{N-1}\underset{j\neq i}{\sum}h\Big(k\big(t,\bar{\nu}_t^j,\Phi(t,\bar{\nu}_t^i)\big)\Big)-h\Big(k\big(t,\bar{\nu}_t^i,\Phi(t,\bar{\nu}_t^i)\big)\Big)\Big]\Big\}dt+\frac{1}{N-1}\underset{j\neq i}{\sum}\sigma dW_t^j.
\end{aligned}
\end{equation*}
Using the Lipschitz continuity of $\Phi$ and $h$, we obtain 
\begin{equation*}
\begin{aligned}
&|\frac{1}{N-1}\underset{j\neq i}{\sum}\Phi(t,\bar{\nu}_t^j)-\Phi(t,\bar{\nu}_t^i)|+|\frac{1}{N-1}\underset{j\neq i}{\sum}h\Big(k\big(t,\bar{\nu}_t^j,\Phi(t,\bar{\nu}_t^i)\big)\Big)-h\Big(k\big(t,\bar{\nu}_t^i,\Phi(t,\bar{\nu}_t^i)\big)\Big)|\\
\leq & \frac{C}{N-1}\underset{j\neq i}{\sum}|\bar{\nu}_t^j-\bar{\nu}_t^i|\leq\frac{C}{N-1}\underset{j\neq i}{\sum}|\nu^{N,j}_{\boldsymbol{\bar{x}_t}}-\bar{\nu}_t^j|+\frac{C}{N-1}\underset{j\neq i}{\sum}|\nu^{N,j}_{\boldsymbol{\bar{x}_t}}-\nu^{N,i}_{\boldsymbol{\bar{x}_t}}|+C|\nu^{N,i}_{\boldsymbol{\bar{x}_t}}-\bar{\nu}_t^i|\\
\leq & \frac{C}{(N-1)^2}\underset{j\neq i}{\sum}|\bar{x}_t^i-\bar{x}_t^j|+C|\nu^{N,i}_{\boldsymbol{\bar{x}_t}}-\bar{\nu}_t^i|+\frac{C}{N-1}\underset{j\neq i}{\sum}|\nu^{N,j}_{\boldsymbol{\bar{x}_t}}-\bar{\nu}_t^j|.
\end{aligned}
\end{equation*}
By the Burkholder-Davis-Gundy inequality, it follows 
\begin{equation*}
\begin{aligned}
\mathbb{E}[\underset{t_0\leq s \leq t}{\sup}|\nu^{N,i}_{\boldsymbol{\bar{x}_s}}-\bar{\nu}_s^i|^2]\leq & C\mathbb{E}\Big[
|\frac{1}{N-1}\underset{j\neq i}{\sum}\xi^j-\mathbb{E}[\xi^i]|^2\Big]+\frac{C}{(N-1)^3}\underset{j\neq i}{\sum}\mathbb{E}\Big[\int_{t_0}^t|\bar{x}_s^i-\bar{x}_s^j|^2ds\Big]\\&+C\mathbb{E}\Big[\int_{t_0}^t|\nu^{N,i}_{\boldsymbol{\bar{x}_s}}-\bar{\nu}_s^i|^2ds\Big]+\frac{C}{N-1}\underset{j\neq i}{\sum}\mathbb{E}\Big[\int_{t_0}^t|\nu^{N,j}_{\boldsymbol{\bar{x}_s}}-\bar{\nu}_s^j|^2ds\Big]+\frac{C}{N-1}\mathbb{E}\Big[\int_{t_0}^t\sigma^2ds\Big].
\end{aligned}
\end{equation*}
Note that $\xi^i$, $1\leq i\leq N$, are i.i.d random variables. Then 
\begin{equation*}
 \mathbb{E}\Big[|\frac{1}{N-1}\underset{j\neq i}{\sum}\xi^j-\mathbb{E}[\xi^i]|^2\Big]=\frac{1}{(N-1)^2}\underset{j\neq i}{\sum}\mathbb{E}[|\xi^j-\mathbb{E}[\xi^i]|^2]+\frac{1}{(N-1)^2}\underset{j\neq k\neq i}{\sum}\mathbb{E}[(\xi^j-\mathbb{E}[\xi^i])(\xi^k-\mathbb{E}[\xi^i])].
\end{equation*}
Since $\mathbb{E}[|\xi^j-\mathbb{E}[\xi^i]|^2]$ is uniformly bounded, uniformly in $N$,  and
\begin{equation*}
\mathbb{E}[(\xi^j-\mathbb{E}[\xi^i])(\xi^k-\mathbb{E}[\xi^i])]=0,
\end{equation*}
we can deduce that 
\begin{equation*}
 \mathbb{E}[|\frac{1}{N-1}\underset{j\neq i}{\sum}\xi^j-\mathbb{E}[\xi^i]|^2]=O(\frac{1}{N}).
\end{equation*}
Using that $\mathbb{E}[|\bar{x}_s^i-\bar{x}_s^j|^2]$ is uniformly bounded, uniformly in $N$, and by the Gronwall's inequality we derive 
\begin{equation}\label{est:nui}
\mathbb{E}[\underset{t_0\leq s \leq t}{\sup}|\nu^{N,i}_{\boldsymbol{\bar{x}_s}}-\bar{\nu}_s^i|^2]\leq \frac{C}{N}.
\end{equation}
From \eqref{xibarxi}-\eqref{kiknu}, we conclude 
\begin{equation*}
\begin{aligned}
\mathbb{E}[\underset{t_0\leq s \leq t}{\sup}|x_s^{i}-\bar{x}_s^i|^2]\leq &C\mathbb{E}\Big[\int_{t_0}^t|x_s^{i}-\bar{x}_s^i|^2ds\Big]+\frac{C}{N-1}\underset{j\neq i}{\sum}\mathbb{E}\Big[\int_{t_0}^t|x_s^{j}-\bar{x}_s^j|^2ds\Big]\\
&+\frac{C}{N^2}\Big(1+\frac{1}{N}\underset{j=1}{\overset{N}\sum}\mathbb{E}\Big[\int_{t_0}^t|x^i_s-x^j_s|^2ds\Big]\Big)+C\mathbb{E}\Big[\int_{t_0}^t|\nu^{N,i}_{\boldsymbol{\bar{x}_s}}-\bar{\nu}_s^i|^2ds\Big]
\end{aligned}
\end{equation*}
Applying  \eqref{est:nui}, the uniform boundedness of $\mathbb{E}[|x_s^i|^2]$ and the Gronwall's inequality, we get
\begin{equation}\label{est:xibarxi}
\mathbb{E}\Big[\underset{t_0\leq s \leq t}{\sup}|x_s^{i}-\bar{x}_s^i|^2\Big]\leq\frac{C}{N},\quad \text{ for any } t\in[t_0,T].
\end{equation}
By \eqref{est:xix*i} and \eqref{est:xibarxi}, one can get 
\begin{equation*}
\begin{aligned}
\mathbb{E}[\underset{t_0\leq t\leq T}{\sup}|x_t^{*,i}-\bar{x}_t^i|]\leq &\mathbb{E}[\underset{t_0\leq t\leq T}{\sup}|x_t^{*,i}-x_t^i|]+\mathbb{E}[\underset{t_0\leq t\leq T}{\sup}|x_t^{i}-\bar{x}_t^i|]\\
\leq &C(\frac{1}{N}+\frac{1}{\sqrt N}),
\end{aligned}
\end{equation*}
which implies \eqref{est:x*ibarxi}.
\end{proof}

\section{Conclusions}
This paper presents a class of LQ mean field games of controls with common noises. The global well-posedness of the NCE system and the master equation is established without any monotonicity condition. We also solve $N$-player games of controls to obtain the open-loop Nash equilibria. The convergence analysis results from the $N$-player games to the mean field game and the propagation of chaos property are discussed in the end. In future work, we plan to explore the closed-loop Nash equilibria and the well-posedness of the Nash system in the $N$-player games with quadratic growth data. Moreover, we will discuss mean field games in more general settings, such as with time delays, regime-switching, Poisson jumps, etc. 

\begin{acknowledgement}
M. Li thanks the support from the China Scholarship Council (No. 202006220189). C. Mou gratefully acknowledges the support by CityU Start-up Grant 7200684 and Hong Kong RGC Grant ECS 9048215. Z. Wu thanks the support from the National Natural Science Foundation of China (No. 11831010, 61961160732), the Natural Science Foundation of Shandong Province (No. ZR2019ZD42), the Taishan Scholars Climbing Program of Shandong (No. TSPD20210302). C. Zhou is supported by Singapore MOE (Ministry of Education)
AcRF Grants R-146-000-271-112 and R-146-000-284-114, and NSFC Grant No. 11871364. 
\end{acknowledgement}

\end{document}